\newtheorem{thm}{Theorem}[section]
\newtheorem{lemma}[thm]{Lemma}
\newtheorem{defn}[thm]{Definition}
\newtheorem{corollary}[thm]{Corollary}
\newtheorem{proposition}[thm]{Proposition}
\newtheorem{remark}[thm]{Remark}
    \newcommand{\ct}[1]{\langle {#1}\rangle \lower.3ex\hbox{$_{t}$}}
    \newcommand{\lt}[1]{[ {#1}] \lower.3ex\hbox{$_{t}$}}
\newcommand{\R}{{\mathbb{R}}}
\newcommand{\N}{{\mathbb N}}
\newcommand{\C}{{\mathcal C}}
\newcommand{\K}{{\mathcal K}}
\newcommand{\sfe}{{\mathbb S}^{n-1}}
\newcommand{\cl}{{\rm cl}}
\newcommand{\interno}{{\rm int}}
\newcommand{\supp}{{\rm supp}}
\begin{document}
\selectlanguage{english}
\title{Valuations on the space of quasi-concave functions}
\author{Andrea Colesanti, Nico Lombardi}
\address{Dipartimento di Matematica e Informatica ``U. Dini",
Universit\`a degli Studi di Firenze}
\email{colesant@math.unifi.it, nico.lombardi@stud.unifi.it}
%\thanks{Here come thanks!}
\subjclass[2010]{26B25; 52A20} 
\maketitle

\begin{abstract} We characterize the valuations on the space of quasi-concave functions on $\R^N$, that are rigid motion invariant and continuous with 
respect to a suitable topology.  Among them we also provide a specific description of those which are additionally monotone.
\end{abstract}

\section{Introduction}

%If $X$ denotes a space of functions (all defined on a common domain,
%usually $\R^N$ or the unit sphere $\sfe$), 

A {\em valuation} on a space of functions $X$ is an application $\mu\,:\,X\to\R$ such that
\begin{equation}\label{valutazione}
\mu(f\vee g)+\mu(f\wedge g)=\mu(f)+\mu(g)
\end{equation}
for every $f,\,g\in X$ s.t. $f\vee g, f\wedge g\in X$; here ``$\vee$'' and ``$\wedge$'' denote the point-wise maximum and minimum, respectively. The condition
\eqref{valutazione} can be interpreted as a finite additivity property (typically verified by integrals). 
%The goal is usually to characterize all possible valuations on a given space $X$, 
%continuous with respect to a certain topology and invariant under the composition with a specific group of transformations of the domain, like
%translations, rigid motions, etc. 

The study of valuations on spaces of functions stems principally from the theory of valuations on classes of sets, in which the main current concerns 
{\em convex bodies}. We recall that a convex body is simply a compact convex subset of $\R^N$, and the family of convex bodies is 
usually denoted by $\K^N$. An application $\sigma\,:\,\K^N\to\R$ is called a valuation if 
\begin{equation}\label{valutazione 2}
\sigma(K\cup L)+\sigma(K\cap L)=\sigma(K)+\sigma(L)
\end{equation}
for every $K,L\in\K^N$ such that $K\cup L\in\K^N$ (note that the intersection of convex bodies is a convex body). Hence, in passing from 
\eqref{valutazione 2} to \eqref{valutazione} union and intersection are replaced by maximum and minimum respectively. A motivation is that 
the characteristic function of the union (resp. the intersection) of two sets is the maximum (resp. the minimum) of their characteristic
functions. 

The theory of valuations is an important branch of modern convex geometry (the theory of convex bodies). The reader is referred to 
the monograph \cite{Schneider} for an exhaustive description of the state of the art in this area, and for the corresponding 
bibliography. The valuations on $\K^N$, continuous with respect to the Hausdorff metric and rigid motion invariant, 
have been completely classified in a celebrated result by Hadwiger (see \cite{Hadwiger}, \cite{Klain}, \cite{Klain-Rota}). Hadwiger's theorem asserts that 
any valuation $\sigma$ with these properties can be written in the form
\begin{equation}\label{H intro}
\sigma(K)=\sum_{i=0}^N c_i\,V_i(K)\quad\forall\, K\in\K^N,
\end{equation}
where $c_1,\dots,c_N$ are constants and $V_1,\dots,V_N$ denote the {\em intrinsic volumes} (see section \ref{section 2}, for the definition). 
This fact will be of great importance for the results presented here. 

\medskip

Let us give a brief account of the main known results in the area of valuations on function spaces. Wright, in his PhD thesis \cite{Wright} and 
subsequently in collaboration with Baryshnikov and Ghrist \cite{BGW}, characterized rigid motion invariant and continuous valuations on 
the class of {\em definable functions} 
(we refer to the quoted papers for the definition). Their result is very similar to Hadwiger's theorem; roughly speaking it asserts that 
every valuation is the linear combination of integrals of intrinsic volumes of level sets. This type of valuations will be crucial in our results as well.

Rigid motion invariant and continuous valuations on $L^p(\R^N)$ and on $L^p(\sfe)$ ($1\le p<\infty$) 
have been studied and classified by Tsang in \cite{Tsang-2010}. 
Basically, Tsang proved that every valuation $\mu$ with these properties is of the type
\begin{equation}\label{intro Tsang}
\mu(f)=\int\phi(f)dx
\end{equation}
(here the integral is performed on $\R^N$ or $\sfe$) for some function $\phi$ defined on $\R$ verifying suitable growth conditions. Subsequently, the results
of Tsang have been extended to Orlicz spaces by Kone in \cite{Kone}. Also, the special case $p=\infty$ was studied by Cavallina in \cite{Cavallina}.

Valuations on the space of functions of bounded variations and on Sobolev spaces have been recently studied by Wang and Ma respectively, in \cite{Wang},
\cite{Wang-thesis}, \cite{Ma} and \cite{Ma-Thesis}. 

In \cite{Colesanti-Cavallina} the authors consider rigid motion invariant and continuous valuations (with respect to a certain topology that will be recalled later on)
on the space of convex functions, and found some partial characterization results under the assumption of monotonicity and homogeneity.

Note that the results that we have mentioned so far concern {\em real-valued} valuations, but there are also studies
regarding other types of valuations (e.g. matrix-valued valuations, or Minkowski and Blaschke valuations, etc.) that are interlaced with the results mentioned perviously.
A strong impulse to these studies have been given by Ludwig in the works \cite{Ludwig-2011}, \cite{Ludwig-2012}, \cite{Ludwig-2013}; the reader is referred also to
\cite{Tsang-2011} and \cite{Ober}.

\medskip

Here we consider the space $\C^N$ of {\em quasi-concave} functions of $N$ real variables. A function $f\,:\,\R^N\to\R$ is quasi-concave 
if it is non-negative and for every $t>0$ the level set
$$
L_t(f)=\{x\in\R^N\,:\,f(x)\ge t\}
$$
is (either empty or) a compact convex set. $\C^N$ includes log-concave functions and characteristic functions of convex bodies as significant examples. 

We consider valuations $\mu\,:\,\C^N\to\R$ which are rigid motion invariant, i.e.
$$
\mu(f)=\mu(f\circ T)
$$
for every $f\in\C^N$ and for every rigid motion $T$ of $\R^N$. We also impose a continuity condition on $\mu$: if $f_i$, $i\in\N$, is a {\em monotone} sequence in 
$\C^N$, converging to $f\in\C^N$ point-wise in $\R^N$, then we must have
$$
\lim_{i\to\infty}\mu(f_i)=\mu(f).
$$
In section \ref{discussion on topology} we provide some motivation for this definition, comparing this notion of continuity with other possible choices.

There is a simple way to construct valuations on $\C^N$. To start with, note that if $f,g\in\C^N$ and $t>0$
\begin{equation}\label{intro1}
L_t(f\vee g)=L_t(f)\cup L_t(g),\quad
L_t(f\wedge g)=L_t(f)\cap L_t(g).
\end{equation}
Let $\psi$ be a function defined on $(0,\infty)$ and fix $t_0>0$. Define, for every 
$f\in\C^N$, 
$$
\mu_0(f)=V_N(L_{t_{0}}(f))\psi(t_0).
$$
Using \eqref{intro1} and the additivity of volume we easily deduce that $\mu_0$ is a rigid motion invariant valuation. 
More generally, we can overlap valuations of this type at
various levels $t$, and we can further replace $V_N$ by any intrinsic volume $V_k$:
\begin{equation}\label{intro2}
\mu(f)=\int_{(0,\infty)}V_k(L_t(f))\psi(t)\,dt
=\int_{(0,\infty)}V_k(L_t(f))\,d\nu(t), \quad f\in\C^N,
\end{equation}
where $\nu$ is the measure with density $\psi$.
This is now a rather ample class of valuations; as we will see, basically every {\em monotone} valuation on $\C^N$ can be written in this form. 
To proceed, we observe that the function 
$$
t\,\rightarrow\,V_k(L_t(f))
$$
is decreasing. In particular it admits a distributional derivative which is a non-positive measure. For ease of notation
we write this measure in the form $-S_k(f;\cdot)$ where now $S_k(f;\cdot)$ is a (non-negative) Radon measure on $(0,\infty)$.
Then, integrating by parts in \eqref{intro2} (boundary terms can be neglected, as it will be clear in the sequel) we obtain:
\begin{equation}\label{intro3}
\mu(f)=\int_{(0,\infty)}\phi(t)\,dS_k(f;t)
\end{equation}
where $\phi$ is a primitive of $\psi$. Our first result is the fact that functionals of this type exhaust, by linear combinations, all possible 
rigid motion invariant and continuous valuations on $\C^N$. 

\begin{thm}\label{characterization} A map $\mu\,:\,\C^N\to\R$ is an invariant and continuous valuation on $\C^N$  if and only if there exist $(N+1)$ continuous 
functions $\phi_k$, $k=0,\dots,N$ defined on $[0,\infty)$,
\begin{equation}\label{intro0.1}
\mu(f)=\sum_{k=0}^N\int_{[0,\infty)}\phi_k(t)dS_k(f;t)\quad\forall\, f\in\C^N.
\end{equation}
Moreover, there exists $\delta>0$ such that $\phi_k\equiv0$ in $[0,\delta]$ for every $k=1,\dots,N$. 
\end{thm}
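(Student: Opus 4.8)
The plan is to reduce the classification on $\C^N$ to Hadwiger's theorem applied level-set by level-set. One direction---that each functional in \eqref{intro0.1} is an invariant continuous valuation---follows from \eqref{intro1}, the valuation property of the intrinsic volumes, Fubini, and a monotone convergence argument for the continuity; the support condition on the $\phi_k$ near $0$ will be forced, not proved, in this direction. So the substance is the forward implication: given an arbitrary invariant continuous valuation $\mu$ on $\C^N$, produce the functions $\phi_k$.

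First I would fix $t>0$ and restrict $\mu$ to the subfamily of functions of the form $t\,\chi_K$ with $K\in\K^N$ (and the zero function). Since $L_s(t\chi_K)$ equals $K$ for $0<s\le t$ and is empty for $s>t$, the identities \eqref{intro1} show that $K\mapsto \mu(t\chi_K)$ is a valuation on $\K^N$ in the sense of \eqref{valutazione 2}; invariance of $\mu$ gives rigid motion invariance, and the continuity hypothesis on monotone sequences yields continuity with respect to the Hausdorff metric (a Hausdorff-convergent monotone sequence $K_i$ gives a monotone pointwise-convergent sequence $t\chi_{K_i}$, modulo care on the boundary). Hadwiger's theorem \eqref{H intro} then gives constants $c_k(t)$, $k=0,\dots,N$, with $\mu(t\chi_K)=\sum_k c_k(t)V_k(K)$; taking $K$ to be balls of varying radius shows each $c_k$ is a genuine function of $t$, and the continuity/monotonicity hypotheses should force each $c_k$ to be continuous and to vanish on an interval $[0,\delta]$ for $k\ge 1$ (the $k=0$ term, $V_0\equiv 1$, behaves differently and survives as $\phi_0$). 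Setting $\phi_k$ to be an appropriate primitive of $c_k$, rewritten via the layer-cake/integration-by-parts passage from \eqref{intro2} to \eqref{intro3}, matches the claimed form on simple functions $t\chi_K$.

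The main work is then the approximation step: every $f\in\C^N$ is a pointwise monotone limit of finite suprema $f_m=\bigvee_{j} t_j\chi_{L_{t_j}(f)}$ of such simple functions (a "horizontal" discretization of $f$ by its level sets). Here I would use the valuation property repeatedly to expand $\mu(f_m)$ as an inclusion--exclusion sum over the sets $L_{t_j}(f)$; because these level sets are nested, $L_{t_i}(f)\cap L_{t_j}(f)=L_{\max(t_i,t_j)}(f)$, so the inclusion--exclusion sum telescopes and, using the formula already established on simple functions, collapses to a Riemann-type sum for $\sum_k\int \phi_k\,dS_k(f;\cdot)$. Passing to the limit $m\to\infty$ via the continuity hypothesis on the monotone sequence $f_m\uparrow f$, and on the other side recognizing the Riemann sums as converging to the integral against the measures $S_k(f;\cdot)$ (this is where the decreasing function $t\mapsto V_k(L_t(f))$ and its associated measure $-S_k(f;\cdot)$ enter), gives \eqref{intro0.1} in general.

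The step I expect to be the real obstacle is the rigorous handling of the behaviour near $t=0$ and the regularity of the $c_k(t)$: extracting from the abstract continuity hypothesis (only monotone sequences) both the continuity of each $c_k$ on $(0,\infty)$ and the crucial fact that $c_k$, hence $\phi_k$, must vanish identically on some $[0,\delta]$ for $k=1,\dots,N$. The point is that as $t\to 0^+$ the simple functions $t\chi_K$ tend monotonically to the zero function (for which $\mu=0$ should hold, or at least $\mu$ is controlled), which constrains $\sum_k c_k(t)V_k(K)\to 0$; letting $K$ dilate forces the top-order coefficients to die first, and a careful iteration peels off $c_N,c_{N-1},\dots,c_1$ in turn, while $c_0$ need only be continuous up to $0$. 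Controlling the interchange of limits and integrals uniformly in $f$---so that the Riemann sums converge and the $\delta$ is uniform---will require the monotone convergence/continuity axiom to be invoked with some care, together with local boundedness estimates on $V_k(L_t(f))$.
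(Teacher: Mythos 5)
The decisive gap is your treatment of the claim that $\phi_k\equiv0$ on some $[0,\delta]$ for every $k\ge1$. The mechanism you propose --- let $t\to0^+$ so that $tI_K\searrow\underline{0}$, conclude $\sum_k c_k(t)V_k(K)\to0$, and dilate $K$ to peel off coefficients --- only yields $\lim_{t\to0^+}c_k(t)=0$ for each $k$ (including $k=0$, so it does not even separate the exceptional case $\phi_0$ from the others). That is a pointwise statement at the origin, compatible with $c_k(t)=t$, and strictly weaker than identical vanishing on an interval. The correct reason is a divergence argument: if, say, $\phi_{k,+}$ vanishes on no right neighborhood of $0$, one constructs a radial function $f(x)=w(\|x\|)\in\C^N$ whose level sets are balls growing so fast as $t\to0^+$ that $\int\phi_{k,+}\,dS_k(f;\cdot)=+\infty$ while the integral of $\phi_{k,-}$ stays finite, contradicting the finiteness of $\mu(f)=\lim_i\mu(f\wedge M(f)I_{B_i})$. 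This is the Lemma of Section \ref{sec. Continuous and integral valuations} and Step 2 of the proof of Theorem \ref{characterization simple}; it is the technical heart of the theorem, and nothing in your sketch produces it.

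A second, structural, issue: you invoke the full Hadwiger theorem for $\sigma_t(K)=\mu(tI_K)$, but Theorem \ref{t2} requires Hausdorff continuity, whereas the hypothesis on $\mu$ controls only monotone sequences --- and for an increasing sequence $K_i\nearrow K$ the pointwise limit of $tI_{K_i}$ is $tI_{\bigcup_iK_i}$, which need not lie in $\C^N$, so in effect you only control decreasing sequences. Only the volume theorem is available under this weakened continuity (the Remark after Corollary \ref{c1}). This is precisely why the paper takes a different route: it classifies \emph{simple} valuations first (where only the volume theorem is needed) and then proves Theorem \ref{characterization} by induction on $N$ at the level of $\C^N$, restricting $\mu$ to functions supported in a hyperplane and subtracting so that the remainder is simple. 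Your per-level Hadwiger approach is essentially the one the paper uses in the monotone case (Section \ref{section 8}), where Theorem \ref{t3} dispenses with continuity altogether; to make it work here you would need a version of Theorem \ref{t2} valid under decreasing-sequence continuity (plausible via Klain's induction run on $\K^N$, but not supplied). Your telescoping identity on simple functions and the monotone approximation step are sound and match Lemma \ref{simple on simple} and Step 1 of the paper's argument.
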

The condition that each $\phi_k$, except for $\phi_0$, vanishes in a right neighborhood of the origin guarantees that the integral
in \eqref{intro3} is finite for every $f\in\C^N$ (in fact, it is equivalent to this fact).
As in the case of Hadwiger theorem, the proof of this result is based on a preliminary step in which valuations that are additionally {\em simple} are
classified. A valuation $\mu$ on $\C^N$ is called simple if 
$$
\mbox{$f=0$ a.e. in $\R^N$}\quad
\Rightarrow\quad\mu(f)=0.
$$
Note that for $f\in\C^N$, being zero a.e. is equivalent to say that the dimension of the support of $f$ (which is a convex set) is strictly smaller than $N$.
The following result is in a sense analogous to the so-called {\em volume theorem} for convex bodies.

\begin{thm}\label{characterization simple} A map $\mu\,:\,\C^N\to\R$ is an invariant, continuous and simple valuation on $\C^N$ if and only if there exists a continuous 
function $\phi$ defined on $[0,\infty)$, with $\phi\equiv0$ in $[0,\delta]$ for some $\delta>0$, such that
\begin{equation}\label{intro0.2}
\mu(f)=\int_{\R^n} \phi(f(x))dx\quad\forall\, f\in\C^N,
\end{equation}
or, equivalently,
$$
\mu(f)=\int_{[0,\infty)}\phi(t)dS_N(f;t).
$$
\end{thm}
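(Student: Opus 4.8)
\emph{The ``if'' direction.} Suppose $\mu(f)=\int_{\R^N}\phi(f)\,dx$ with $\phi$ continuous on $[0,\infty)$ and $\phi\equiv 0$ on $[0,\delta]$ (so in particular $\phi(0)=0$). For every $x$ the unordered pair $\{(f\vee g)(x),(f\wedge g)(x)\}$ coincides with $\{f(x),g(x)\}$, hence $\phi((f\vee g)(x))+\phi((f\wedge g)(x))=\phi(f(x))+\phi(g(x))$ pointwise, and integrating yields \eqref{valutazione}. Invariance is just the rigid-motion invariance of Lebesgue measure, and simplicity follows from $\phi(0)=0$. Each $f\in\C^N$ is bounded (otherwise all $L_t(f)$ would be nonempty, and a cluster point of points $x_t\in L_t(f)\subseteq L_1(f)$ would lie in every $L_s(f)$, forcing $f=+\infty$ there), and $\phi(f(\cdot))$ is supported in the compact set $L_\delta(f)$; so $\mu(f)$ is finite, and continuity along monotone sequences follows from dominated convergence, the dominating function being a constant multiple of $\mathbf 1_{L_\delta(f_1)}$ for decreasing sequences and of $\mathbf 1_{L_\delta(f)}$ for increasing ones. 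The equivalence with $\int_{[0,\infty)}\phi\,dS_N(f;\cdot)$ is the layer-cake identity, since $S_N(f;\cdot)$ equals the push-forward under $f$ of the restriction of Lebesgue measure to $\{f>0\}$.

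\emph{The ``only if'' direction: reduction to convex bodies.} Given an invariant, continuous, simple $\mu$, fix $t>0$ and set $g_t(K):=\mu(t\mathbf 1_K)$ for $K\in\K^N$, with $g_t(\emptyset)=\mu(0)=0$. Since $t\mathbf 1_K\vee t\mathbf 1_L=t\mathbf 1_{K\cup L}$ and $t\mathbf 1_K\wedge t\mathbf 1_L=t\mathbf 1_{K\cap L}$ belong to $\C^N$ whenever $K\cup L\in\K^N$, $g_t$ is a valuation on $\K^N$; it is rigid-motion invariant because $\mu$ is, and simple because $\dim K<N$ forces $t\mathbf 1_K=0$ a.e. If $K_i\downarrow K$ in the Hausdorff metric then $t\mathbf 1_{K_i}\downarrow t\mathbf 1_K$ monotonically and pointwise, so $g_t(K_i)\to g_t(K)$; thus $g_t$ is determined on $\K^N$ by its values on polytopes, approached by decreasing circumscribed polytopes. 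I would then invoke the volume theorem (cf.\ \cite{Schneider}, \cite{Klain-Rota}): the combinatorial heart of its proof --- the computation on boxes and its extension to all polytopes, carried out by dissection, simplicity and rigid motions --- uses no continuity, while continuity enters only through approximation (of general boxes by rational ones, and of convex bodies by polytopes), which can be arranged through decreasing, hence monotone, sequences; so the weaker continuity available here suffices, giving $g_t(K)=\phi(t)\,V_N(K)$ with $\phi(t):=\mu(t\mathbf 1_{[0,1]^N})$. Set $\phi(0):=0$.

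\emph{Properties of $\phi$ and passage to general $f$.} Applying the monotone continuity of $\mu$ to $t_i\mathbf 1_{[0,1]^N}$ shows $\phi$ is continuous on $(0,\infty)$. For a step function $h$ with levels $t_1>\dots>t_m>0$ over nested bodies $K_1\subseteq\dots\subseteq K_m$, the identities $h=h'\vee(t_m\mathbf 1_{K_m})$ and $h\wedge(t_m\mathbf 1_{K_m})=t_m\mathbf 1_{K_{m-1}}$ (where $h'$ carries the first $m-1$ levels) give, by induction on $m$, $\mu(h)=\sum_{j=1}^{m}\phi(t_j)\,[V_N(K_j)-V_N(K_{j-1})]$ with $K_0=\emptyset$. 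To prove $\phi\equiv 0$ near $0$, assume the contrary: there are $\epsilon_j\downarrow 0$ with $\phi(\epsilon_j)$ nonzero and of one fixed sign. Pick radii $R_1\le R_2\le\dots\to\infty$ with $|\phi(\epsilon_j)|\,[V_N(\overline{B}_{R_j})-V_N(\overline{B}_{R_{j-1}})]\ge 1$ ($R_0=0$), and let $f\in\C^N$ be the function whose level sets are $L_{\epsilon_j}(f)=\overline{B}_{R_j}$. Then $f\mathbf 1_{\overline{B}_{R_m}}\uparrow f$ monotonically, so $\mu(f\mathbf 1_{\overline{B}_{R_m}})\to\mu(f)\in\R$; but the step-function formula gives $\big|\mu(f\mathbf 1_{\overline{B}_{R_m}})\big|=\big|\sum_{j=1}^{m}\phi(\epsilon_j)\,[V_N(\overline{B}_{R_j})-V_N(\overline{B}_{R_{j-1}})]\big|\ge m$, a contradiction. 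Hence $\phi\equiv 0$ on some $[0,\delta]$, and $\phi$ is continuous on all of $[0,\infty)$. Finally, for arbitrary $f\in\C^N$ I would write $f\mathbf 1_{\overline{B}_n}\uparrow f$ with each term compactly supported, approximate $f\mathbf 1_{\overline{B}_n}$ from below by dyadic step functions, and pass to the limit in the formula above --- a Riemann--Stieltjes sum converging to $\int_{(0,\infty)}\phi\,dS_N(f\mathbf 1_{\overline{B}_n};\cdot)$ because $\phi$ is uniformly continuous on $[0,\|f\|_\infty+1]$ and $S_N(f\mathbf 1_{\overline{B}_n};(0,\infty))\le V_N(\overline{B}_n)$ --- and then let $n\to\infty$, using that $\phi$ vanishes on $[0,\delta]$ so that only the uniformly bounded masses $S_N(\,\cdot\,;[\delta,\infty))\le V_N(L_\delta(f))$ are relevant. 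This gives $\mu(f)=\int_{(0,\infty)}\phi\,dS_N(f;\cdot)=\int_{\R^N}\phi(f)\,dx$.

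\emph{Main obstacle.} The delicate point is the reduction to convex bodies: one must check that the classical volume theorem is not weakened by having only monotone (rather than full Hausdorff) continuity, i.e.\ that every appeal to continuity in its proof can be realized through decreasing sequences of bodies, which the hypothesis on $\mu$ controls via $t\mathbf 1_{K_i}\downarrow t\mathbf 1_K$. The secondary difficulty is the bookkeeping near $t=0$ required to keep $\mu$ real-valued and to prevent the approximating measures from losing mass at the origin; this is precisely what the spike construction above resolves, after which $\phi\equiv 0$ near $0$ makes all the limiting arguments uniform.
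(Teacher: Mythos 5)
Your proposal is correct and its overall architecture coincides with the paper's: reduce to $\K^N$ via $K\mapsto\mu(tI_K)$ and invoke the volume theorem under the weakened (decreasing-sequence) continuity, derive the formula for step functions by induction on the number of levels, rule out non-vanishing of $\phi$ near $0$ by a contradiction, and extend to all of $\C^N$ by monotone approximation with dyadic step functions followed by truncation. The one step where you take a genuinely different route is the proof that $\phi\equiv0$ on some $[0,\delta]$, and your version is simpler. The paper splits $\phi$ into $\phi_+$ and $\phi_-$, builds a radial $f$ whose measure $S_N(f;\cdot)$ is absolutely continuous with a carefully designed density $c\,g(t)\,dt$ (via the auxiliary functions $\gamma$, $G$, $\rho$), and derives a contradiction from $\int\phi_+\,dS_N(f;\cdot)=\infty$ while $\int\phi_-\,dS_N(f;\cdot)<\infty$. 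You instead pick $\epsilon_j\downarrow0$ with $\phi(\epsilon_j)$ of one fixed sign, choose radii so that each level contributes at least $1$ to the step-function formula, and observe that the truncations $fI_{B_{R_m}}\nearrow f$ force $\mu(fI_{B_{R_m}})$ to converge while $|\mu(fI_{B_{R_m}})|\ge m$. This avoids the positive/negative decomposition entirely and works with a purely atomic measure $S_N(f;\cdot)$; it buys a shorter argument at no cost in generality. Two cosmetic remarks: in the inductive step the identity should read $h'\wedge(t_mI_{K_m})=t_mI_{K_{m-1}}$ (with $h'$, not $h$, on the left), and the radii must be chosen strictly increasing for the increments $V_N(B_{R_j})-V_N(B_{R_{j-1}})$ to be positive; neither affects the argument. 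Your ``if'' direction (pointwise identity plus dominated convergence) and your Riemann--Stieltjes justification of the limit over dyadic partitions are direct substitutes for the paper's appeal to Proposition \ref{proposition integral valuations 1}, Proposition \ref{label} and weak convergence of the measures $S_N(f_i;\cdot)$, and are equally valid.
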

Here the equivalence of the two formulas follows from the layer cake principle. The representation formula of Theorem \ref{characterization} becomes more 
legible in the case of monotone valuations. Here, each term of the sum is clearly a weighted mean of the intrinsic volumes of the level sets of $f$. 

\begin{thm}\label{characterization monotone} A map $\mu$ is an invariant, continuous and monotone increasing valuation on $\C^N$ if and only if there exist
$(N+1)$ Radon measures on $[0,\infty)$, $\nu_k$, $k=0,\dots,N$, such that
\begin{equation}\label{intro0.3}
\mu(f)=\sum_{k=0}^N
\int_{[0,\infty)}V_k(L_t(f))\,d\nu_k(t),\quad\forall\, f\in\C^N.
\end{equation}
Moreover, each $\nu_k$ is non-atomic and, for $k\ge1$, there exists $\delta>0$ such that the support of $\nu_k$ is contained in $[\delta,\infty)$. 
\end{thm}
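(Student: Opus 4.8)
The plan is to derive the monotone case as a refinement of Theorem \ref{characterization}, using monotonicity to pin down the sign and regularity of the functions $\phi_k$ and then to integrate by parts back from the representation \eqref{intro3} to the representation \eqref{intro2}. First I would note that one direction is essentially free: if $\mu$ has the form \eqref{intro0.3} with non-negative Radon measures $\nu_k$, then it is a valuation by \eqref{intro1} and the additivity of each $V_k$, it is rigid motion invariant since the $V_k$ are, it is monotone increasing because $f\le g$ implies $L_t(f)\sub L_t(g)$ and $V_k$ is monotone under inclusion, and it is continuous with respect to monotone pointwise convergence by a monotone convergence argument on the level sets (for an increasing sequence $f_i\uparrow f$ one has $L_t(f_i)\uparrow L_t(f)$ for a.e.\ $t$, up to the boundary of the limiting level set, and $V_k$ is continuous along such monotone sequences of convex bodies). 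The support condition on $\nu_k$ for $k\ge 1$ and the non-atomicity are exactly what make the integrals finite and the functional continuous, so they must be imposed; I would record this as the ``if'' part.

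For the ``only if'' part, I would start from Theorem \ref{characterization}, which gives continuous functions $\phi_k$ on $[0,\infty)$, vanishing on $[0,\delta]$ for $k\ge 1$, with
\[
\mu(f)=\sum_{k=0}^N\int_{[0,\infty)}\phi_k(t)\,dS_k(f;t).
\]
The key step is to test $\mu$ on a well-chosen family of quasi-concave functions that isolates each term and converts monotonicity of $\mu$ into monotonicity (in the appropriate sense) of $\phi_k$. Concretely, for a fixed convex body $K$ and a non-increasing step or ``staircase'' function built from dilates of $K$ at heights $t$, the measure $S_k(f;\cdot)$ becomes a sum of point masses weighted by differences of $V_k$ of the dilates; comparing two such functions ordered by $\le$ forces each $\phi_k$ to be non-decreasing on its support. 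Once each $\phi_k$ is continuous and non-decreasing (and constant, equal to $\phi_k(\delta)$ which we may take to be the value determining a measure supported in $[\delta,\infty)$, near $0$), it is the distribution function of a non-negative Radon measure $\nu_k$ on $[0,\infty)$, i.e.\ $\phi_k(t)=\nu_k([0,t])+c_k$ up to an additive constant; the constant is absorbed because $S_k(f;\cdot)$ has finite total mass $V_k(L_{0^+}(f))$ and a constant contributes $c_k V_k(\supp f)$, which can be rewritten as the $\nu_k$-term by adjusting an atom, or shown to vanish by testing on functions with $V_k(\supp f)=0$. Then Fubini/integration by parts converts
\[
\int_{[0,\infty)}\phi_k(t)\,dS_k(f;t)=\int_{[0,\infty)}\Big(\int_{[0,t]}d\nu_k(s)\Big)\,dS_k(f;t)
=\int_{[0,\infty)}\Big(\int_{[s,\infty)}dS_k(f;t)\Big)\,d\nu_k(s)
=\int_{[0,\infty)}V_k(L_s(f))\,d\nu_k(s),
\]
using that $S_k(f;\cdot)$ is (minus) the distributional derivative of $t\mapsto V_k(L_t(f))$ and that the boundary terms vanish. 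Finally, non-atomicity of $\nu_k$ follows from continuity of $\mu$: an atom of $\nu_k$ at a level $t_0$ would make $\mu$ jump along a monotone sequence $f_i$ whose level sets cross $t_0$ in a controlled way, contradicting the continuity hypothesis; and the support condition for $k\ge 1$ is inherited from $\phi_k\equiv 0$ on $[0,\delta]$.

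The main obstacle I expect is the careful handling of the ``boundary terms'' and the behavior at $t\to 0^+$: the measures $S_k(f;\cdot)$ need not be finite near $0$ for $k\ge 1$ in general (this is precisely why the vanishing of $\phi_k$ near the origin is needed), so the integration-by-parts identity above has to be justified on the support of $\nu_k$, which lies in $[\delta,\infty)$, and one must check that the additive-constant ambiguity in writing $\phi_k$ as a distribution function does not reintroduce a divergent or non-invariant term. Equally delicate is the extraction step: producing, for each fixed $k$, enough test functions $f$ in $\C^N$ so that the linear functional $\phi_k\mapsto\int\phi_k\,dS_k(f;\cdot)$ sees $\phi_k$ on a dense set of configurations while the other $S_j(f;\cdot)$, $j\ne k$, are controlled — here the multilinearity/Steiner-formula structure of the intrinsic volumes on dilates $rK$ is the right tool, since $V_k(rK)=r^k V_k(K)$ lets one solve a Vandermonde-type system in the dilation parameters to separate the indices $k=0,\dots,N$.
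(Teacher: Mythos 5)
Your proof is essentially correct, but it takes a genuinely different route from the paper. The paper does not pass through Theorem \ref{characterization} at all: it restricts $\mu$ to the functions $tI_K$, applies the \emph{monotone} version of Hadwiger's theorem (Theorem \ref{t3}) to get coefficients $\psi_k(t)$, proves continuity and monotonicity of each $\psi_k$ by exactly the dilation/homogeneity trick on $k$-dimensional balls that you describe at the end, sets $\nu_k$ equal to the distributional derivative of $\psi_k$, verifies the representation on staircase (simple) functions by the same inclusion--exclusion induction used for Theorem \ref{characterization simple}, and then extends to all of $\C^N$ by monotone approximation and Beppo Levi; the support condition for $k\ge1$ is obtained a posteriori from Proposition \ref{p4}. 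You instead start from the already-proved representation $\mu(f)=\sum_k\int\phi_k\,dS_k(f;\cdot)$, use the same test functions $tI_{r\Delta_k}$ to force each $\phi_k$ to be non-decreasing, and convert by Tonelli, using that $S_k(f;[s,\infty))=V_k(L_s(f))$ (which holds because $t\mapsto V_k(L_t(f))$ is left-continuous, as decreasing sequences of convex bodies converge to their intersection). Your route is shorter given Theorem \ref{characterization} and makes rigorous, without any smoothness assumption on $\phi_k$, the integration-by-parts link between the two representations that the paper only sketches in section \ref{connection}; the paper's route is self-contained and produces the $\nu_k$ directly. Two small remarks: the additive-constant issue you worry about does not arise, since $\phi_k(0)=0$ so $\phi_k(t)=\nu_k([0,t])$ exactly; and non-atomicity of $\nu_k$ needs no separate continuity argument in your setup, being automatic from the continuity of $\phi_k$.
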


As we already mentioned, and it will be  explained in details in section \ref{connection}, the passage
$$
\int_{[0,\infty)}\phi_k(t)dS_k(f;t)\;
\longrightarrow\;
\int_{[0,\infty)}V_k(L_t(f))\,d\nu_k(t)
$$
is provided merely by an integration by parts, when this is permitted by the regularity of the function $\phi_k$. 

\medskip

The paper is organized as follows. In the next section we provide some notion from convex geometry. Section 3 is devoted to the basic properties 
quasi-convex functions, while in section 4 we define various types of valuations on the space $\C^N$. In section 5 we introduce the integral valuations,
which occur in Theorems \ref{characterization} and \ref{characterization monotone}. Theorem \ref{characterization simple} is proved in section 6, while 
sections 6 and 7 contain the proofs of Theorems \ref{characterization} and \ref{characterization monotone}, respectively.

\section{Notations and preliminaries}\label{section 2}
\noindent
We work in the $N$-dimensional Euclidean space $\mathbb{R}^{N}$, $N\ge1$, endowed with the usual scalar product $(\cdot,\cdot)$ and norm $\|\cdot\|$. 
Given a subset $A$ of $\mathbb{R}^{N}$, $\interno(A)$, $\cl(A)$ and $\partial A$ denote the interior, the closure and the topological boundary 
of $A$, respectively. For every $x\in\R^N$ and $r\ge0$, $B_r(x)$ is the closed ball of radius $r$ centered at $x$; in particular, for simplicity
we will write $B_r$ instead of $B_r(0)$.  A {\em rigid motion} of $\R^N$ will be the composition of a translation and a rotation of $\R^N$. 
The Lebesgue measure in $\mathbb{R}^{N}$ will be denoted by $V_N$.

\subsection{Convex bodies}

We recall some notions and results from convex geometry that will be used in the sequel. Our main reference on this subject is 
the monograph by Schneider \cite{Schneider}.
As stated in the introduction, the class of convex bodies is denoted by $\K^N$. For $K,L\in{\mathcal K}^{N}$, we define the \textit{Hausdorff distance} of $K$ and $L$ as
$$
\delta(K,H)=\max\{\sup_{x\in K}{\rm dist}(x,H),\sup_{y\in H}{\rm dist}(K,y)\}.
$$
Accordingly, a sequence of convex bodies $\{K_{n}\}_{n\in\mathbb{N}}\subseteq{\mathcal K}^{N}$ is 
said to converge to $K\in{\mathcal K}^{N}$ if 
$$
\delta(K_{n},K)\rightarrow 0,\ \textnormal{as}\ n\rightarrow+\infty.
$$

\begin{remark}
\textnormal{${\mathcal K}^{N}$ with respect to Hausdorff distance is a complete metric space.}
\end{remark}

\begin{remark}
\textnormal{
For every convex subset $C$ of $\R^N$, and consequently for convex bodies, its dimension ${\rm dim}(C)$ can be defined as follows:
${\rm dim}(C)$ is the smallest integer $k$ such that there exists an affine sub-space of $\R^N$ of dimension $k$, containing $C$.}
\end{remark}

\noindent
We are ready, now, to introduce some functionals operating on ${\mathcal K}^{N}$, the intrinsic volumes, which will be of fundamental importance in this paper. Among
the various ways to define intrinsic volumes, we choose the one based on the Steiner formula. Given a convex body $K$ and $\epsilon>0$, the {\em parallel
set} of $K$ is
$$
K_\epsilon=\{x\in\R^N\,|\,{\rm dist}(x,K)\le\epsilon\}.
$$  
The following result asserts that the volume of the parallel body is a polynomial in $\epsilon$, and contains the definition of intrinsic volumes. 

\begin{thm}[\bf Steiner formula]
There exist $N$ functions $V_{0},...,V_{N-1}:{\mathcal K}^{N}\rightarrow \mathbb{R_{+}}$ such that, for all $K\in {\mathcal K}^{N}$ and for all $\epsilon\ge0$, we have
$$
V_{N}(K_\epsilon)=\sum_{i=0}^{N}V_{i}(K)\omega_{N-i}\epsilon^{N-i},
$$
where $\omega_j$ denotes the volume of the unit ball in the space $\R^j$. $V_{0}(K),\dots,V_{N}(K)$ are called the intrinsic volumes of $K$.
\end{thm}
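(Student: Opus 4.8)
The plan is to establish the Steiner formula first for polytopes, by an explicit decomposition of the parallel set according to the metric projection, and then to pass to an arbitrary convex body by approximation with polytopes. Let $P\in\K^N$ be a polytope. Every $x\in\R^N$ has a unique nearest point $p(x)\in P$, and for $x\in P_\epsilon$ one has $\|x-p(x)\|\le\epsilon$. The relative interiors of the (proper and improper) faces of $P$ partition $P$; consequently the sets
\[
A_F=\{x\in P_\epsilon:\ p(x)\in\relint(F)\},\qquad F\ \text{a face of}\ P,
\]
form a partition of $P_\epsilon$. If $F$ has dimension $j$, then for $q\in\relint(F)$ the normal cone of $P$ at $q$ is a fixed $(N-j)$-dimensional cone $N(F)$, contained in the orthogonal complement of the direction of $\mathrm{aff}(F)$, and $\{x:\ p(x)=q\}\cap P_\epsilon=q+\big(N(F)\cap B_\epsilon\big)$. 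Hence $A_F$ is the image of $\relint(F)\times\big(N(F)\cap B_\epsilon\big)$ under the (injective) sum map, and Fubini's theorem gives
\[
V_N(A_F)=\hm^{j}(F)\cdot\hm^{N-j}\big(N(F)\cap B_\epsilon\big)=\hm^{j}(F)\,\gamma(P,F)\,\omega_{N-j}\,\epsilon^{N-j},
\]
where $\gamma(P,F)\in[0,1]$ denotes the normalized solid angle of $N(F)$.

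Summing over all faces of $P$ and grouping by dimension, we obtain, for every $\epsilon\ge0$,
\[
V_N(P_\epsilon)=\sum_{j=0}^{N}\omega_{N-j}\,\epsilon^{N-j}\sum_{\dim F=j}\gamma(P,F)\,\hm^{j}(F).
\]
This proves the formula for polytopes once one \emph{defines} $V_j(P):=\sum_{\dim F=j}\gamma(P,F)\,\hm^{j}(F)$ for $j=0,\dots,N$; note $V_j(P)\ge0$, and for $j=N$ the only contributing face is $F=P$ with $\gamma(P,P)=1$, so $V_N(P)=\hm^{N}(P)$ coincides with the Lebesgue measure, consistently with the notation fixed earlier.

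To treat a general $K\in\K^N$, pick polytopes $P_m\to K$ in the Hausdorff metric (polytopes are dense in $\K^N$). Since $K_\epsilon=K+B_\epsilon$ (and likewise for each $P_m$), one has $\delta\big((P_m)_\epsilon,K_\epsilon\big)\le\delta(P_m,K)\to0$ for every fixed $\epsilon\ge0$; as $V_N$ is continuous on $\K^N$ with respect to the Hausdorff metric, $V_N\big((P_m)_\epsilon\big)\to V_N(K_\epsilon)$. Thus the polynomials $\epsilon\mapsto V_N\big((P_m)_\epsilon\big)$, all of degree at most $N$, converge pointwise on $[0,\infty)$. Evaluating at $N+1$ fixed distinct values of $\epsilon$ and inverting the resulting Vandermonde system, we see that each coefficient $\omega_{N-j}V_j(P_m)$ converges, say to $\omega_{N-j}V_j(K)$; that the limit does not depend on the chosen sequence (the coefficients of a polynomial are determined by the polynomial); that $V_j(K)\ge0$; and that
\[
V_N(K_\epsilon)=\sum_{j=0}^{N}V_j(K)\,\omega_{N-j}\,\epsilon^{N-j}\qquad\text{for all}\ \epsilon\ge0.
\]
This is the statement of the theorem, with $V_0,\dots,V_{N-1}:\K^N\to\R_+$ the functions just constructed.

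The step that requires genuine care is the polytope decomposition: one must invoke the combinatorial structure of $P$ to know that the relative interiors of its faces partition $P$, that the normal cone is constant on $\relint(F)$ and has dimension $N-\dim F$, and that $\relint(F)\times(N(F)\cap B_\epsilon)\to A_F$ is a measure‑preserving bijection; this geometric picture, together with the additivity of volume over the partition, is the core of the proof. Everything afterwards is soft: density of polytopes, continuity of $K\mapsto K_\epsilon$ and of $V_N$ in the Hausdorff metric, and the elementary fact that a pointwise limit of polynomials of uniformly bounded degree is again such a polynomial, with convergence of the coefficients.
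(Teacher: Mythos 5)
The paper offers no proof of this statement: it is quoted as a classical preliminary (with reference to Schneider's monograph), so there is nothing internal to compare against. Your argument is the standard proof of the Steiner formula --- decompose $P_\epsilon$ for a polytope $P$ according to the metric projection, writing each piece $A_F$ as the isometric image of $\relint(F)\times\bigl(N(F)\cap B_\epsilon\bigr)$ so that its volume is $\hm^{j}(F)\,\gamma(P,F)\,\omega_{N-j}\,\epsilon^{N-j}$, then pass to a general body by polytope approximation, using that $\delta(K_\epsilon,L_\epsilon)\le\delta(K,L)$, the continuity of $V_N$, and the fact that pointwise convergence of polynomials of degree at most $N$ forces convergence of their coefficients --- and it is correct; you have also correctly identified the two points needing care (the face decomposition with constant normal cones on relative interiors, and the measure-preserving parametrization of $A_F$), both of which are standard facts about polytopes.
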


In particular, one of the intrinsic volumes is the Lebesgue measure. Moreover $V_{0}$ is the Euler characteristic, so that for every $K$ we have $V_{0}(K)=1$.
The name intrinsic volumes comes from the following fact: assume that $K$ has dimension $j\in\{0,\dots,N\}$, then $K$ can be seen as a subset of $\R^j$ 
and $V_j(K)$ is the Lebesgue measure of $K$ as a subset of $\R^j$. Intrinsic volumes have many other properties, listed in the following proposition.

\begin{proposition}[{\bf Properties of intrinsic volumes.}]\label{intrinsic volumes} 
For every $k\in\{0,\dots,N\}$ the function $V_k$ is:
\begin{itemize}
\item rigid motion invariant;
\item continuous with respect to the Hausdorff metric;
\item monotone increasing: $K\subset L$ implies $V_k(K)\le V_k(L)$; 
\item a valuation:
$$
V_k(K\cup L)+ V_k(K\cap L)=V_k(K)+V_k(L)\quad\forall\,
K,L\in\K^N\;\mbox{s.t. $K\cup L\in\K^N$.}
$$
\end{itemize}
\end{proposition}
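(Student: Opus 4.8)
The plan is to derive all four properties from the single Steiner identity
$$
V_N(K_\epsilon)=\sum_{i=0}^{N}V_i(K)\,\omega_{N-i}\,\epsilon^{N-i},\qquad\epsilon\ge0,
$$
by transferring, in each case, a known property of the Lebesgue measure $V_N$ from the left-hand side to the coefficients on the right. The transfer mechanism is the elementary remark that a polynomial of degree at most $N$ is determined by its values at $N+1$ fixed distinct points $\epsilon_0<\dots<\epsilon_N$, and that the map sending those $N+1$ values to the coefficients is linear (inversion of a Vandermonde matrix), hence continuous; I note already that this linear map does \emph{not} respect inequalities, which is exactly why monotonicity will require a separate treatment. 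Throughout I would use the description $K_\epsilon=K+\epsilon B_1$ of the parallel body as a Minkowski sum, together with the behaviour of parallel bodies under the relevant operations: $(TK)_\epsilon=T(K_\epsilon)$ for a rigid motion $T$ (since $T$ is an isometry) and $(K\cup L)_\epsilon=K_\epsilon\cup L_\epsilon$ for arbitrary $K,L$.

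For rigid motion invariance, $(TK)_\epsilon=T(K_\epsilon)$ and the invariance of $V_N$ show that the Steiner polynomials of $K$ and $TK$ coincide for all $\epsilon\ge0$, and comparison of coefficients gives $V_i(TK)=V_i(K)$ for every $i$. For continuity, if $K_n\to K$ in the Hausdorff metric then $\delta\big((K_n)_\epsilon,K_\epsilon\big)\le\delta(K_n,K)\to0$ for each fixed $\epsilon$; it then remains to know that $V_N$ is continuous on $\K^N$. I would prove this by squeezing: upper semicontinuity follows from $K_n\subseteq K+\delta_n B_1$ (with $\delta_n:=\delta(K_n,K)$) and the Steiner identity applied to the fixed body $K$, while lower semicontinuity follows from the fact that $(1-s)K\subseteq K_n$ for $n$ large, so $\liminf_n V_N(K_n)\ge(1-s)^NV_N(K)$, and then letting $s\to0$. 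Applying this to $(K_n)_\epsilon\to K_\epsilon$ yields $V_N\big((K_n)_\epsilon\big)\to V_N(K_\epsilon)$ for each $\epsilon$, and the Vandermonde remark upgrades this to $V_i(K_n)\to V_i(K)$ for all $i$.

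For the valuation property, let $K,L\in\K^N$ with $K\cup L\in\K^N$. The place where convexity of the union genuinely enters is the identity $(K\cap L)_\epsilon=K_\epsilon\cap L_\epsilon$, which I would obtain from the classical fact (see \cite{Schneider}) that, whenever $K,L$ are convex with $K\cup L$ convex and $M$ is any convex body, one has $(K\cap L)+M=(K+M)\cap(L+M)$; here one takes $M=\epsilon B_1$. Combining this with $(K\cup L)_\epsilon=K_\epsilon\cup L_\epsilon$ and with the additivity $V_N(A\cup B)+V_N(A\cap B)=V_N(A)+V_N(B)$ of the Lebesgue measure produces an equality between Steiner polynomials, whose coefficientwise comparison yields the valuation identity for each $V_k$.

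The genuine obstacle is monotonicity, because the scheme above breaks down here: $K\subseteq L$ only gives $V_N(K_\epsilon)\le V_N(L_\epsilon)$ for all $\epsilon\ge0$, and pointwise domination of one polynomial by another on $[0,\infty)$ does not force the coefficients to be ordered (for instance $2\epsilon\le 1+\epsilon^2$ on $[0,\infty)$). I would therefore prove monotonicity of $V_k$ separately, the most economical route being Kubota's integral recursion, which identifies $V_k(K)$, up to an explicit positive constant, with the average over the Grassmannian of $k$-dimensional linear subspaces of the $k$-dimensional Lebesgue measure of the orthogonal projection of $K$ onto the subspace; since orthogonal projection and $k$-dimensional volume both respect inclusions, so does $V_k$. (An alternative is the representation of $V_k$ via non-negative mixed surface area measures together with $h_K\le h_L$; a third is simply to quote \cite{Schneider}.) I expect that establishing, or more realistically citing, Kubota's formula is the only non-routine ingredient of the whole statement.
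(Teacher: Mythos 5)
Your proposal is correct, but note that the paper does not prove this proposition at all: it is stated as standard background from convex geometry, with Schneider's monograph as the reference, and the surrounding text only records the consequences that are actually used later (Hadwiger's theorem, the volume theorem, the convention $V_k(\varnothing)=0$). What you have written is essentially the standard textbook derivation from the Steiner polynomial, and it is sound: the Vandermonde/coefficient-comparison mechanism correctly transfers invariance, continuity and the valuation identity of $V_N$ to the coefficients, the identity $(K\cap L)+\epsilon B_1=K_\epsilon\cap L_\epsilon$ for $K\cup L$ convex is exactly the right (and the only non-obvious) ingredient for the valuation property, and you are right that monotonicity cannot be read off from pointwise domination of Steiner polynomials and must come from Kubota's projection formula, mixed volumes, or a citation. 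Two small points you should tidy up if you write this in full: in the lower-semicontinuity step the inclusion $(1-s)K\subseteq K_n$ presupposes that $K$ has an interior point at the origin, so you should either translate and treat the case $V_N(K)=0$ separately (where lower semicontinuity is trivial) or, more simply, choose all $N+1$ interpolation points $\epsilon_j>0$ so that the bodies $K_{\epsilon_j}$ automatically have nonempty interior; and the uniform boundedness of the coefficients $V_i(K_n)$, if you need it, follows from their nonnegativity (built into the paper's statement of the Steiner formula) together with $V_N\bigl((K_n)_1\bigr)\le V_N\bigl((B_R)_1\bigr)$ for a ball $B_R$ containing all the $K_n$. Given that the paper itself treats the proposition as citable, quoting \cite{Schneider} for the whole statement would also have been acceptable, but your self-contained argument is a legitimate and instructive alternative.
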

We also set conventionally
$$
V_k(\varnothing)=0,\quad\forall\,k=0,\dots,N.
$$

The previous properties essentially characterize intrinsic volumes as stated by the following result proved by Hadwiger, already mentioned in the 
introduction.

\begin{thm}[\bf Hadwiger]\label{t2}
If $\sigma$ is a continuous and rigid motion invariant valuation, then there exist $(N+1)$ real coefficients $c_{0},...,c_{N}$ such that
$$
\sigma(K)=\sum_{i=0}^{N}c_{i}V_{i}(K),
$$
for all $K\in{\mathcal K}^{N}\cup\{\varnothing\}$.
\end{thm}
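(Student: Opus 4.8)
\medskip

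\noindent\emph{Proof proposal.} This is a classical and substantial theorem; I would follow the streamlined argument due to Klain (see \cite{Klain}, \cite{Klain-Rota}), whose skeleton is an induction on the dimension $N$ that reduces everything to the so-called \emph{volume theorem}. The case $N=0$ is trivial, and $N=1$ is elementary: restricting a continuous, translation invariant valuation to segments, additivity under subdivision together with translation invariance gives $\sigma([0,s+t])+\sigma(\{0\})=\sigma([0,s])+\sigma([0,t])$, so $g(t):=\sigma([0,t])-\sigma(\{0\})$ solves Cauchy's functional equation, and continuity forces $g(t)=c_1t$; hence $\sigma=c_0V_0+c_1V_1$.

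For the inductive step, assume the theorem in dimension $N-1$ and let $\sigma$ be a continuous, rigid motion invariant valuation on $\K^N$. Its restriction to the convex bodies lying in a fixed hyperplane $H_0\cong\R^{N-1}$ is again a continuous, rigid motion invariant valuation, hence by the inductive hypothesis equals $\sum_{i=0}^{N-1}c_iV_i$ there; by rigid motion invariance of $\sigma$ the same identity holds on every hyperplane, i.e. on every convex body of dimension $\le N-1$. Therefore $\tau:=\sigma-\sum_{i=0}^{N-1}c_iV_i$ is a continuous, rigid motion invariant valuation on $\K^N$ that vanishes on all lower-dimensional bodies; that is, $\tau$ is \emph{simple}. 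The theorem then follows from the volume theorem: a simple, continuous, rigid motion invariant valuation $\phi$ on $\K^N$ equals $c_NV_N$ for some constant $c_N$ (and the convention $V_i(\varnothing)=0$ handles the empty set).

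To prove the volume theorem I would proceed through three steps. The first is the \emph{shear lemma}: a simple valuation is unchanged when a convex body is sheared parallel to a hyperplane. One establishes this first for polytopes, by cutting the polytope along finitely many hyperplanes into pieces that can be reassembled by rigid motions into the sheared polytope, the cuts being supported on $(N-1)$-dimensional sets which a simple valuation ignores, and then passes to the Hausdorff limit using continuity and the density of polytopes. Since shears together with rigid motions generate a group acting transitively on the $N$-simplices of any prescribed volume, the shear lemma forces $\phi(\Delta)$ to depend on an $N$-simplex $\Delta$ only through $V_N(\Delta)$. In the second step, subdividing the dilate $m\Delta$ ($m\in\N$) into $m^N$ sub-simplices of equal volume and using simplicity gives $\phi(m\Delta)=m^N\phi(\Delta)$, whence $\phi(t\Delta)=t^N\phi(\Delta)$ for every rational, and then (by continuity) every real, $t\ge0$; combined with the first step this yields $\phi(\Delta)=c_NV_N(\Delta)$ for \emph{every} $N$-simplex. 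In the third step, every polytope is triangulated into $N$-simplices, so by inclusion-exclusion for valuations (the Groemer extension theorem), with all lower-dimensional overlap terms killed by simplicity, $\phi$ and $c_NV_N$ agree on every polytope; since polytopes are Hausdorff-dense in $\K^N$ and both functionals are continuous, $\phi=c_NV_N$ on all of $\K^N$.

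I expect the real difficulty to be concentrated in the volume theorem, and within it in the shear lemma: making the cut-and-reassemble construction precise (so that only lower-dimensional, hence $\phi$-negligible, pieces intervene) and controlling the passage to the limit is the technical core of the whole argument. The dimensional reduction of the second paragraph and the final density argument are comparatively routine, modulo the standard but not entirely trivial fact (Groemer's extension theorem) that a continuous valuation on $\K^N$ behaves additively across a triangulation.
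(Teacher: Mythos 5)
The paper does not prove this statement: Theorem \ref{t2} is quoted as a classical result with references to \cite{Hadwiger}, \cite{Klain}, \cite{Klain-Rota}, and the only piece the authors actually re-use is the Volume Theorem (Corollary \ref{c1}) together with the remark that continuity along decreasing sequences suffices. So there is no in-paper argument to compare against, and your sketch must be judged against the cited sources. Its architecture --- induction on $N$, subtraction of $\sum_{i=0}^{N-1}c_iV_i$ to reduce to a simple valuation, and the Volume Theorem as the core, finished by triangulation, Groemer's extension and density of polytopes --- is exactly the standard route, and your $N=1$ case and the dimensional reduction are fine.

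The sketch of the Volume Theorem itself, however, has concrete gaps. First, the shear lemma is not obtained by ``cutting the polytope along finitely many hyperplanes into pieces that can be reassembled by rigid motions into the sheared polytope'': for a general polytope and a general shear no such exact finite dissection is available, and you cannot simply assert translation scissors-congruence of a body with its shear. What the standard proofs actually establish is the weaker \emph{sliding lemma} --- for a simple translation-invariant valuation, the value of a pyramid is unchanged when its apex moves parallel to the base --- proved via an exact combinatorial dissection identity for prisms; this suffices because a simplex is a pyramid over each facet, but that identity is precisely where the work lies and your sketch replaces it with a construction that does not exist as described. Second, for $N\ge 3$ the dilate $m\Delta$ does \emph{not} decompose into $m^{N}$ sub-simplices congruent to $\Delta$ (the standard equal-volume subdivision of $2\Delta$ already contains non-simplicial hypersimplex pieces); the step is salvageable --- once $\phi(\Delta)=h(V_N(\Delta))$ is known, cut a simplex through a vertex into two simplices of prescribed volumes to get $h(v+w)=h(v)+h(w)$ and invoke Cauchy's equation plus continuity --- but not as written. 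Third, the paper's rigid motions are orientation-preserving (translation composed with rotation), so invariance under reflections is not part of the hypothesis; Klain's short proof devotes its hardest step to showing the ``odd part'' of a simple continuous valuation vanishes. Your route would bypass this only if the full shear lemma (hence invariance under all of $SL(N)$ on simplices) were actually proved, which returns you to the first gap.
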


The previous theorem claims that $\{V_{0},...,V_{N}\}$ spans the vector space of all continuous and invariant valuations on ${\mathcal K}^{N}\cup\{\varnothing\}$. It can be 
also proved that $V_{0},...,V_{N}$ are linearly independent, so they form a basis of this vector space. In Hadwiger's Theorem continuity can be replaced by 
monotonicty hypothesis, obtaining the following results.

\begin{thm}\label{t3}
If $\sigma$ is a monotone increasing (resp., decreasing) rigid motion invariant valuation, then there exist $(N+1)$ coefficients $c_{0},...,c_{N}$ such that $c_{i}\geq 0$ 
(resp $c_i\le0$) for every $i$ and
$$
\sigma(K)=\sum_{i=0}^{N}c_{i}V_{i}(K),
$$
for all $K\in{\mathcal K}^{N}\cup\{\varnothing\}$.
\end{thm}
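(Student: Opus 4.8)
The plan is to derive Theorem \ref{t3} from the continuous version, Theorem \ref{t2}, and then to pin down the signs of the coefficients using monotonicity together with the homogeneity of the intrinsic volumes. First I would record the classical fact that a rigid motion invariant, monotone valuation $\sigma$ on $\K^N$ is automatically continuous with respect to the Hausdorff metric (a theorem of McMullen; see also \cite{Schneider}). Granting this, Theorem \ref{t2} applies and yields real constants $c_0,\dots,c_N$ with
$$
\sigma(K)=\sum_{i=0}^N c_i\, V_i(K)\qquad\text{for all }K\in\K^N\cup\{\varnothing\}.
$$
It then remains only to show that $c_i\ge 0$ for every $i$ when $\sigma$ is increasing; the decreasing case follows by applying the result to $-\sigma$.

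For the sign of $c_k$ with $k\ge 1$, I would fix a $k$-dimensional linear subspace $E\subseteq\R^N$ and restrict attention to the $k$-dimensional convex bodies contained in $E$. For such a body $K$ one has $V_i(K)=0$ whenever $i>k$, while $V_i(\lambda K)=\lambda^i V_i(K)$ for every $\lambda>0$ (homogeneity of the intrinsic volumes, an immediate consequence of the Steiner formula applied to $\lambda K$). Hence
$$
\sigma(\lambda K)=\sum_{i=0}^k c_i\,\lambda^i\,V_i(K),\qquad\lambda>0 .
$$
Choosing two $k$-dimensional convex bodies $K\subseteq K'$ in $E$ with $V_k(K)<V_k(K')$ (for instance two concentric $k$-dimensional balls in $E$), monotonicity gives $\sigma(\lambda K)\le\sigma(\lambda K')$ for all $\lambda>0$; dividing this polynomial inequality by $\lambda^k$ and letting $\lambda\to+\infty$ leaves only the top-degree terms, so $c_k\,V_k(K)\le c_k\,V_k(K')$, which forces $c_k\ge 0$. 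For $k=0$ it suffices to recall that $V_0\equiv 1$ on $\K^N$ and $V_i(\varnothing)=0$ for all $i$, so that $\sigma(\varnothing)=0$ while $\sigma(\{x\})=c_0$ for any point $x$; since $\varnothing\subseteq\{x\}$, monotonicity yields $c_0\ge 0$.

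The only genuinely delicate step is the first one, the passage from monotonicity to Hausdorff continuity. A monotone invariant valuation is certainly bounded on every family $\{K\in\K^N:K\subseteq B_R\}$, and a Hausdorff convergence $K_j\to K$ can be squeezed between inclusions $K_j\subseteq K_{\epsilon_j}$ and $K\subseteq (K_j)_{\epsilon_j}$ with $\epsilon_j\to 0$, so that continuity essentially reduces to the continuity of $\epsilon\mapsto\sigma(K_\epsilon)$ at $\epsilon=0$ and to monotone convergence along outer (and inner) parallel bodies; making this rigorous requires some care, and I expect it to be the main obstacle. Alternatively, one can avoid the continuity issue altogether by re-running the inductive proof of Hadwiger's theorem with the monotonicity hypothesis in place of continuity at each stage — in particular replacing the volume theorem for continuous simple valuations by its monotone counterpart, where the same dilation argument as above produces the nonnegativity of the relevant constant — which is the route of Hadwiger's original treatment.
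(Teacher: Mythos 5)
The paper does not actually prove Theorem \ref{t3}: like Theorem \ref{t2} and Corollary \ref{c1}, it is quoted as a classical result of Hadwiger (with references to Hadwiger's book, Klain--Rota and Schneider), so there is no internal proof to compare yours against. Your outline is the standard argument and is correct in substance. The sign determination is complete as written: homogeneity $V_i(\lambda K)=\lambda^i V_i(K)$ together with the vanishing of $V_i$ on bodies of dimension less than $i$ does isolate the top coefficient after dividing by $\lambda^k$ and letting $\lambda\to\infty$, and the case $k=0$ works because $\sigma(\varnothing)=0$ is implicit in the statement (the representation must hold at $K=\varnothing$, where all $V_i$ vanish). The one step carrying real content is the reduction of monotonicity to continuity. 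As a citation (McMullen; see also the notes to Section 6.4 of \cite{Schneider}) this is legitimate, but you are right to flag it, because your squeeze sketch does not close on its own: monotonicity only gives that $\sigma(K_\epsilon)$ decreases to some limit $\ge\sigma(K)$ as $\epsilon\to 0^+$, and identifying that limit with $\sigma(K)$ is exactly the hard point --- note in particular that $(K\cap L)_\epsilon$ is in general strictly contained in $K_\epsilon\cap L_\epsilon$, so the ``regularization'' $K\mapsto\lim_{\epsilon\to0^+}\sigma(K_\epsilon)$ is not obviously a valuation, and no elementary monotone-convergence argument is available. Your fallback --- re-running the inductive proof of Hadwiger's theorem with monotonicity in place of continuity, replacing the continuous volume theorem by its monotone analogue and signing the constant there by the same dilation argument --- is the safe and standard route (it is Hadwiger's original one), and if you intend to write the proof out in full rather than cite McMullen, that is the version you should develop.
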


A special case of the  preceding results concerns {\em simple} valuations. A valuation $\mu$ is said to be simple if 
$$
\mu(K)=0\quad\forall\, K\in\K^N\;\mbox{s.t. $\dim(K)<N$.}
$$

\begin{corollary}[\bf Volume Theorem]\label{c1}
Let $\sigma:{\mathcal K}^{N}\cup\{\varnothing\}\rightarrow \mathbb{R}$ be a rigid motion invariant, simple and continuous valuation. Then there exists a constant 
$c$ such that
$$
\mu=cV_{N}.
$$
\end{corollary}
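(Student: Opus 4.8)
The plan is to derive Corollary~\ref{c1} from Hadwiger's Theorem (Theorem~\ref{t2}) and then use simplicity to kill every intrinsic volume except the top one. Since $\sigma$ is a continuous and rigid motion invariant valuation on $\K^N\cup\{\varnothing\}$, Theorem~\ref{t2} furnishes real numbers $c_0,\dots,c_N$ with
$$
\sigma(K)=\sum_{i=0}^N c_i V_i(K)\qquad\text{for all }K\in\K^N\cup\{\varnothing\}.
$$
It therefore suffices to prove that simplicity forces $c_0=\dots=c_{N-1}=0$: then $\sigma=c_N V_N$ and one sets $c:=c_N$.

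To do this I would test $\sigma$ on degenerate bodies. Fix a hyperplane $H\subset\R^N$ and, for each $s>0$, let $Q(s)\subset H$ be an $(N-1)$-dimensional cube of edge length $s$ (when $N=1$, take $Q(s)=\{x_0\}$ for a fixed point $x_0$, independent of $s$). Since $\dim Q(s)=N-1<N$, simplicity gives $\sigma(Q(s))=0$. On the other hand, each $V_i$ is \emph{intrinsic} — it coincides with the $i$-th intrinsic volume computed in the affine hull of its argument — and is homogeneous of degree $i$, so applying the Steiner formula inside $H\cong\R^{N-1}$ we get $V_i(Q(s))=\binom{N-1}{i}s^i$ for $0\le i\le N-1$ and $V_N(Q(s))=0$. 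Combining these facts,
$$
0=\sigma(Q(s))=\sum_{i=0}^{N-1}c_i\binom{N-1}{i}s^i\qquad\text{for all }s>0.
$$
The right-hand side is a polynomial in $s$ vanishing identically on $(0,\infty)$, hence all of its coefficients are zero; since $\binom{N-1}{i}\neq0$ we conclude $c_i=0$ for every $i=0,\dots,N-1$, which completes the proof.

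I do not expect a serious obstacle. The only mildly delicate point is the evaluation $V_i(Q(s))=\binom{N-1}{i}s^i$, which is a routine consequence of the definition of intrinsic volumes via the Steiner formula, their homogeneity, and the ``intrinsic'' property recalled just after the Steiner formula; one should also note the trivial degenerate case $N=1$, where simplicity directly gives $\sigma(\{x_0\})=c_0=0$. A self-contained alternative — dissecting a cube into simplices and inducting on the dimension, in the spirit of the classical proof of the volume theorem — is available but considerably longer, and unnecessary here since Hadwiger's Theorem is already at our disposal.
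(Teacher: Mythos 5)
Your argument is correct and is exactly the derivation the paper implicitly intends: the result is stated as a corollary of Hadwiger's Theorem (Theorem~\ref{t2}) with no written proof, and the standard route is precisely to expand $\sigma=\sum_i c_iV_i$ and use simplicity on lower-dimensional bodies of varying size to kill $c_0,\dots,c_{N-1}$ via the polynomial identity in $s$. The evaluation $V_i(Q(s))=\binom{N-1}{i}s^i$ is the standard elementary-symmetric-function formula for boxes (and in fact only homogeneity plus $V_i(Q(1))>0$ for $i\le N-1$ is needed), so there is no gap.
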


\begin{remark} {\rm In the previous theorem continuity can be replaced by the following weaker assumption: for every 
{\em decreasing} sequence $K_i$, $i\in\N$, in $\K^N$, converging to $K\in\K^N$, 
$$
\lim_{i\to\infty}\sigma(K_i)=\sigma(K).
$$
This follows, for instance, from the proof of the volume theorem given in \cite{Klain}.} 
\end{remark}

\section{Quasi-concave functions}\label{section 3}

\subsection{The space ${\mathcal C}^N$}

\begin{defn} A function $f:\mathbb{R}^{N}\rightarrow \mathbb{R}$ is said to be {\em quasi-concave} if
\begin{itemize}
\item $f(x)\geq 0$ for every $x\in \mathbb{R}^{N}$,
\item for every $t>0$, the set
$$
L_{t}(f)=\{x\in\mathbb{R}^{N}:\ f(x)\geq t\}
$$
is either a convex body or is empty.
\end{itemize}
We will denote with ${\mathcal C}^N$ the set of all quasi-concave functions defined on $\mathbb{R}^N$.
\end{defn}

Typical examples of quasi-convex functions are (positive multiples of) characteristic functions of convex bodies.
For $A\subseteq \mathbb{R}^{N}$ we denote by $I_{A}$ its characteristic function
$$
I_{A}:\mathbb{R}^{N}\rightarrow\mathbb{R},\quad 
I_{A}(x)=\begin{cases}
1\; &\mbox{if $x\in A$,}\\
0\; &\mbox{if $\notin A$.}
\end{cases}
$$
Then we have that $s\,I_{K}\in{\mathcal C}^N$ for every $s>0$ and $K\in{\mathcal K}^{N}$. We can also describe the sets $L_{t}(sI_{K})$, indeed
$$
L_{t}(s\,I_{K})=\begin{cases}
\varnothing\; &\mbox{if $t>s$},\\
K\; &\mbox{if $0<t\leq s$.}
\end{cases}
$$

The following proposition gathers some of the basic properties of quasi-concave functions.

\begin{proposition}\label{p1}
If $f\in{\mathcal C}^N$ then
\begin{itemize}
\item $\displaystyle{\lim_{||x||\to+\infty}} f(x)=0$,
\item $f$ is upper semi-continuous,
\item $f$ admits a maximum in $\R^n$, in particular
$$
\displaystyle{\sup_{\mathbb{R}^{N}}} f<+\infty.
$$
\end{itemize}
\end{proposition}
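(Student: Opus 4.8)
The plan is to prove the three assertions in that order, since each relies on the previous one; the single structural fact driving everything is that, by the very definition of $\C^N$, each superlevel set $L_t(f)$ with $t>0$ is a convex body, hence compact. For the decay at infinity I would argue by contradiction: if $f(x)$ does not tend to $0$ as $\|x\|\to\infty$, then there are $t>0$ and a sequence $x_j\in\R^N$ with $\|x_j\|\to\infty$ and $f(x_j)\ge t$ for all $j$. Then $\{x_j\}_j\subseteq L_t(f)$, which is a convex body and in particular bounded, contradicting $\|x_j\|\to\infty$.

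For upper semi-continuity I would invoke the standard criterion that $f$ is u.s.c.\ if and only if the superlevel set $\{x\in\R^N:f(x)\ge t\}$ is closed for every $t\in\R$. For $t\le0$ this set equals $\R^N$, since $f\ge0$; for $t>0$ it equals $L_t(f)$, which is either empty or a convex body, hence compact, hence closed. In all cases it is closed.

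For the existence of a maximum, set $M=\sup_{\R^N}f$. If $L_t(f)=\varnothing$ for every $t>0$ then $f\equiv0$ and there is nothing to prove, so suppose $K:=L_{t_0}(f)\ne\varnothing$ for some $t_0>0$. Then $K$ is a convex body, hence compact, and by the previous step $f$ restricted to $K$ is upper semi-continuous, so it attains a maximum at some $\bar x\in K$, with $f(\bar x)\ge t_0$. Outside $K$ one has $f<t_0\le f(\bar x)$. Therefore $f(\bar x)=M$; in particular $M<+\infty$.

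I do not expect a genuine obstacle here: the only points requiring a little care are the degenerate cases — the nonpositive levels $t$ in the upper semi-continuity argument, and the possibility that $L_t(f)$ is empty for all $t>0$ (i.e.\ $f\equiv0$) in the last part — both handled by the case distinctions above.
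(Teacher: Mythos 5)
Your proof is correct and follows essentially the same route as the paper's: all three claims rest on the compactness of the superlevel sets $L_t(f)$. The only cosmetic difference is in the third item, where the paper extracts a convergent maximizing sequence (bounded because $f$ decays at infinity) and applies upper semi-continuity, whereas you restrict $f$ to a nonempty compact superlevel set and invoke the attainment of the maximum by an upper semi-continuous function on a compact set; the two arguments are interchangeable.
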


\begin{proof} To prove the first property, let $\epsilon>0$; as $L_\epsilon(f)$ is compact, there exists $R>0$ such that
$L_\epsilon(f)\subset B_R$. This is equivalent to say that
$$
f(x)\le\epsilon\quad\forall\, x\quad\mbox{s.t.}\quad\|x\|\ge R.
$$
Upper semi-continuity follows immediately from compactness of super-level sets. 
Let $M=\sup_{\R^N}f$ and assume that $M>0$. Let $x_n$, $n\in\N$, be a maximizing sequence:
$$
\lim_{n\to\infty} f(x_n)=M.
$$
As $f$ decays to zero at infinity, the sequence $x_n$ is compact; then we may assume that it converges to $\bar x\in\R^N$. Then,
by upper semi-continuity
$$
f(\bar x)\ge\lim_{n\to\infty}f(x_n)=M.
$$
\end{proof}

For simplicity, given $f\in\C^N$, we will denote by $M(f)$ the maximum of $f$ in $\R^N$.

\begin{remark} 
\textnormal{Let $f\in\C^N$, we denote with $\textnormal{supp}(f)$ the support of $f$, that is 
$$
\supp (f)=\cl(\{x\in\mathbb{R}^{N}:\ f(x)>0 \}).
$$
This is a convex set; indeed
$$
\supp(f)=
\bigcup_{k=1}^\infty\{x\in\R^N\,:\,f(x)\ge 1/k\}.
$$
The sets
$$
\{x\in\R^N\,:\,f(x)\ge 1/k\}\quad k\in\N,
$$
forms an increasing sequence of convex bodies and their union is convex. 
}
\end{remark}

\begin{remark}
\textnormal{A special sub-class of quasi-concave functions is that formed by {\em log-concave} functions. Let $u$ be a function defined on all $\mathbb{R}^{N}$, 
with values in $\R\cup\{+\infty\}$, convex and such that $\lim_{||x||\to+\infty} f(x)=+\infty$. Then the function $f=e^{-u}$ is quasi-concave (here we adopt the convention
$e^{-\infty}=0$). If $f$ is of this form is said to be a log-concave function.
}
\end{remark}

\subsection{Max and min of quasi-concave functions}
Let $f,g:\mathbb{R}^{N}\rightarrow \mathbb{R}$; we define the point-wise maximum and minimum  function between $f$ and $g$ as
$$
f\vee g(x)=\max\{f(x),g(x)\},\quad
f\wedge g(x)=\min\{f(x),g(x)\},
$$
for all $x\in\mathbb{R}^{N}$. These operations, applied on ${\mathcal C}^{N}$, will replace the union and intersection in the definition of valuations on 
${\mathcal K}^{N}\cup\{\varnothing\}$. The proof of the following equalities is straightforward.

\begin{lemma}\label{l1} If $f$ and $g$ belong to $\C^N$ and $t>0$:
$$
L_{t}(f\wedge g)=L_{t}(f)\cap L_{t}(g),\quad
L_{t}(f\vee g)=L_{t}(f)\cup L_{t}(g).
$$
\end{lemma}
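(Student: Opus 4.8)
The plan is to prove both set identities directly from the definitions, arguing pointwise and using only the elementary description of $\max$ and $\min$; no convexity or compactness of the level sets is actually needed.

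Fix $f,g\in\C^N$ and $t>0$. For the first identity, take an arbitrary $x\in\R^N$. Then $x\in L_t(f\wedge g)$ means $(f\wedge g)(x)=\min\{f(x),g(x)\}\ge t$, and the minimum of two real numbers is $\ge t$ precisely when both numbers are $\ge t$; hence $(f\wedge g)(x)\ge t$ is equivalent to the conjunction $f(x)\ge t$ and $g(x)\ge t$, that is, to $x\in L_t(f)$ and $x\in L_t(g)$. Since $x$ was arbitrary, $L_t(f\wedge g)=L_t(f)\cap L_t(g)$.

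For the second identity, again take $x\in\R^N$ arbitrary. Now $x\in L_t(f\vee g)$ means $\max\{f(x),g(x)\}\ge t$, and the maximum of two real numbers is $\ge t$ exactly when at least one of them is $\ge t$; so this is equivalent to $f(x)\ge t$ or $g(x)\ge t$, i.e. to $x\in L_t(f)\cup L_t(g)$. Hence $L_t(f\vee g)=L_t(f)\cup L_t(g)$.

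There is essentially no obstacle in this argument: it is a pure rephrasing of the definition of $L_t$ together with the meaning of $\max$ and $\min$. The only subtlety worth flagging is that these identities are purely set-theoretic and do not by themselves guarantee that $f\wedge g$ or $f\vee g$ lies in $\C^N$ — indeed $L_t(f)\cap L_t(g)$ is again a convex body whenever both factors are, whereas $L_t(f)\cup L_t(g)$ need not be convex; the question of membership in $\C^N$, when it is needed for the valuation property, is a separate matter treated elsewhere.
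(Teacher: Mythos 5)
Your proof is correct and is exactly the ``straightforward'' pointwise verification the paper has in mind (it omits the proof entirely, noting only that it is immediate from the definitions of $\max$, $\min$, and $L_t$). Your closing remark about membership in $\C^N$ is also accurate and matches the paper's subsequent corollary and counterexample.
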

As the intersection of two convex bodies is still a convex body, we have the following consequence.
\begin{corollary}
For all $f,g\in {\mathcal C}^N$, $f\wedge g \in {\mathcal C}^N$.
\end{corollary}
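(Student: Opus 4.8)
The plan is to deduce this directly from Lemma \ref{l1}, reducing the statement about functions to the already-recorded fact that convex bodies are stable under intersection. First I would check the non-negativity requirement: since $f(x)\ge 0$ and $g(x)\ge 0$ for every $x\in\R^N$, we get $(f\wedge g)(x)=\min\{f(x),g(x)\}\ge 0$, so the first bullet in the definition of $\C^N$ holds automatically.

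Next I would handle the super-level sets. Fix $t>0$. By Lemma \ref{l1} we have $L_t(f\wedge g)=L_t(f)\cap L_t(g)$. Since $f,g\in\C^N$, each of $L_t(f)$ and $L_t(g)$ is either a convex body or empty. If at least one of them is empty, the intersection is empty, which is permitted by the definition of quasi-concavity. If both are nonempty, then $L_t(f)\cap L_t(g)$ is the intersection of two compact convex sets, hence compact (intersection of compact sets) and convex (intersection of convex sets); so it is again either a convex body or empty, the latter still being allowed. In all cases $L_t(f\wedge g)$ meets the requirement in the definition of $\C^N$.

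Combining the two observations yields $f\wedge g\in\C^N$. I do not expect any real obstacle here: the only point that needs a moment's care is the bookkeeping around the empty set, i.e.\ that the family ``convex body or $\varnothing$'' is closed under binary intersection, which is immediate from the corresponding property of $\K^N$ recalled in Section \ref{section 2} (the intersection of convex bodies is a convex body) together with the convention that empty level sets are allowed. One could equivalently phrase this corollary as the functional counterpart of the fact that $\K^N$ with $\varnothing$ adjoined is a $\cap$-semilattice, which is precisely what makes the pointwise minimum the natural analogue of intersection in the valuation identity \eqref{valutazione}.
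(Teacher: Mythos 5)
Your proof is correct and follows the same route as the paper, which deduces the corollary immediately from Lemma \ref{l1} and the fact that the intersection of two convex bodies is again a convex body (or empty). You simply spell out the non-negativity check and the empty-set bookkeeping that the paper leaves implicit.
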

On the other hand, in general $f,g\in\C^N$ does not imply that $f\vee g$ does, as it is shown by the example in which $f$ and $g$ are characteristic functions 
of two convex bodies with empty intersection.

The following lemma follows from the definition of quasi-concave function and the fact that if $T$ is a rigid motion of $\R^N$ and 
$K\in\K^N$, then $T(K)\in\K^N$.

\begin{lemma}\label{l3}
Let $f\in {\mathcal C}^N$ be a quasi concave function and $T:\mathbb{R}^{N}\rightarrow \mathbb{R}^{N}$ a rigid motion, then $f\circ T\in {\mathcal C}^N$.
\end{lemma}

\subsection{Three technical lemmas}
We are going to prove some lemmas which will be useful for the study of continuity of valuations.

\begin{lemma}\label{lemma tecnico 1}
Let $f\in {\mathcal C}^{N}$. For all $t>0$, except for at most countably many values, we have
$$
L_{t}(f)=\cl(\{x\in\mathbb{R}^{N}:\ f(x)>t\}).
$$
\end{lemma}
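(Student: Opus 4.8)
The plan is to exploit monotonicity of the set-valued map $t\mapsto L_t(f)$ together with the fact that a monotone family of convex bodies can have only countably many discontinuities. First I would observe that, since $f\in\C^N$ decays to zero at infinity and attains a maximum $M(f)$, every nonempty $L_t(f)$ is contained in a fixed large ball $B_R$, and $L_t(f)=\varnothing$ for $t>M(f)$. For $0<t\le M(f)$ the sets $L_t(f)$ form a decreasing (with respect to inclusion) family of convex bodies. On such a family consider the real function $t\mapsto V_N(B_R)-V_N(L_t(f))$, or more robustly $t\mapsto \mathrm{dist}(0,\partial L_t(f))$ together with $t\mapsto \diam(L_t(f))$; each of these is monotone in $t$, hence has at most countably many points of discontinuity. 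I would show that outside the union of these countable sets the family $L_t(f)$ is "continuous from both sides" in the Hausdorff metric, i.e. $L_s(f)\to L_t(f)$ as $s\to t$.

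Next I would relate the desired identity to this Hausdorff continuity. Always one has the inclusion $\cl(\{f>t\})\sub L_t(f)$, since $L_t(f)$ is closed. For the reverse inclusion, note that $\{f>t\}=\bigcup_{s>t}L_s(f)$ and hence $\cl(\{f>t\})=\cl\bigl(\bigcup_{s>t}L_s(f)\bigr)$. On the other hand, by upper semicontinuity of $f$ (Proposition \ref{p1}) together with convexity, $L_t(f)=\bigcap_{s<t}L_s(f)$; and since the $L_s(f)$ decrease to $L_t(f)$, we have $L_s(f)\to L_t(f)$ in the Hausdorff metric precisely at the points $t$ where $t\mapsto V_N(L_t(f))$ (equivalently the monotone gauge functions above) is continuous. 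At such a $t$, the "limit from below'' $\bigcap_{s<t}L_s(f)$ and the "limit from above'' $\cl(\bigcup_{s>t}L_s(f))$ must coincide — each equals the Hausdorff limit of $L_s(f)$ as $s\to t$ — and therefore $L_t(f)=\cl(\{f>t\})$. So the countable exceptional set is exactly (a subset of) the discontinuity set of the monotone function $t\mapsto V_N(L_t(f))$.

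The one point that needs a little care is the lower-dimensional case: if $\dim(L_t(f))<N$ then $V_N(L_t(f))=0$ for all such $t$ and the volume function alone does not detect the relevant jumps. I would handle this by using instead the pair of monotone quantities $r(t)=\sup\{\rho:\exists\,x,\ B_\rho(x)\sub L_t(f)\}$ (the inradius, possibly zero) and $\diam(L_t(f))$, or, cleanest, by arguing that on the affine hull: either $L_t(f)$ is eventually a single point or lies in a fixed proper affine subspace on an interval of $t$'s and one repeats the argument one dimension down, or $\dim L_t(f)$ itself jumps, which can happen at only countably many $t$ since it is a monotone $\{0,1,\dots,N\}$-valued function of $t$. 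Combining the (at most) countably many $t$ where $\dim L_t(f)$ jumps with the (at most) countably many $t$ where the relevant monotone gauge is discontinuous gives a countable exceptional set outside of which $L_t(f)=\cl(\{x:f(x)>t\})$, as claimed. The main obstacle, as indicated, is organizing the degenerate-dimension bookkeeping cleanly; everything else is the standard "monotone functions jump countably often'' principle applied in the Hausdorff metric.
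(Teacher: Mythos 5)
Your argument is correct in substance and is, at its core, the same counting as the paper's, just packaged differently. The paper sets $H_t=\cl(\{x:f(x)>t\})$ and $D_t=L_t(f)\setminus H_t$, observes that the $D_t$ are pairwise disjoint (they sit inside $\{f=t\}$) subsets of a fixed compact set with $V_N(D_t)>0$ whenever $D_t\ne\varnothing$ in the full-dimensional regime, and concludes countability from disjointness; degenerate levels are handled by induction on $N$ after restricting $f$ to the affine hull of the low-dimensional level sets. Your version replaces the disjointness count with ``a bounded monotone function has countably many discontinuities'' applied to $u(t)=V_N(L_t(f))$; since the jump of $u$ at $t$ is exactly $V_N(D_t)$ (the left limit of $u$ at $t$ is $V_N(L_t(f))$ because decreasing families of convex bodies converge to their intersection, and the right limit is $V_N(H_t)$ because increasing bounded families converge to the closure of their union), the two counts are the same fact seen through the Hausdorff metric. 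Your stratification by the finitely many jumps of $t\mapsto\dim L_t(f)$, followed by the same argument with $V_d$ on the common affine hull of each stratum, is the paper's induction in different clothes. One genuine caveat: the auxiliary gauges you float as alternatives --- inradius, diameter, $\mathrm{dist}(0,\partial L_t(f))$ --- do \emph{not} detect $H_t\subsetneq L_t(f)$: dent the middle of an edge of a square away from its inball and away from its diametral corners and you change the body without changing either inradius or diameter, so that route would fail. Only the $d$-dimensional volume on the correct stratum (or the paper's disjoint-positive-measure argument) works, and you should make explicit the small lemma that two nested convex bodies of the same dimension $d$ with equal $V_d$ coincide. Finally, $L_t(f)=\bigcap_{s<t}L_s(f)$ needs no upper semicontinuity; it is immediate from the definition of the level sets.
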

\begin{proof}
We fix $t>0$ and we define
$$
\Omega_{t}(f)=\{x\in\mathbb{R}^{N}:\ f(x)>t\},\quad
H_{t}(f)=\cl(\Omega_t(f)).
$$
$\Omega_{t}(f)$ is a convex set for all $t>0$, indeed
$$
\Omega_t=\bigcup_{k\in\N}L_{t+1/k}(f).
$$
Consequently $H_{t}$ is a convex body and $H_{t}\subseteq L_{t}(f)$. We define $D_{t}=L_{t}(f)\setminus H_{t}$; our aim is now to prove
that the set of all $t>0$ such that $D_t\ne\varnothing$ is at most countable. We first note that if $K$ and $L$ are convex bodies with 
$K\subset L$ and $L\setminus K\ne\varnothing$ then $\interno(L\setminus K)\ne\varnothing$, therefore
\begin{equation}\label{dim. lemma 1}
D_t\ne\varnothing\quad\Leftrightarrow\quad V_N(D_t)>0.
\end{equation}
It follows from 
$$
D_t=L_{t}(f)\setminus H_{t}\subseteq L_{t}(f)\setminus \Omega_{t}(f)=\{x\in\mathbb{R}^{N}:\ f(x)=t\},
$$
that
\begin{equation}\label{e2}
t_{1}\neq t_{2}\ \Rightarrow\ D_{t_{1}}(f)\cap D_{t_{2}}(f)=\varnothing.
\end{equation}

For the rest of the proof we proceed by induction on $N$. For $N=1$, we observe that if $f$ is identically zero, then the lemma is trivially true. 
If $\textnormal{supp}(f)=\{x_{0}\}$ and $f(x_{0})=t_{0}>0$, then we have
$$
L_{t}(f)=\{x_0\}=\cl(\Omega_{t}(f))\quad \forall\,t>0,\ t\neq t_{0},
$$
and in particular the lemma is true. We suppose next that $\interno(\supp(f))\neq\varnothing$; let $t_{0}>0$ be a number such that 
$\textnormal{dim}(L_{t}(f))=1$, for all $t\in(0,t_{0})$ and $\textnormal{dim}(L_{t}(f))=0$, for all $t>t_{0}$.
Moreover, let $t_{1}=\max_{\mathbb{R}}f\ge t_0$. We observe that
$$
L_{t}(f)=\cl(\Omega_{t}(f))=\varnothing\quad \forall\, t>t_{1}\quad
\mbox{and}\quad
L_{t}(f)=\cl(\Omega_{t}(f))\quad \forall\, t\in(t_{0},t_{1}).
$$
Next we deal with values of $t\in(0,t_0)$. 
Let us fix $\epsilon>0$ and let $K$ be a compact set in $\R$ such that $K\supseteq L_{t}(f)$ for every $t\geq\epsilon$. We define, for $i\in\mathbb{N}$,
$$
T^{\epsilon}_{i}=\left\{t\in[\epsilon,t_{0}) :\ V_{1}(D_t)\geq\frac{1}{i}\right\}.
$$
As $D_t\subseteq K$ for all $t\ge\epsilon$ and taking \eqref{e2} into account we obtain that 
$T^{\epsilon}_{i}$ is finite . So 
$$
T^{\epsilon}=\bigcup_{i\in\mathbb{N}}T^{\epsilon}_{i}
$$
is countable for every $\epsilon>0$. By \eqref{dim. lemma 1}
$$
\{
t\ge\epsilon\,:\,D_t\ne\varnothing
\}\quad\mbox{is countable}
$$
for every $\epsilon>0$, so that 
$$
\{
t>0\,:\,D_t\ne\varnothing
\}
$$
is also countable. The proof for $N=1$ is complete.

Assume now that the claim of the lemma is true up to dimension $(N-1)$, and let us prove in dimension $N$. 
If the dimension of $\supp(f)$ is strictly smaller than $N$, then (as $\supp(f)$ is convex)
there exists an affine subspace $H$ of $\R^N$, of dimension $(N-1)$, containing $\supp(f)$. In this case the assert of the lemma follows 
applying the induction assumption to the restriction of $f$ to $H$. Next, we suppose that there exists $t_{0}>0$ such that
$$
\dim(L_{t}(f))=N,\quad \forall\, t\in(0,t_{0})
$$
and
$$
\dim(L_{t}(f))<N,\quad\forall\, t>t_{0}.
$$
By the same argument used in the one-dimensional case we can prove that 
$$
\{t\in(0,t_{0}):\ D_t\ne\varnothing\}
$$ 
is countable. For $t>t_{0}$, there exists a $(N-1)$-dimensional affine sub-space of $\R^N$ containing $L_t(f)$ for every $t>t_0$. 
To conclude the proof we apply the inductive hypothesis to the restriction of $f$ to this hyperplane.
\end{proof}

\begin{lemma}\label{l7}
Let $\{f_{i}\}_{i\in\mathbb{N}}\subseteq{\mathcal C}^{N}$ and $f\in {\mathcal C}^{N}$. Assume that $f_{i}\nearrow f$ point-wise in $\R^N$ as $i\rightarrow+\infty$. 
Then, for all $t>0$, except at most for countably many values,
$$
\lim_{i\to\infty}L_t(f_i)=L_t(f).
$$ 
\end{lemma}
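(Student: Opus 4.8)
The plan is to exploit the monotone convergence $f_i \nearrow f$ to squeeze $L_t(f_i)$ between two nested families of convex bodies whose limits agree for all but countably many $t$, and then invoke Lemma \ref{lemma tecnico 1} to identify that common limit with $L_t(f)$. First I would observe that since $f_i \le f_{i+1} \le f$ pointwise, the super-level sets are nested: $L_t(f_i) \subseteq L_t(f_{i+1}) \subseteq L_t(f)$ for every $t>0$. Hence for each fixed $t$ the sequence $\{L_t(f_i)\}_i$ is an increasing sequence of convex bodies all contained in the compact set $L_t(f_1)$ (once this is nonempty; for $t$ above the relevant threshold everything is eventually empty and the claim is trivial). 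An increasing sequence of convex bodies contained in a fixed convex body converges in the Hausdorff metric to the closure of its union; call this limit $K_t := \cl\!\left(\bigcup_i L_t(f_i)\right)$, which is a convex body with $K_t \subseteq L_t(f)$.

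The heart of the matter is the reverse inclusion $L_t(f) \subseteq K_t$ for all but countably many $t$. Here I would compute $\bigcup_i L_t(f_i)$ from below: if $f(x) > t$, then by pointwise convergence there is some $i$ with $f_i(x) > t \ge t$, so $x \in L_t(f_i) \subseteq \bigcup_i L_t(f_i)$; therefore
$$
\Omega_t(f) := \{x : f(x) > t\} \;\subseteq\; \bigcup_i L_t(f_i) \;\subseteq\; L_t(f),
$$
and taking closures gives $\cl(\Omega_t(f)) \subseteq K_t \subseteq L_t(f)$. Now Lemma \ref{lemma tecnico 1} applied to $f$ says that $L_t(f) = \cl(\Omega_t(f))$ for all $t>0$ outside a countable set; for every such $t$ the two outer terms coincide, forcing $K_t = L_t(f)$. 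Combining this with the Hausdorff convergence $L_t(f_i) \to K_t$ established above yields $\lim_{i\to\infty} L_t(f_i) = L_t(f)$ for all $t$ outside that countable exceptional set.

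The main obstacle — really the only delicate point — is justifying that an increasing sequence of convex bodies contained in a fixed convex body actually converges in the Hausdorff metric to the closure of its union, and handling the bookkeeping when some $L_t(f_i)$ are empty while the limit is not (or vice versa). For the first issue one can argue directly: given $\varepsilon>0$, cover the compact set $K_t$ by finitely many balls of radius $\varepsilon/2$ centered at points of $\bigcup_i L_t(f_i)$ (possible since this union is dense in $K_t$), pick $i$ large enough that $L_t(f_i)$ contains all those centers, and conclude $K_t \subseteq (L_t(f_i))_\varepsilon$ while trivially $L_t(f_i) \subseteq K_t$. For the emptiness issue, note that if $L_t(f) \ne \varnothing$ then $M(f) \ge t$; picking a point $x_0$ with $f(x_0) > t$ when $t < M(f)$ (such a point exists for $t$ not equal to $M(f)$, by upper semicontinuity and the existence of the maximum) shows $L_t(f_i) \ne \varnothing$ for $i$ large, so the nonempty convex bodies are compared only with nonempty ones; the single value $t = M(f)$ is absorbed into the countable exceptional set. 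Everything else is a routine assembly of Lemma \ref{l1}, Lemma \ref{lemma tecnico 1}, and the monotonicity of super-level sets under $f_i \nearrow f$.
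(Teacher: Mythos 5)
Your proposal is correct and follows essentially the same route as the paper: use monotonicity to get an increasing sequence of convex bodies with limit $K_t=\cl\bigl(\bigcup_i L_t(f_i)\bigr)\subseteq L_t(f)$, observe that $\{x: f(x)>t\}\subseteq\bigcup_i L_t(f_i)$ by pointwise convergence, and invoke Lemma \ref{lemma tecnico 1} to conclude $K_t=L_t(f)$ outside a countable exceptional set. The only difference is that you spell out the Hausdorff convergence of increasing sequences of convex bodies and the emptiness bookkeeping, which the paper takes for granted.
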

\begin{proof}
For every $t>0$, the sequence of convex bodies $L_t(f_i)$, $i\in\N$, is increasing and $L_t(f_i)\subset L_t(f)$ for every $i$. 
In particular this sequence admits a limit $L_t\subset L_t(f)$. We choose $t>0$ such that
$$
L_t(f)=\cl(\{x\in\R^N\,:\,f(x)>t\}).
$$
By the previous lemma we know that this condition holds for every $t$ except at most countably many values. It is clear that for every $x$ s.t. $f(x)>t$ we have 
$x\in L_t$, hence $L_t\supset\{x\in\R^N\,:\,f(x)>t\}$; on the other hand, as $L_t$ is closed, we have that $L_t\supset L_t(f)$. Hence $L_t=L_t(f)$ and the 
proof is complete.
\end{proof}

\begin{lemma}\label{lemma tecnico 3}
Let $\{f_{i}\}_{i\in\mathbb{N}}\subseteq{\mathcal C}^{N}$ and $f\in {\mathcal C}^{N}$. Assume that $f_{i}\searrow f$ point-wise in $\R^N$ as $i\rightarrow+\infty$. 
Then for all $t>0$
$$
\lim_{i\to\infty}L_t(f_i)=L_t(f).
$$ 
\end{lemma}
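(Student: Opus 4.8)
The plan is to use monotonicity much more directly than in the increasing case: a decreasing sequence of functions produces a \emph{nested decreasing} sequence of super-level sets, and for such a sequence Hausdorff convergence to the intersection is automatic and requires no exceptional set of levels $t$.

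First I would fix $t>0$ and note that, since $f_i\searrow f$, for every $i$ we have $L_t(f_i)\supseteq L_t(f_{i+1})\supseteq L_t(f)$; thus $\{L_t(f_i)\}_{i\in\N}$ is a decreasing sequence, each term being a convex body or empty. Next I would identify its intersection: $x\in\bigcap_{i}L_t(f_i)$ means $f_i(x)\ge t$ for all $i$, i.e.\ $\inf_i f_i(x)\ge t$, and since the sequence $f_i(x)$ is decreasing this is the same as $f(x)=\lim_i f_i(x)\ge t$. Hence $\bigcap_i L_t(f_i)=L_t(f)$.

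Then I would distinguish two cases. If $L_t(f)=\varnothing$, then $\bigcap_i L_t(f_i)=\varnothing$; since the $L_t(f_i)$ are nested and compact, the finite intersection property forces $L_t(f_{i_0})=\varnothing$ for some $i_0$, hence $L_t(f_i)=\varnothing$ for all $i\ge i_0$, and the conclusion is trivial. If instead $L_t(f)\ne\varnothing$, every $L_t(f_i)\supseteq L_t(f)$ is a nonempty convex body, and the statement reduces to the general fact that a decreasing sequence of nonempty compact sets converges, in the Hausdorff metric, to $K:=\bigcap_i L_t(f_i)$. For this I would argue as follows: since $K\subseteq L_t(f_i)$, the term $\sup_{y\in K}{\rm dist}(y,L_t(f_i))$ vanishes, so I only need $\sup_{x\in L_t(f_i)}{\rm dist}(x,K)\to0$. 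If this failed, there would be $\varepsilon>0$ and points $x_{i_j}\in L_t(f_{i_j})$ with ${\rm dist}(x_{i_j},K)\ge\varepsilon$; all of them lie in the compact set $L_t(f_1)$, so a subsequence converges to some $\bar x$ with ${\rm dist}(\bar x,K)\ge\varepsilon$, yet for each fixed $n$ all but finitely many of the $x_{i_j}$ belong to the closed set $L_t(f_n)$, whence $\bar x\in\bigcap_n L_t(f_n)=K$ --- a contradiction.

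I do not expect a real obstacle here: the only points needing some care are the bookkeeping in the empty case and the compactness argument for Hausdorff convergence of a nested family, the latter being a standard fact that could also be quoted from \cite{Schneider}. The conceptual reason this lemma is cleaner than Lemma~\ref{l7} is precisely that decreasing convergence of the $f_i$ never discards boundary points of the level sets, so --- unlike in the increasing case --- no countable set of exceptional values $t$ appears.
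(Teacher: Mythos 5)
Your proof is correct and follows essentially the same route as the paper: identify $\bigcap_i L_t(f_i)$ with $L_t(f)$ and invoke the convergence of a nested decreasing sequence of compact (convex) sets to its intersection. The only difference is cosmetic: the paper quotes Lemma 1.8.1 of \cite{Schneider} for that convergence, while you prove it from scratch and also spell out the case where the level sets become empty.
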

\begin{proof}
The sequence $L_t(f_i)$ is decreasing and its limit, denoted by $L_t$, contains $L_t(f)$. On the other hand, as now
$$
L_t=\bigcap_{k\in\N} L_t(f_k)
$$
(see Lemma 1.8.1 of \cite{Schneider}),
if $x\in L_t$ then $f_i(x)\ge t$ for every $i$, so that $f(x)\ge t$ i.e. $x\in L_t(f)$. 
\end{proof}

\section{Valuations on $\C^N$}\label{section 4}

\begin{defn}
A functional $\mu:{\mathcal C}^{N}\rightarrow \mathbb{R}$ is said to be a valuation if
\begin{itemize}
\item $\mu(\underline{0})=0$, where $\underline{0}\in{\mathcal C}^{N}$ is the function identically equal to zero;
\item for all $f$ and $g\in {\mathcal C}^{N}$ such that $f\vee g\in{\mathcal C}^{N}$, we have
$$
\mu(f)+\mu(g)=\mu(f\vee g)+\mu(f\wedge g).
$$
\end{itemize}
\end{defn}

A valuation $\mu$ is said to be rigid motion invariant, or simply invariant, if for every rigid motion $T:\mathbb{R}^{N}\rightarrow \mathbb{R}^{N}$ 
and for every $f\in{\mathcal C}^{N}$, we have
$$
\mu(f)=\mu(f\circ T).
$$
In this paper we will always consider invariant valuations. We will also need a notion of continuity which is expressed by the following definition.

\begin{defn}
A valuation $\mu$ is said to be continuous if for every sequence $\{f_{i}\}_{i\in\mathbb{N}}\subseteq {\mathcal C}^{N}$ and $f\in{\mathcal C}^{N}$ such that 
$f_i$ converges point-wise to $f$ in $\R^N$, and $f_i$ is either monotone increasing or decreasing w.r.t. $i$, we have
$$
\mu(f_{i})\rightarrow\mu(f),\ for\ i\rightarrow +\infty.
$$
\end{defn}

To conclude the list of properties that a valuation may have and that are relevant to our scope, we say that a valuation $\mu$ is monotone increasing
(resp. decreasing) if, given $f,g\in\C^N$, 
$$
\mbox{$f\le g$ point-wise in $\R^N$ implies $\mu(f)\le\mu(g)$ (resp. $\mu(f)\ge\mu(g)$).}
$$

\subsection{A brief discussion on the choice of the topology in $\C^N$}\label{discussion on topology}

A natural choice of a topology in $\C^N$ would be the one induced by point-wise convergence. Let us see that this choice would too restrictive,
with respect to the theory of continuous and rigid motion invariant (but translations would be enough) valuations. Indeed, any translation invariant valuation
$\mu$ on $\C^N$ such that
$$
\lim_{i\to\infty}\mu(f_i)=\mu(f)
$$
for every sequence $f_i$, $i\in\N$, in $\C^N$, converging to some $f\in\C^N$ point-wise, must be the valuation constantly equal to 0. To prove this claim,
let $f\in\C^N$ have compact support, let $e_1$ be the first vector of the canonical basis of $\R^N$ and set
$$
f_i(x)=f(x-i\,e_1)\quad\forall\, x\in\R^N,\quad\forall\, i\in\N.
$$
The sequence $f_i$ converges point-wise to the function $f_0\equiv0$ in $\R^N$, so that, by translation invariance, and as $\mu(f_0)=0$,
we have $\mu(f)=0$. Hence $\mu$ vanishes on each function $f$ with compact support. On the other hand every element of $\C^N$ is the point-wise limit
of a sequence of functions in $\C^N$ with compact support. Hence $\mu\equiv0$. 

\medskip

A different choice could be based on the following consideration: we have seen that $\C^N\subset L^\infty(\R^N)$, hence it inherits the topology of this space.
In \cite{Cavallina}, Cavallina studied translation invariant and continuous valuations on $L^\infty(\R^N)$. In particular he proved that there exists non-trivial
translation invariant and continuous valuations on this space, which vanishes on functions with compact support. In particular they can not be written 
in integral form as those found in the present paper. Nothing that in dimension $N=1$ translation and rigid motion invariance provide basically the same
condition, this suggests that the choice of the topology on $L^\infty(\R^N)$ on $\C^N$ would lead us to a completely different type of valuations.

\section{Integral valuations}\label{section 5}
A class of examples of invariant valuations, which will be crucial for our characterization results, is that of integral valuations.

\subsection{Continuous integral valuations}\label{sec. Continuous and integral valuations}

Let $k\in\{0,\dots,N\}$. For $f\in\C^N$, consider the function
$$
t\,\rightarrow\,u(t)=V_k(L_t(f))\,\quad
t>0.
$$
This is a decreasing function, which vanishes for $t>M(f)=\max_{\R^N}f$. In particular $u$ has bounded variation in
$[\delta,M(f)]$ for every $\delta>0$, hence there exists a Radon measure defined in $(0,\infty)$, that we will denote by 
$S_k(f;\cdot)$, such that 
$$
\mbox{$-S_k(f;\cdot)$ is the distributional derivative of $u$}
$$
(see, for instance, \cite{AFP}). Note that, as $u$ is decreasing, we have put a minus sign in this definition to have a non-negative measure.
The support of $S_k(f;\cdot)$ is contained in $[0,M(f)]$. 

Let $\phi$ be a continuous function defined on $[0,\infty)$, such that $\phi(0)=0$. We consider the functional on $\C^N$ defined by
\begin{equation}\label{integral valuation 1}
\mu(f)=\int_{(0,\infty)}\phi(t) dS_k(f;t)\quad f\in\C^N.
\end{equation}
The aim of this section is to prove that this is a continuous and invariant valuation on $\C^N$. As a first step,
we need to find some condition on the function $\phi$ which guarantee that the above integral is well defined for every $f$. 

Assume that
\begin{equation}\label{int cond 2}
\mbox{$\exists\,\delta>0$ s.t. $\phi(t)=0$ for every $t\in[0,\delta]$.}
\end{equation}
Then 
\begin{eqnarray*}
\int_{(0,\infty)}\phi_+(t) dS_k(f;t)&=&
\int_{[\delta,M(f)]}\phi_+(t)\,dS_k(f,t)\\
&\le& M\,\left(
V_k(L_\delta(f))-V_k(M(f))\right)<\infty,
\end{eqnarray*}
where $M(f)=\max_{\R^N}f$, $M=\max_{[\delta,\max_{\R^N}f]}\phi_+$ and $\phi_+$ is the positive part of $\phi$. Analogously we can prove that
the integral of the negative part of $\phi$, denoted by $\phi_-$, is finite, so that $\mu$ is well defined. 

We will prove that, for $k\ge1$, condition \eqref{int cond 2} is necessary as well. Clearly, if $\mu(f)$ is well defined 
(i.e. is a real number) for every $f\in\C^N$, then
$$
\int_{(0,\infty)}\phi_+(t) dS_k(f;t)<\infty\quad
\mbox{and}\quad
\int_{(0,\infty)}\phi_-(t) dS_k(f;t)<\infty\quad
\forall\,f\in\C^N.
$$
Assume that $\phi_+$ does not vanish identically in any right neighborhood of the origin. Then we have
$$
\psi(t):=\int_0^t \phi_+(\tau)\,d\tau>0\quad\forall\,t>0.
$$
The function 
$$
t\,\rightarrow\,h(t)=\int_t^1\frac1{\psi(s)}ds,\quad t\in(0,1],
$$
is strictly decreasing. As $k\ge 1$, we can construct a function $f\in\C^N$ such that
\begin{equation}\label{relazione inversa}
V_k(L_t(f))=h(t)\quad\mbox{for every $t>0$.}
\end{equation}
Indeed, consider a function of the form
$$
f(x)=w(\|x\|),\quad x\in\R^N,
$$
where $w\in C^1([0,+\infty))$ is positive and strictly decreasing. Then $f\in C^N$ and
$L_t(f)=B_{r(t)}$, where 
$$
r(t)=w^{-1}(t)
$$
for every $t\in(0,f(0)]$ (note that $f(0)=M(f)$). Hence
$$
V_k(L_t(f))=c\,(w^{-1}(t))^k
$$
where $c$ is a positive constant depending on $k$ and $N$. Hence if we choose
$$
w=\left[
\left(
\frac1c\,h
\right)^{1/k}
\right]^{-1},
$$
\eqref{relazione inversa} is verified. Hence
$$
dS_k(f;t)=\frac1{\psi(t)}dt,
$$
and
$$
\int_{(0,\infty)}\phi_+(t)dS_k(f;t)=
\int_{(0,M(f))}\frac{\psi'(t)}{\psi(t)}dt=\infty.
$$
In the same way we can prove that $\phi_-$ must vanish in a right neighborhood of the origin. We have proved the following result.

\begin{lemma} Let $\phi\in C([0,\infty))$ and $k\in\{1,\dots,N\}$. Then $\phi$ has finite integral with respect to the measure
$S_k(f;\cdot)$ for every $f\in\C^N$ if and only if $\phi$ verifies \eqref{int cond 2}.
\end{lemma}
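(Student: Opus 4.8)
The plan is to treat the two implications separately. The \emph{if} direction is a direct estimate. Assume $\phi$ satisfies \eqref{int cond 2}, say $\phi\equiv0$ on $[0,\delta]$, and fix $f\in\C^N$. Since $S_k(f;\cdot)$ is supported on $[0,M(f)]$, the integral vanishes when $M(f)\le\delta$; otherwise $\phi$ is bounded on the compact interval $[\delta,M(f)]$, and the mass of $S_k(f;\cdot)$ there is at most $V_k(L_\delta(f))-\lim_{t\to\infty}V_k(L_t(f))\le V_k(L_\delta(f))<\infty$, whence
$$
\int_{(0,\infty)}|\phi(t)|\,dS_k(f;t)=\int_{[\delta,M(f)]}|\phi(t)|\,dS_k(f;t)\le\Big(\max_{[\delta,M(f)]}|\phi|\Big)\,V_k(L_\delta(f))<\infty .
$$
This is exactly the bound already carried out above for $\phi_+$ and $\phi_-$ separately.

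For the \emph{only if} direction I argue by contraposition. Suppose $\phi$ does not satisfy \eqref{int cond 2}, i.e. $\phi|_{[0,\delta]}$ is not identically zero for any $\delta>0$. Then at least one of $\phi_+,\phi_-$ fails to vanish on every right neighbourhood of the origin — for if $\phi_+\equiv0$ on $[0,\delta_+]$ and $\phi_-\equiv0$ on $[0,\delta_-]$, then $\phi\equiv0$ on $[0,\min(\delta_+,\delta_-)]$ — and, replacing $\phi$ by $-\phi$ if necessary, we may assume it is $\phi_+$. Put $\psi(t)=\int_0^t\phi_+(\tau)\,d\tau$; this is of class $C^1$ with $\psi'=\phi_+$, is non-decreasing, and satisfies $\psi(t)>0$ for every $t>0$. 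Since $\phi_+$ is continuous it is bounded near $0$, so $\psi(s)\le Cs$ for small $s$ and hence $\int_0^1 ds/\psi(s)=\infty$; therefore
$$
h(t)=\int_t^1\frac{ds}{\psi(s)},\qquad t\in(0,1],
$$
is a continuous, strictly decreasing bijection of $(0,1]$ onto $[0,\infty)$ with $h'(t)=-1/\psi(t)$.

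Now I exploit the hypothesis $k\ge1$ to realise $h$ as the profile $t\mapsto V_k(L_t(f))$. For any positive, strictly decreasing continuous $w$ on $[0,\infty)$ with $w(r)\to0$, the radial function $f(x)=w(\|x\|)$ lies in $\C^N$ and $L_t(f)=B_{w^{-1}(t)}$, so $V_k(L_t(f))=c\,(w^{-1}(t))^k$ for a positive dimensional constant $c$. Choosing $w=[(h/c)^{1/k}]^{-1}$ — legitimate precisely because $(h/c)^{1/k}$ is a strictly decreasing continuous bijection $(0,1]\to[0,\infty)$ — gives $V_k(L_t(f))=h(t)$ on $(0,1]$ and $M(f)=w(0)=1$; since the decreasing function $t\mapsto V_k(L_t(f))$ is $C^1$ on $(0,1)$ and has no jump at $t=1$, $dS_k(f;t)=-dh=\frac{1}{\psi(t)}\,dt$ on $(0,1)$. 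Consequently
$$
\int_{(0,\infty)}\phi_+(t)\,dS_k(f;t)=\int_0^1\frac{\psi'(t)}{\psi(t)}\,dt=\log\psi(1)-\lim_{t\to0^+}\log\psi(t)=+\infty ,
$$
because $\psi(0^+)=0$; thus $\int_{(0,\infty)}\phi\,dS_k(f;\cdot)$ is not a finite real number, which shows \eqref{int cond 2} is necessary.

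The genuinely delicate point is this last construction: one must check that $w=[(h/c)^{1/k}]^{-1}$ is defined on all of $[0,\infty)$, positive, strictly decreasing and vanishing at infinity (so that $f\in\C^N$ with the stated level sets), and that the distributional derivative of $t\mapsto h(t)$ carries no atomic part. Everything here reduces to the elementary observation that continuity of $\phi_+$ forces $\psi(s)\le Cs$ near $0$, hence $h$ to be a strictly decreasing continuous bijection onto $[0,\infty)$; once that is in place the rest is bookkeeping already done in the discussion preceding the statement.
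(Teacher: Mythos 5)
Your proof is correct and follows essentially the same route as the paper: the same direct estimate for the sufficiency of \eqref{int cond 2}, and for necessity the same radial construction $f(x)=w(\|x\|)$ with $V_k(L_t(f))=h(t)=\int_t^1 \psi(s)^{-1}\,ds$, which forces $\int\phi_+\,dS_k(f;\cdot)=\int_0^1\psi'/\psi\,dt=+\infty$ because $\psi(0^+)=0$. Your added observation that continuity of $\phi_+$ gives $\psi(s)\le Cs$ near $0$, hence $h(0^+)=+\infty$ and the inverse $w$ is genuinely defined on all of $[0,\infty)$, is a small verification the paper leaves implicit, and it is worth recording.
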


In the special case $k=0$, as the intrinsic volume $V_0$ is the Euler characteristic,
$$
u(t)=
\left\{
\begin{array}{lll}
1\quad&\mbox{if $0<t\le M(f)$,}\\
0\quad&\mbox{if $t>M(f)$.}
\end{array}
\right.
$$
That is, $S_0$ is the Dirac point mass measure concentrated at $M(f)$ and $\mu$ can be written as
$$
\mu(f)=\phi(M(f))\quad\forall, f\in\C^N.
$$ 

Next we show that \eqref{integral valuation 1} defines a continuous and invariant valuation. 

\begin{proposition}\label{proposition integral valuations 1} 
Let $k\in\{0,\dots,N\}$ and $\phi\in C([0,\infty))$ be such that $\phi(0)=0$. If $k\ge1$ assume that 
\eqref{int cond 2} is verified. Then \eqref{integral valuation 1} defines an invariant and continuous valuation on $\C^N$. 
\end{proposition}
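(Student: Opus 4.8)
The plan is to check the three defining properties in turn, each time pulling the corresponding property of $\mu$ back to a property of the intrinsic volumes on $\K^N$ through the level-set identities of Lemma~\ref{l1}, and to reserve the real work for continuity.

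\textbf{Invariance and the valuation identity.} If $T$ is a rigid motion then $L_t(f\circ T)=T^{-1}(L_t(f))$ for every $t>0$, so by the rigid-motion invariance of $V_k$ (Proposition~\ref{intrinsic volumes}) the decreasing functions $t\mapsto V_k(L_t(f\circ T))$ and $t\mapsto V_k(L_t(f))$ coincide; hence so do their distributional derivatives, $S_k(f\circ T;\cdot)=S_k(f;\cdot)$, and $\mu(f\circ T)=\mu(f)$. For additivity, let $f,g\in\C^N$ with $f\vee g\in\C^N$. By Lemma~\ref{l1} the four sets $L_t(f\vee g)=L_t(f)\cup L_t(g)$, $L_t(f\wedge g)=L_t(f)\cap L_t(g)$, $L_t(f)$, $L_t(g)$ all lie in $\K^N\cup\{\varnothing\}$ for every $t>0$, and since $V_k$ is a valuation on $\K^N$ with $V_k(\varnothing)=0$ (the cases with an empty level set being trivial) one obtains the pointwise identity $V_k(L_t(f\vee g))+V_k(L_t(f\wedge g))=V_k(L_t(f))+V_k(L_t(g))$ for all $t>0$. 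Differentiating in the sense of distributions (a linear operation) gives $S_k(f\vee g;\cdot)+S_k(f\wedge g;\cdot)=S_k(f;\cdot)+S_k(g;\cdot)$ as measures on $(0,\infty)$, and integrating $\phi$ against both sides — all integrals being finite by the discussion preceding the statement, using \eqref{int cond 2} when $k\ge1$ — yields $\mu(f\vee g)+\mu(f\wedge g)=\mu(f)+\mu(g)$. Finally $\mu(\underline 0)=0$ since $L_t(\underline 0)=\varnothing$ for every $t>0$.

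\textbf{Continuity.} The case $k=0$ is separate and elementary: since $V_0\equiv1$ on $\K^N$ one has $S_0(h;\cdot)=\delta_{M(h)}$ and $\mu(h)=\phi(M(h))$, so it suffices to show $M(f_i)\to M(f)$ for a monotone sequence $f_i\to f$, which follows by evaluating at a maximum point of $f$ in the increasing case, and in the decreasing case by noting that if $\lim_iM(f_i)>t>M(f)$ then $\bigcap_iL_t(f_i)$ is a decreasing intersection of non-empty compact sets, hence non-empty, contradicting $\bigcap_iL_t(f_i)=L_t(f)=\varnothing$; continuity of $\phi$ then finishes this case. Assume now $1\le k\le N$ and fix a monotone sequence $f_i\to f$; set $M'=\sup_iM(f_i)+1$, finite because $M(f_i)\le M(f_1)$ (decreasing) or $M(f_i)\le M(f)$ (increasing). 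Writing $u_h(t)=V_k(L_t(h))$, the idea is to exploit the integration-by-parts identity
\[
\int_{(0,\infty)}\phi(t)\,dS_k(h;t)=\int_0^{\infty} u_h(t)\,\phi'(t)\,dt ,
\]
valid for any $h\in\C^N$ whenever $\phi\in C^1([0,\infty))$ vanishes on $[0,\delta']$ for some $\delta'>0$ (both integrands are then supported in $[\delta',M(h)]$ and $u_h$ is of bounded variation on $[\delta'/2,\infty)$, so this is the standard integration by parts for $BV$ functions, cf. \cite{AFP}). Since $\phi\equiv0$ on $[0,\delta]$, I would approximate $\phi$ uniformly on $[0,M']$ by functions $\phi_n\in C^1([0,\infty))$ with $\phi_n\equiv0$ on $[0,\delta/2]$ (e.g. by mollification with a kernel of radius $<\delta/2$). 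For each fixed $n$ the displayed identity reduces $\mu_{\phi_n}(f_i)$ to $\int_0^\infty u_{f_i}\phi_n'\,dt$; there $\phi_n'$ is bounded and supported in $[\delta/2,M']$, while $u_{f_i}$ is uniformly bounded on that interval and converges to $u_f$ for all $t$ by Lemma~\ref{lemma tecnico 3} and continuity of $V_k$ in the decreasing case, and for all but countably many $t$ by Lemma~\ref{l7} and continuity of $V_k$ in the increasing case; hence the dominated convergence theorem gives $\mu_{\phi_n}(f_i)\to\mu_{\phi_n}(f)$ as $i\to\infty$.

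It then remains to let $n\to\infty$. The measures $S_k(f_i;\cdot)$ restricted to $[\delta/2,\infty)$ have total mass bounded uniformly in $i$ — by $V_k(L_{\delta/4}(f_1))$ in the decreasing case and by $V_k(L_{\delta/4}(f))$ in the increasing case, both finite because $L_{\delta/4}(\cdot)$ is a convex body — so
\[
|\mu_\phi(h)-\mu_{\phi_n}(h)|\le \|\phi-\phi_n\|_{\infty,[\delta/2,M']}\cdot S_k\bigl(h;[\delta/2,\infty)\bigr)\longrightarrow 0
\]
as $n\to\infty$, uniformly over $h\in\{f_i:i\in\N\}\cup\{f\}$; a standard three-term splitting combines this with $\mu_{\phi_n}(f_i)\to\mu_{\phi_n}(f)$ to give $\mu_\phi(f_i)\to\mu_\phi(f)$. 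I expect the only genuine difficulty to be this continuity argument: the conceptual ingredients (Hausdorff continuity and monotonicity of $V_k$, monotone convergence of level sets) are already available, and the work lies in handling the merely continuous — not $C^1$ — regularity of $\phi$ via the approximation above and in securing the uniform bounds that make both the dominated convergence and the approximation step legitimate, all of which ultimately rest on the compactness of $L_\delta(h)$ for $\delta>0$ together with \eqref{int cond 2}.
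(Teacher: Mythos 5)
Your proposal is correct and follows essentially the same route as the paper: invariance and the valuation identity are pulled back to the corresponding properties of $V_k$ on level sets via Lemma~\ref{l1}, and continuity is obtained from the a.e.\ convergence $u_{f_i}\to u_f$ (Lemmas~\ref{l7} and~\ref{lemma tecnico 3}) together with a uniform variation bound yielding weak convergence of the measures $S_k(f_i;\cdot)$. The only difference is that you unpack that weak-convergence step explicitly (smooth approximation of $\phi$, dominated convergence, and a three-term estimate) and you supply the $M(f_i)\to M(f)$ argument that the paper leaves as an exercise for $k=0$ — both are fine.
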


\begin{proof} For every $f\in\C^N$ we define the function $u_f\,:\,[0,M(f)]\to\R$ as
$$
u_f(t)=V_k(L_t(f)).
$$
As already remarked, this is a decreasing function. In particular it has bounded variation in $[\delta,M(f)]$.
Let $\phi_i$, $i\in\N$, be a sequence of functions in $C^\infty([0,\infty))$, with compact support, converging uniformly to $\phi$ on compact sets. 
As $\phi\equiv0$ in $[0,\delta]$, we may assume that the same holds for every $\phi_i$. 
Then we have
$$
\mu(f)=\lim_{i\to\infty}\mu_i(f),
$$
where
$$
\mu_i(f)=\int_{[0,\infty)}\phi_i(t)dS_k(f;t)\quad\forall\, f\in\C^N.
$$
By the definition of distributional derivative of a monotone function we have, for every $f$ and for every $i$:
$$
\int_{[0,\infty)}\phi_i(t)dS_k(f;t)=
\int_{[0,\infty)}u_f(t)\phi_i'(t)dt=
\int_{[0,M(f)]}V_k(L_t(f))\phi_i'(t)dt.
$$
On the other hand, if $f,g\in\C^N$ are such that $f\vee g\in\C^N$, for every $t>0$
\begin{equation}\label{inclusion-exclusion for functions}
L_t(f\vee g)=L_t(f)\cup L_t(g),\quad
L_t(f\wedge g)=L_t(f)\cap L_t(g).
\end{equation}
As intrinsic volumes are valuations
$$
V_k(L_t(f\vee g))+V_k(L_t(f\wedge g))=
V_k(L_t(f))+V_k(L_t(g)).
$$
Multiplying both sides times $\phi'_i(t)$ and integrating on $[0,\infty)$ we obtain
$$
\mu_i(f\vee g)+\mu_i(f\wedge g)=\mu_i(f)+\mu_i(g).
$$
Letting $i\to\infty$ we deduce the valuation property for $\mu$. 

In order to prove the continuity of $\mu$, we first consider the case $k\ge 1$. 
Let $f_i, f\in\C^N$, $i\in\N$, and assume that the sequence
$f_i$ is either increasing or decreasing with respect to $i$, and it converges point-wise to $f$ in $\R^N$.
Note that in each case there exists a constant $M>0$ such that $M(f_i), M(f)\le M$ for every $i$. 
Consider now the sequence of functions $u_{f_i}$. By the monotonicity of the sequence $f_i$, and that of 
intrinsic volumes, this is a monotone sequence of decreasing functions, and it converges a.e. to $u_f$ in 
$(0,\infty)$, by Lemmas \ref{l7} and \ref{lemma tecnico 3}. In particular the sequence $u_{f_i}$ has uniformly bounded
total variation in $[\delta, M]$. Consequently, the sequence of measures $S_k(f_i;\cdot)$, $i\in\N$, converges weakly to the measure $S_k(f;\cdot)$
as $i\to\infty$. Hence, as $\phi$ is continuous
$$
\lim_{i\to\infty}\mu(f_i)=\lim_{i\to\infty}\int_{[\delta, M]}\phi(t)\,dS_k(f_i;t)
=\int_{[0,M]}\phi(t)\,dS_k(f;t)=\mu(f).
$$

If $k=0$ then we have seen that
$$
\mu(f)=\phi(M(f))\quad\forall\,f\in\C^N.
$$
Hence in this case continuity follows from the following fact: if $f_i$, $i\in\N$, is a monotone sequence in $\C^N$ converging point-wise to 
$f$, then 
$$
\lim_{i\to\infty}M(f_i)=M(f).
$$
This is a simple exercise that we leave to the reader.

Finally, the invariance of $\mu$ follows directly from the invariance of intrinsic volumes with respect to rigid motions.
\end{proof}

\subsection{Monotone (and continuous) integral valuations}

In this section we introduce a slightly different type of integral valuations, which will be needed to characterize all possible continuous and monotone 
valuations on $\C^N$. Note that, as it will be clear in the sequel, when the involved functions are smooth enough, the two types 
(i.e. of the present and of previous section) can be reduced one to another by an integration by parts. 

Let $k\in\{0,\dots,N\}$ and let $\nu$ be a Radon measure on $\left(0,+\infty\right)$; assume that  
\begin{equation}\label{e1}
\int_{0}^{+\infty}V_{k}(L_{t}(f))d\nu(t)<+\infty,\quad \forall f\in{\mathcal C}^{N}.
\end{equation}
We will return later on explicit condition on $\nu$ such that \eqref{e1} holds. Then define the functional $\mu\,:\,\C^N\to\R$ by
\begin{equation}\label{integral valuations}
\mu(f)=\int_{0}^{+\infty}V_{k}(L_{t}(f))d\nu(t)\quad\forall\, f\in\C^N.
\end{equation}

\begin{proposition} Let $\nu$ be a Radon measure on $(0,\infty)$ which verifies \eqref{e1}; then the functional defined by \eqref{integral valuations} is a rigid motion 
invariant and monotone increasing valuation.
\end{proposition}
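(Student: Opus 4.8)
The plan is to reduce every required property of $\mu$ to the corresponding property of the intrinsic volumes $V_k$ collected in Proposition \ref{intrinsic volumes}, combined with the behaviour of the level sets $L_t$ under $\vee$, $\wedge$, rigid motions and pointwise ordering. The finiteness hypothesis \eqref{e1} already guarantees that $\mu(f)$ is a well-defined real number for every $f\in\C^N$, so only the structural properties remain to be checked.

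First I would verify that $\mu$ is a valuation. Since $L_t(\underline{0})=\varnothing$ for every $t>0$ and $V_k(\varnothing)=0$, we get $\mu(\underline{0})=0$. Let now $f,g\in\C^N$ be such that $f\vee g\in\C^N$. By Lemma \ref{l1}, for every $t>0$ we have $L_t(f\vee g)=L_t(f)\cup L_t(g)$ and $L_t(f\wedge g)=L_t(f)\cap L_t(g)$; moreover the assumption $f\vee g\in\C^N$ forces $L_t(f)\cup L_t(g)$ to be empty or a convex body, so that the valuation identity for $V_k$ on $\K^N\cup\{\varnothing\}$ is available and gives
\[
V_k(L_t(f\vee g))+V_k(L_t(f\wedge g))=V_k(L_t(f))+V_k(L_t(g)),\qquad t>0.
\]
Integrating this identity against $\nu$ and invoking \eqref{e1} (so that all four integrals are finite and no $\infty-\infty$ ambiguity occurs) yields $\mu(f\vee g)+\mu(f\wedge g)=\mu(f)+\mu(g)$.

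Next I would settle invariance and monotonicity, which are immediate. If $T$ is a rigid motion of $\R^N$, then $L_t(f\circ T)=T^{-1}(L_t(f))$ for every $t>0$; since every $V_k$ is rigid motion invariant this gives $V_k(L_t(f\circ T))=V_k(L_t(f))$ for all $t$, hence $\mu(f\circ T)=\mu(f)$. If $f,g\in\C^N$ satisfy $f\le g$ pointwise, then $L_t(f)\subseteq L_t(g)$ for every $t>0$, so the monotonicity of $V_k$ yields $V_k(L_t(f))\le V_k(L_t(g))$, and integrating against the non-negative measure $\nu$ gives $\mu(f)\le\mu(g)$.

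I do not expect any serious obstacle: the argument is entirely a transfer of the properties of $V_k$ through the level-set map $t\mapsto L_t(f)$. The only points deserving a little attention are to use \eqref{e1} before splitting the integrated identity into its four pieces, and to note that the convex-body inclusion-exclusion identity $V_k(A\cup B)+V_k(A\cap B)=V_k(A)+V_k(B)$ may legitimately be applied at level $t$ precisely because $f\vee g\in\C^N$ guarantees $L_t(f)\cup L_t(g)\in\K^N\cup\{\varnothing\}$.
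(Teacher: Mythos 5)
Your proof is correct and follows essentially the same route as the paper: integrate the level-set valuation identity $V_k(L_t(f\vee g))+V_k(L_t(f\wedge g))=V_k(L_t(f))+V_k(L_t(g))$ against $\nu$, and derive invariance and monotonicity from the corresponding properties of the intrinsic volumes via $L_t(f\circ T)=T^{-1}(L_t(f))$ and $L_t(f)\subseteq L_t(g)$. Your explicit remark that \eqref{e1} is needed before splitting the integral (to rule out $\infty-\infty$) is a point the paper leaves implicit.
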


\begin{proof} The proof that $\mu$ is a valuation follows from \eqref{inclusion-exclusion for functions} and the valuation property for intrinsic volumes, as in the 
proof of Proposition \ref{proposition integral valuations 1}. The same can be done for invariance. as for monotonicity, note that if $f,g\in\C^N$ and $f\le g$,
then 
$$
L_t(f)\subset L_t(g)\quad\forall\, t>0.
$$
Therefore, as intrinsic volumes are monotone, $V_k(L_t(f))\le V_K(L_t(g))$ for every $t>0$.
\end{proof}

If we do not impose any further assumption the valuation $\mu$ needs not to be continuous. Indeed, for example, if we fix $t=t_{0}>0$ and let 
$\nu=\delta_{t_{0}}$ be the delta Dirac measure at $t_0$; then the valuation 
$$
\mu(f)=V_{N}(L_{t_{0}}(f)),\ \forall f\in{\mathcal C}^{N},
$$ 
is not continuous. To see it, let $f=t_0 I_{B_1}$ (recall that $B_1$ is the unit ball of $\R^N$) and let
$$
f_i=t_{0}\left(1-\frac1i\right)I_{B_1}\quad\forall\,i\in\N.
$$
Then $f_i$ is a monotone sequence of elements of $\C^N$ converging point-wise to $f$ in $\R^N$. On the other hand
$$
\mu(f_i)=0\quad\forall\,i\in\N,
$$
while $\mu(f)=V_N(B_1)>0$. The next results asserts that the presence of atoms is the only possible cause of discontinuity for $\mu$. We recall that
a measure $\nu$ defined on $[0,\infty)$ is said non-atomic if $\nu(\{t\})=0$ for every $t\ge0$. 

\begin{proposition}\label{p3}
Let $\nu$ be a Radon measure on $\left(0,+\infty\right) $ such that $(\ref{e1})$ holds and let $\mu$ be the valuation defined by $(\ref{e1})$. Then the two following conditions are equivalent:
\begin{itemize}
\item[i)] $\nu$ is non-atomic,
\item[ii)] $\mu$ is continuous.
\end{itemize}
\end{proposition}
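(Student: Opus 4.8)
The plan is to prove the two implications separately, treating (ii)$\Rightarrow$(i) by contraposition along the lines of the example displayed just before the statement, and (i)$\Rightarrow$(ii) by a monotone/dominated convergence argument applied directly to the integrand $V_k(L_t(f))$.

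First, for (ii)$\Rightarrow$(i): assume $\nu$ has an atom at some $t_0>0$, say $\nu(\{t_0\})=a>0$. I would take $f=t_0\,I_{B_1}$ and $f_i=t_0(1-1/i)\,I_{B_1}$, so that $f_i\nearrow f$ pointwise on $\R^N$. Since $L_t(s\,I_{B_1})$ equals $B_1$ for $0<t\le s$ and is empty for $t>s$, one gets $\mu(f)=V_k(B_1)\,\nu((0,t_0])$ and $\mu(f_i)=V_k(B_1)\,\nu\big((0,t_0(1-1/i)]\big)$, both finite by \eqref{e1} (which forces $\nu((0,t_0])<\infty$, since $V_k(B_1)>0$). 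Continuity of $\nu$ from below then gives $\lim_i\mu(f_i)=V_k(B_1)\,\nu((0,t_0))=\mu(f)-a\,V_k(B_1)\neq\mu(f)$, because $V_k(B_1)>0$ for every $k\in\{0,\dots,N\}$ and $a>0$; hence $\mu$ fails continuity.

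Next, for (i)$\Rightarrow$(ii): assume $\nu$ is non-atomic, and let $f_i\to f$ pointwise on $\R^N$ with $f_i$ monotone in $i$. By monotonicity of intrinsic volumes the functions $t\mapsto V_k(L_t(f_i))$ are then monotone in $i$ as well. If $f_i\searrow f$, Lemma~\ref{lemma tecnico 3} gives $L_t(f_i)\to L_t(f)$ for \emph{every} $t>0$, hence $V_k(L_t(f_i))\to V_k(L_t(f))$ for every $t>0$ by continuity of $V_k$; since $0\le V_k(L_t(f_i))\le V_k(L_t(f_1))$ with the right-hand side $\nu$-integrable by \eqref{e1}, dominated convergence yields $\mu(f_i)\to\mu(f)$ (here non-atomicity is not even needed). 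If instead $f_i\nearrow f$, Lemmas~\ref{lemma tecnico 1} and \ref{l7} give $L_t(f_i)\to L_t(f)$, and hence $V_k(L_t(f_i))\to V_k(L_t(f))$, for all $t>0$ outside a countable set; the sequence $V_k(L_t(f_i))$ is increasing in $i$ and bounded above by the $\nu$-integrable function $V_k(L_t(f))$, so monotone convergence applies, and since $\nu$ is non-atomic the countable exceptional set has $\nu$-measure zero, giving $\mu(f_i)\to\int_{(0,\infty)}V_k(L_t(f))\,d\nu(t)=\mu(f)$.

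The routine points to keep an eye on are that continuity of $V_k$ with respect to the Hausdorff metric still produces the desired conclusion when some level sets are empty (this is automatic: an increasing bounded, or a decreasing, sequence of convex bodies whose limit is empty must already be eventually empty, and $V_k(\varnothing)=0$), and that the dominating functions $V_k(L_t(f))$ and $V_k(L_t(f_1))$ really are $\nu$-integrable --- but this is exactly hypothesis \eqref{e1}. The essential use of non-atomicity is confined to the increasing case, where it is precisely what lets us discard the countable set of ``bad'' levels $t$ produced by Lemma~\ref{l7}; the failure of this very step, namely an atom sitting on such a bad level, is what the counterexample in (ii)$\Rightarrow$(i) exploits, so I do not expect any real obstacle beyond organizing these observations.
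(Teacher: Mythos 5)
Your proposal is correct and follows essentially the same route as the paper: the same counterexample $f=t_0 I_{B_1}$, $f_i=t_0(1-1/i)I_{B_1}$ for the implication (ii)$\Rightarrow$(i), and for (i)$\Rightarrow$(ii) the same use of Lemmas \ref{l7} and \ref{lemma tecnico 3} to get $\nu$-a.e.\ convergence of $t\mapsto V_k(L_t(f_i))$ followed by a monotone/dominated convergence argument. Your extra remarks (that non-atomicity is only needed in the increasing case, and the handling of empty level sets) are correct refinements of the paper's more condensed argument, not a different method.
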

\begin{proof}
Suppose that {\em i)} does not hold, than there exists $t_{0}$ such that $\nu(\{t_{0}\})=\alpha>0$. Define  
$\varphi:\mathbb{R_{+}}\rightarrow\mathbb{R}$ by
$$
\varphi(t)=\int_{(0,t]}d\nu(s).
$$
$\varphi$ is an increasing function with a jump discontinuity at $t_0$ of amplitude $\alpha$.
Now let $f=t_{0}I_{B_1}$ and $f_i=t_{0}(1-\frac1i)I_{B_1}$, for $i\in\N$. Then $f_i$ is an increasing sequence in $\C^N$, converging point-wise
to $f$ in $\R^N$. On the other hand
$$
\mu(f)=\int_{0}^{t_{0}}V_{k}(B)d\nu(s)=V_{k}(B)\nu((0,t_{0}])=V_k(B_1)\,\varphi(t_0)
$$
and similarly
$$
\mu(f_{i})=V_k(B_1)\,\varphi\left(t_0-\frac 1i\right).
$$
Consequently 
$$
\lim_{i\rightarrow+\infty}\mu(f_{i})<\mu(f).
$$ 

Vice versa, suppose that {\em i)} holds. We observe that, as $\nu$ is non-atomic, every countable subset 
has measure zero with respect to $\nu$. Let $f_{i}\in{\mathcal C}^{N}$, $i\in\N$, be a sequence 
such that either $f_{i}\nearrow f$ or $f_{i}\searrow f$ as $i\rightarrow+\infty$,
point-wise in $\mathbb{R}^{N}$, for some $f\in{\mathcal C}^{N}$. Set 
$$
u_i(t)=V_k(L_t(f_i)),\quad
u(t)=V_k(L_t(f))\quad\forall\, t\ge0,\quad\forall\, k\in\N.
$$
The sequence $u_i$ is monotone and, by Lemmas \ref{l7} and \ref{lemma tecnico 3}, converges to $u$ $\nu$-a.e.
Hence, by the continuity of intrinsic volumes and the monotone convergence theorem, we obtain
$$
\lim_{i\to\infty}\mu(f_i)=
\lim_{i\to\infty}\int_{(0,\infty)}u_i(t)\,d\nu=
\int_{(0,\infty)}u(t)\,d\nu(t)=\mu(f).
$$
\end{proof}

Now we are going to find a more explicit form of condition \eqref{e1}. We need the following lemma.

\begin{lemma}\label{l4}
Let $\phi:\left[ 0,+\infty\right)\rightarrow\mathbb{R}$ be an increasing, non negative and continuous function with $\phi(0)=0$ 
and $\phi(t)>0$, for all $t>0$. Let $\nu$ be a Radon measure such that $\phi(t)=\nu([0,t])$, for all $t\geq 0$.
Then 
$$
\int_{0}^{1}\frac{1}{\phi^{k}(t)}d\nu(t)=+\infty,\ \forall k\geq 1.
$$
\end{lemma}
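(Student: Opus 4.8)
The plan is to reduce the claimed divergence to a one-variable calculus fact about the logarithmic derivative of $\phi$, exactly as in the computation already carried out in this section (where $k\ge1$ forced condition \eqref{int cond 2}). First I would observe that it suffices to treat the case $k=1$: since $\phi$ is increasing with $\phi(0)=0$ and $\phi(t)>0$ for $t>0$, on the interval $(0,1]$ we have $\phi(t)\le\phi(1)$, hence $1/\phi^{k}(t)\ge \phi(1)^{-(k-1)}/\phi(t)$ for $k\ge1$, so divergence of the $k=1$ integral gives divergence for all $k\ge1$. So the core claim is
$$
\int_{0}^{1}\frac{1}{\phi(t)}\,d\nu(t)=+\infty.
$$

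Next I would exploit that $\nu$ is, by hypothesis, the Stieltjes measure of $\phi$, i.e.\ $\phi(t)=\nu([0,t])$, so $d\nu$ is the distributional derivative of $\phi$ and $\phi$ is a monotone (hence BV) function whose continuity is assumed. The heuristic is $\int_0^1 \phi'(t)/\phi(t)\,dt = [\log\phi(t)]_0^1 = \log\phi(1)-\log\phi(0^+) = +\infty$ because $\phi(0)=0$. To make this rigorous at the level of measures, I would fix $0<\varepsilon<1$ and estimate the truncated integral from below. On $[\varepsilon,1]$ the function $1/\phi(t)$ is bounded and the Stieltjes integral $\int_{[\varepsilon,1]}\tfrac1{\phi(t)}\,d\nu(t)$ can be compared with a telescoping Riemann--Stieltjes sum: choosing a partition $\varepsilon=t_0<t_1<\dots<t_m=1$ and using that $1/\phi$ is decreasing,
$$
\int_{[\varepsilon,1]}\frac{1}{\phi(t)}\,d\nu(t)\;\ge\;\sum_{j=1}^{m}\frac{1}{\phi(t_j)}\bigl(\phi(t_j)-\phi(t_{j-1})\bigr)\;\ge\;\sum_{j=1}^{m}\log\frac{\phi(t_j)}{\phi(t_{j-1})}\;=\;\log\frac{\phi(1)}{\phi(\varepsilon)},
$$
where the middle inequality uses $1-x\le -\log x$ for $x=\phi(t_{j-1})/\phi(t_j)\in(0,1]$. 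Letting $\varepsilon\to0^+$ and using $\phi(\varepsilon)\to\phi(0)=0$ by continuity, the right-hand side tends to $+\infty$, which proves the claim. (The monotone convergence theorem justifies passing from the truncated integrals to $\int_{(0,1]}\tfrac1{\phi}\,d\nu$.)

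The only delicate point is making the Riemann--Stieltjes lower bound airtight when $\nu$ may have atoms: if $\phi$ jumps at some $t_j$, then the term $\tfrac1{\phi(t_j)}\bigl(\phi(t_j)-\phi(t_{j-1})\bigr)$ still correctly underestimates $\int_{(t_{j-1},t_j]}\tfrac1{\phi}\,d\nu$ because $1/\phi\ge 1/\phi(t_j)$ on that half-open interval, so the argument goes through verbatim; the continuity hypothesis on $\phi$ is in fact only used at the endpoint to get $\phi(\varepsilon)\to0$. Thus the main obstacle is purely bookkeeping—handling the half-open intervals and the contribution of $\nu(\{0\})$ (which is zero since $\phi(0)=\nu(\{0\})=0$)—rather than anything conceptual; the estimate $1-x\le-\log x$ is what drives the divergence, mirroring the logarithmic blow-up $\int \psi'/\psi$ seen earlier in Lemma~\ref{integral valuation 1}'s surrounding discussion.
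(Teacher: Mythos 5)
Your reduction to $k=1$ via $\phi^{k}(t)\le\phi(1)^{k-1}\phi(t)$ on $(0,1]$ is correct and is a genuine simplification over the paper, which treats each $k$ separately. But the key step of your $k=1$ argument is wrong: writing $x_j=\phi(t_{j-1})/\phi(t_j)\in(0,1]$, your lower Riemann--Stieltjes sum is
$$
\sum_{j=1}^{m}\frac{1}{\phi(t_j)}\bigl(\phi(t_j)-\phi(t_{j-1})\bigr)=\sum_{j=1}^{m}(1-x_j),
$$
and the inequality $1-x\le-\log x$ that you invoke points the \emph{wrong} way: it gives $\sum_j(1-x_j)\le\log\bigl(\phi(1)/\phi(\varepsilon)\bigr)$, an \emph{upper} bound on the lower sum, not a lower bound. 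Indeed each term satisfies $1-x_j\le1$, so for a fixed partition the lower sum is at most $m$ and certainly does not tend to $+\infty$ as $\varepsilon\to0^+$; with $m=1$ it is $1-\phi(\varepsilon)/\phi(1)\le1$. As written, the chain of inequalities proves nothing about divergence.

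The identity you are aiming at, $\int_{(\varepsilon,1]}\phi^{-1}\,d\nu=\log\bigl(\phi(1)/\phi(\varepsilon)\bigr)$, is in fact true, but it needs a different justification. Two standard repairs: (i) the paper's route --- note that $\ln\phi$ (resp.\ $\tfrac{1}{k-1}\phi^{1-k}$ for $k>1$) is continuous and of bounded variation on $[\alpha,1]$ with distributional derivative $\pm\phi^{-k}\nu$, so $\int_{[\alpha,1]}\phi^{-k}\,d\nu$ equals the increment of this antiderivative, which blows up as $\alpha\to0^+$ because $\phi(\alpha)\to\phi(0)=0$; or (ii) keep your lower-sum idea but choose the partition \emph{adaptively}: by continuity of $\phi$ and $\phi(0)=0$, pick $t_j$ with $\phi(t_j)=2^{j-m}\phi(1)$, so that every term $1-x_j$ equals $\tfrac12$ and the lower sum is $m/2\to\infty$ as $m\to\infty$. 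Either fix is short; the rest of your write-up (the reduction to $k=1$, the monotone convergence step, the observation that $\nu(\{0\})=\phi(0)=0$ and that continuity of $\phi$ in fact rules out atoms altogether) is fine.
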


\begin{proof} Fix $\alpha\in[0,1]$. The function $\psi\,:\,[\alpha,1]\to\R$ defined by
$$
\psi(t)=
\left\{
\begin{array}{lll}
\dfrac1{k-1}\phi^{1-k}(t)\quad&\mbox{if $k>1$,}\\
\\
\ln(\phi(t))\quad&\mbox{if $k=1$,}
\end{array}
\right.
$$
is continuous and with bounded variation in $[\alpha,1]$. Its distributional derivative is
$$
\frac1{\phi^k(t)}\,\nu.
$$
Hence, for $k>1$,
$$
\frac1{k-1}[\phi^{1-k}(\alpha)-\phi^{1-k}(1)]=
\psi(1)-\psi(\alpha)=
\int_{[\alpha,1]}\frac{d\nu}{\phi^k(t)}.
$$ 
The claim of the lemma follows letting $\alpha\to0^+$. A similar argument can be applied to the case $k=1$.
\end{proof}

\begin{proposition}\label{p4}
Let $\nu$ be a non-atomic Radon measure on $\left[ 0,+\infty\right) $ and let $k\in\{1,\dots,N\}$. 
Then $(\ref{e1})$ holds if and only if:
\begin{equation}\label{integrability condition}
\mbox{$\exists$ $\delta>0$ such that $\nu([0,\delta])=0$.}
\end{equation}
\end{proposition}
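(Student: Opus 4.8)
The plan is to prove the two implications separately; the forward direction \eqref{e1}$\Rightarrow$\eqref{integrability condition} is the one that requires work.

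For the reverse implication I would simply estimate. Assume $\nu([0,\delta])=0$ for some $\delta>0$ and fix $f\in\C^N$. Since $t\mapsto V_k(L_t(f))$ is decreasing it is bounded on $[\delta,\infty)$ by $V_k(L_\delta(f))<\infty$, and it vanishes for $t>M(f)$; and $\nu$, being Radon, is finite on the compact set $[\delta,M(f)]$. Hence $\int_0^{+\infty}V_k(L_t(f))\,d\nu(t)=\int_{(\delta,M(f)]}V_k(L_t(f))\,d\nu(t)\le V_k(L_\delta(f))\,\nu((\delta,M(f)])<+\infty$, which is \eqref{e1}. This part is routine.

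For the forward implication I would argue by contradiction: assuming $\nu([0,\delta])>0$ for every $\delta>0$, construct one $f\in\C^N$ for which the integral in \eqref{e1} diverges. The key tool is Lemma \ref{l4} applied to $\phi(t):=\nu([0,t])$. This $\phi$ is increasing, finite-valued and continuous (because $\nu$ is non-atomic, $\phi$ has no jumps), with $\phi(0)=\nu(\{0\})=0$; and $\phi(t)>0$ for every $t>0$ is exactly the negation of \eqref{integrability condition}. So all hypotheses of Lemma \ref{l4} hold and $\int_{(0,1]}\phi(t)^{-k}\,d\nu(t)=+\infty$. I then want an $f$ whose level sets carry comparable $\nu$-mass, i.e. with $V_k(L_t(f))$ at least a constant times $\phi(t)^{-k}$ near $0$. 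I take $f$ radial, $f(x)=w(\|x\|)$ with $w$ continuous and strictly decreasing, so that $L_t(f)=B_{r(t)}$ with $r=w^{-1}$ and $V_k(L_t(f))=c\,r(t)^k$ for a positive constant $c=c(k,N)$ (as already used in Section \ref{sec. Continuous and integral valuations}). It remains to exhibit a continuous strictly decreasing surjection $r:(0,1]\to[0,+\infty)$ with $r(t)\ge\tfrac12\phi(t)^{-1}$ near $0$; the choice $r(t)=\phi(t)^{-1}-\phi(1)^{-1}+(1-t)$ works. The term $1-t$ plays a double role: it makes $r$ strictly decreasing even on intervals where $\phi$ is locally constant, and it forces $r(1)=0$, so that $L_t(f)=\varnothing$ for $t>1$, $f\in\C^N$ and $M(f)=1$. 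Picking $t_0$ with $\phi(t_0)\le\tfrac12\phi(1)$ gives $r(t)\ge\tfrac12\phi(t)^{-1}$ on $(0,t_0]$; since $\int_{(t_0,1]}\phi(t)^{-k}\,d\nu(t)\le\phi(t_0)^{-k}\,\nu((t_0,1])<+\infty$, the divergence of $\int_{(0,1]}\phi(t)^{-k}\,d\nu(t)$ forces $\int_{(0,t_0]}\phi(t)^{-k}\,d\nu(t)=+\infty$, whence $\int_0^{+\infty}V_k(L_t(f))\,d\nu(t)=+\infty$, contradicting \eqref{e1}.

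The only genuinely delicate point is the construction of $f$: one must check that $r$ is a bijection of $(0,1]$ onto $[0,+\infty)$ (so $w=r^{-1}$ is a well-defined profile), that the radial $f$ so obtained lies in $\C^N$ (automatic here, since $f$ is continuous and its superlevel sets are balls, hence convex bodies or empty), and — crucially — that subtracting the constant $\phi(1)^{-1}$ does not spoil the comparison between $r$ and $\phi^{-1}$, which is precisely why the lower estimate is carried out only on $(0,t_0]$. Everything else reduces to elementary manipulations with monotone functions and with the finiteness of a Radon measure on compact sets.
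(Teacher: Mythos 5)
Your proof is correct and follows essentially the same strategy as the paper: the easy direction by the same direct estimate, and the converse by contradiction, applying Lemma \ref{l4} to $\phi(t)=\nu([0,t])$ and building a radial $f$ whose level sets are balls of radius comparable to $\phi(t)^{-1}$ near $0$. The only difference is cosmetic: the paper takes $r(t)=\bigl(t\phi(t)\bigr)^{-1}$ (the factor $1/t$ ensuring strict monotonicity and the lower bound $r(t)\ge\phi(t)^{-1}$ on all of $(0,1]$), whereas you add $1-t$ and restrict the comparison to $(0,t_0]$, which works equally well.
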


\begin{proof}
We suppose that there exists $\delta>0$ such that $[0,\delta]\cap \textnormal{supp}(\nu)=\varnothing$. Then we have,
for every $f\in\C^N$, 
\begin{eqnarray}
\mu(f)&=&
\int_{\delta}^{M(f)}V_{i}(L_{t}(f))d\nu(t)\leq V_{i}(L_{\delta}(f))\int_{\delta}^{M(f)}d\nu(t)\\
&=&V_{i}(L_{\delta}(f))(\nu([0,M(f)])-\nu([0,\delta]))<+\infty.
\end{eqnarray}
with $M(f)=\max_{\mathbb{R}^{N}}f$.

\noindent
Vice versa, assume that $(\ref{e1})$ holds. By contradiction, we suppose that for all $\delta>0$, we have $\ \nu([0,\delta])>0$.
We define 
$$
\phi(t)=\nu([0,t]),\quad t\in[0,1]
$$  
then $\phi$ is continuous (as $\nu$ is non-atomic) and increasing; moreover $\phi(0)=0$ and $\phi(t)>0$, for all $t>0$.
The function 
$$
\psi(t)=\frac1{t\phi(t)},\quad t\in(0,1],
$$
is continuous and strictly decreasing. Its inverse $\psi^{-1}$ is defined in $[\psi(1),\infty)$; we extend it to $[0,\psi(1))$ setting
$$
\psi^{-1}(r)=1\quad\forall\, r\in[0,\psi(1)).
$$
Then
$$
V_{1}(\{r\in\left[ 0,+\infty\right):\ \psi^{-1}(r)\geq t\})=
\left\{
\begin{array}{lll}
\psi(t),\quad&\forall\, t\in\left( 0,1\right]\\
\\
0\quad&\forall\,t>1.
\end{array}
\right.
$$
We define now the function $f\,:\,\R^N\to\R$ as 
$$
f(x)=\psi^{-1}(||x||),\quad\forall\, x\in\R^N.
$$
Then 
$$
L_{t}(f)=\{x\in\mathbb{R}^{N}:\ \psi(||x||)\geq t\}=B_{\frac{1}{t\phi(t)}}(0),
$$
and
$$
V_{k}(L_{t}(f))=c\,\frac{1}{t^k\phi^{k}(t)} \quad\forall\,t\in(0,1],
$$
where $c>0$ depends on $N$ and $k$. Hence, by Lemma \ref{l4}
$$
\int_{0}^{+\infty}V_{k}(L_{t}(f))d\nu(t)=
\int_{0}^1V_{k}(L_{t}(f))d\nu(t)
\ge c\,\int_{0}^{+\infty}\frac{d\nu(t)}{\phi^{k}(t)}=+\infty.
$$
\end{proof}

The following proposition summarizes some of the results we have found so far.

\begin{proposition}\label{integral valuations 1} Let $k\in\{0,\dots,N\}$ and let $\nu$ be a Radon measure on $[0,\infty)$ which is non atomic and,
if $k\ge1$, verifies condition
\eqref{integrability condition}. Then the map $\mu\,:\,\C^N\to\R$ defined by \eqref{integral valuations} is an invariant, continuous and increasing
valuations.
\end{proposition}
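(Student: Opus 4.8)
The plan is to obtain the statement by assembling the three preceding propositions, the only genuine work being to check that the standing integrability hypothesis \eqref{e1} holds under the assumptions made here.

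First I would verify \eqref{e1}. For $k\in\{1,\dots,N\}$ this is immediate: by hypothesis $\nu$ is non-atomic and satisfies \eqref{integrability condition}, so Proposition \ref{p4} applies verbatim and yields \eqref{e1}. The case $k=0$ is not covered by Proposition \ref{p4} and must be treated directly; but here $V_0$ is the Euler characteristic, so $V_0(L_t(f))=1$ whenever $0<t\le M(f)$ and $V_0(L_t(f))=0$ for $t>M(f)$, whence
\[
\int_{0}^{+\infty}V_0(L_t(f))\,d\nu(t)=\nu\big((0,M(f)]\big)<+\infty,
\]
because $\nu$ is a Radon measure and $(0,M(f)]$ is bounded. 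Thus \eqref{e1} holds for every $k\in\{0,\dots,N\}$ and every $f\in\C^N$.

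Once \eqref{e1} is in force, the proposition preceding Proposition \ref{p3} shows directly that $\mu$, defined by \eqref{integral valuations}, is a rigid motion invariant, monotone increasing valuation on $\C^N$. It then remains only to establish continuity: since \eqref{e1} holds and $\nu$ is non-atomic by hypothesis, the implication $\mathrm{i)}\Rightarrow\mathrm{ii)}$ of Proposition \ref{p3} gives at once that $\mu$ is continuous, which completes the argument.

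I expect no serious obstacle here: the proof is essentially bookkeeping, invoking the already-proved facts in the right order. The only point requiring a moment's care is the separate verification of \eqref{e1} in the borderline case $k=0$, where one cannot quote Proposition \ref{p4} and must instead use the explicit form of $V_0$ together with the local finiteness of the Radon measure $\nu$.
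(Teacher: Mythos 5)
Your proposal is correct and follows exactly the route the paper intends: the paper states this proposition only as a summary of the preceding results (the unnamed proposition after \eqref{integral valuations}, Proposition \ref{p3}, and Proposition \ref{p4}), which is precisely what you assemble. Your separate check of \eqref{e1} for $k=0$ via the Euler characteristic and local finiteness of the Radon measure is the right way to cover the case not handled by Proposition \ref{p4}.
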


\subsection{The connection between the two types of integral valuations}\label{connection}
When the regularity of the involved functions permits, the two types of integral valuations that we have seen 
can be obtained one from each other by a simple integration by parts (up to decomposing an arbitrary aluation as the difference of two 
monotone valuations).

Let $k\in\{0,\dots,N\}$ and $\phi\in C^1([0,\infty))$ be such that $\phi(0)=0$. For simplicity,
we may assume also that $\phi$ has compact support. Let $f\in\C^N$. By the definition of distributional derivative of an increasing function we have:
$$
\int_{[0,\infty)}\phi(t)\,dS_k(f;t)=\int_{[0,\infty)}\phi'(t) V_k(L_t(f))dt.
$$
If we further decompose $-\phi'$ as the difference of two non-negative functions, and we denote by $\nu_1$ and $\nu_2$ the Radon measures
having those functions as densities, we get
$$
\int_{[0,\infty)}\phi(t)\,dS_k(f;t)=\int_{[0,\infty)}V_k(L_t(f))d\nu_1(t)-
\int_{[0,\infty)}V_k(L_t(f))d\nu_2(t).
$$
The assumption that $\phi$ has compact support can be demoved by a standard approximation argument. In his way
we have seen that each valuation of the form \eqref{integral valuation 1}, if $\phi$ is regular, is the difference of two 
monotone integral valuations of type \eqref{integral valuations}.

Vice versa, let $\nu$ be a Radon measure (with support contained in $[\delta,\infty)$, for some $\delta>0$), 
and assume that it has a smooth density with respect to the Lebesgue measure: 
$$
d\nu(t)=\phi'(t)dt
$$
where $\phi\in C^1([0,\infty))$, and it has compact support. Then 
$$
\int_{[0,\infty)}V_k(L_t(f))\,d\nu(t)=
\int_{[0,\infty)}\phi(t)\, dS_k(f;t).
$$
Also in this case the assumption that the support of $\nu$ is compact can be removed. 
In other words each integral monotone valuation, with sufficiently smooth density, can be written in the form \eqref{integral valuation 1}.

\subsection{The case $k=N$}

If $\mu$ is a valuation of the form \eqref{integral valuation 1} and $k=N$, the layer cake principle provides an alternative 
simple representation.

\begin{proposition}\label{label}
Let $\phi$ be a continuous function on $[0,\infty)$ verifying \eqref{integrability condition}. Then for every $f\in\C^N$ we have
\begin{equation}\label{prima}
\int_{[0,\infty)}\phi(t)\,dS_N(f;t)=
\int_{\R^N}\phi(f(x))dx.
\end{equation}
\end{proposition}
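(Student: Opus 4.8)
The plan is to prove \eqref{prima} via the layer cake (Cavalieri) principle applied to the non-negative function $\phi\circ f$. First I would recall that for any non-negative measurable function $g$ on $\R^N$, Fubini's theorem gives $\int_{\R^N} g(x)\,dx = \int_0^\infty V_N(\{x : g(x) > s\})\,ds$. I would apply this with $g = \phi\circ f$; the left-hand side of \eqref{prima} is then turned into an integral over the parameter $s$ of volumes of super-level sets of $\phi\circ f$.

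The key step is to relate the super-level sets $\{x : \phi(f(x)) > s\}$ to the super-level sets $L_t(f)$ of $f$ itself. Since $\phi$ verifies \eqref{integrability condition}, it vanishes on $[0,\delta]$; I would then want to reduce to the situation where $\phi$ is non-decreasing, or at least handle the structure of $\phi$ via integration by parts. Concretely, I would write $\phi(f(x)) = \int_{[0,\infty)} \mathbf{1}_{\{t \le f(x)\}}\,d\phi(t)$ when $\phi$ is, say, of bounded variation (which holds for the continuous $\phi$ with compact support one reduces to by approximation, and in general $\phi$ restricted to $[\delta, M(f)]$ is continuous hence can be handled), more precisely $\phi(f(x)) = \int_{(0, f(x)]} d\phi(t)$ using $\phi(0)=0$. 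Then
$$
\int_{\R^N}\phi(f(x))\,dx = \int_{\R^N}\int_{(0,\infty)} \mathbf{1}_{L_t(f)}(x)\,d\phi(t)\,dx = \int_{(0,\infty)} V_N(L_t(f))\,d\phi(t),
$$
by Tonelli (everything non-negative after reducing to $\phi$ monotone, or by splitting $\phi$ into its positive and negative variation parts, using \eqref{integrability condition} to guarantee integrability as in the estimate following \eqref{int cond 2}). Finally I would identify $\int_{(0,\infty)} V_N(L_t(f))\,d\phi(t)$ with $\int_{[0,\infty)}\phi(t)\,dS_N(f;t)$ by integration by parts: since $-S_N(f;\cdot)$ is the distributional derivative of the decreasing function $t \mapsto V_N(L_t(f))$, and $d\phi$ is (the Stieltjes measure of) the derivative of $\phi$, the integration-by-parts formula for functions of bounded variation (with vanishing boundary terms, as $\phi(0)=0$ and both $\phi$ and $V_N(L_t(f))$ are eventually zero) yields $\int \phi\, dS_N(f;\cdot) = \int V_N(L_t(f))\,d\phi(t)$.

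The main obstacle I anticipate is bookkeeping about the boundary terms and the precise class of $\phi$ for which the Stieltjes/integration-by-parts manipulations are valid: one must be careful that $\phi$ need only be continuous (not $C^1$), so $d\phi$ is a signed measure whose total variation is finite on $[\delta, M(f)]$ but which need not be absolutely continuous. The clean route is to first establish \eqref{prima} for $\phi \in C^1$ with compact support (where Tonelli and ordinary integration by parts are immediate), and then pass to general continuous $\phi$ satisfying \eqref{integrability condition} by a uniform-approximation argument on the compact interval $[\delta, M(f)]$, using that for fixed $f$ both sides of \eqref{prima} are continuous functionals of $\phi$ with respect to uniform convergence on $[\delta, M(f)]$ — the left side because $S_N(f;\cdot)$ is a finite measure there, the right side because $\{f \ge \delta\}$ has finite volume. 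A second minor point is verifying the equality $\{x : \phi(f(x)) > s\}$-type sets are measurable and that the reduction to $\phi$ non-decreasing (or the splitting of $d\phi$) does not lose information; this is routine given the layer-cake identity but should be stated explicitly.
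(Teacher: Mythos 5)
Your proposal is correct and follows essentially the same route as the paper: reduce to $\phi\in C^1$ with compact support (splitting $\phi$ into monotone/non-negative pieces so that Tonelli and the layer cake principle apply), convert $\int\phi\,dS_N(f;\cdot)$ into $\int V_N(L_t(f))\,d\phi(t)$ via the definition of the distributional derivative, identify the latter with $\int_{\R^N}\phi(f(x))\,dx$ by Fubini/layer cake, and pass to general continuous $\phi$ by uniform approximation on compacta (the paper uses dominated convergence for this last step, you use continuity of both sides in $\phi$ — the same estimate). The bookkeeping points you flag (signed Stieltjes measure, boundary terms, taking the approximants to vanish on $[0,\delta]$) are exactly the ones the paper handles.
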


\begin{proof} As $\phi$ can be written as the difference of two non-negative continuous function, and \eqref{prima} is linear with respect to $\phi$, 
there is no restriction if we assume that $\phi\ge0$. In addition we suppose initially that $\phi\in C^1([0,\infty))$ and it has compact support. 
Fix $f\in\C^N$; by the definition of distributional derivative, we have
$$
\int_{[0,\infty)}\phi(t)\,dS_N(f;t)=\int_{[0,\infty)}V_N(L_t(f))\phi'(t)dt.
$$
There exists $\phi_1,\phi_2\in C^1([0,\infty))$, strictly increasing, such that $\phi=\phi_1-\phi_2$. Now:
$$
\int_{[0,\infty)}V_N(L_t(f))\phi_1'(t)dt=
\int_{[0,\infty)}V_N(\{x\in\R^N\,:\,\phi_1(f(x))\ge s\})ds=
\int_{\R^N}\phi_1(f(x))dx,
$$
where in the last equality we have used the layer cake principle. Applying the same argument to $\phi_2$ we obtain \eqref{prima}
when $\phi$ is smooth and compactly supported. 
For the general case, we apply the result obtained in the previous part of the proof
to a sequence $\phi_i$, $i\in\N$, of functions in $C^1([0,\infty))$, with compact support, which converges uniformly to $\phi$ on compact subsets of
$(0,\infty)$. The conclusion follows from a direct application of the dominated convergence theorem.
\end{proof}

\section{Simple valuations}\label{section 6}

Throughout this section $\mu$ will be an invariant and continuous valuation on $\C^N$. We will also assume that $\mu$
is {\em simple}. 

\begin{defn}
A valuation $\mu$ on ${\mathcal C}^{N}$ is said to be simple if, for every $f\in\C^N$ with 
$\dim(\supp(f))<N$, we have $\mu(f)=0$.
\end{defn}
Note that $\dim(\supp(f))<N$ implies that $f=0$ a.e. in $\R^N$, hence each valuation of the form \eqref{prima} is simple. We are going to prove that
in fact the converse of this statement is true.

Fix $t\ge0$ and define a real-valued function $\sigma_t$ on ${\mathcal K}^{N}\cup\{\varnothing\}$ as 
$$
\sigma_{t}(K)=\mu(tI_{K})\quad\forall\, K\in\K^N,\quad\sigma_t(\varnothing)=0.
$$ 
Let $K,L\in\K^N$ be such that $K\cup L\in\K^N$. As, trivially,
$$
tI_K\vee tI_L=tI_{K\cup L}\quad\mbox{and}\quad
tI_K\wedge tI_L=tI_{K\cap L},
$$
using the valuation property of $\mu$ we infer
$$
\sigma_t(K\cup L)+\sigma_t(K\cap L)=\sigma_t(K)+\sigma_t(L),
$$
i.e. $\sigma_t$ is a valuation on $\K^N$. It also inherits directly two properties of $\mu$: it is invariant and simple. Then, by the continuity of 
$\mu$, Corollary \ref{c1} and the subsequent remark, there exists a constant $c$ such that
\begin{equation}\label{polytopes}
\sigma_t(K)=cV_N(K)
\end{equation}
for every $K\in\K^N$.  The constant $c$ will in general depend on $t$, i.e. it is a real-valued function defined in $[0,\infty)$. We denote this function by 
$\phi_N$. Note that, as $\mu(f)=0$ for $f\equiv0$, $\phi_N(0)=0$. Moreover, the continuity of $\mu$ implies that for every $t_0\ge0$ and for every monotone 
sequence $t_i$, $i\in\N$, converging to $t_0$, we have
$$
\phi_N(t_0)=\lim_{i\to\infty}\phi_N(t_i).
$$
From this it follows that $\phi_N$ is continuous in $[0,\infty)$. 

\begin{proposition}\label{fi N} 
Let $\mu$ be an invariant, continuous and simple valuation on $\C^N$. Then there exists a continuous function $\phi_N$ on
$[0,\infty)$, such that 
$$
\mu(tI_K)=\phi_N(t)\,V_N(K)
$$
for every $t\ge0$ and for every $K\in\K^N$.
\end{proposition}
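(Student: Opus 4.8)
The plan is to reduce the statement to the Volume Theorem for convex bodies (Corollary~\ref{c1}), following the idea already sketched before the proposition. For a fixed $t\ge0$ I would introduce the set functional $\sigma_t\,:\,\K^N\cup\{\varnothing\}\to\R$ defined by $\sigma_t(K)=\mu(tI_K)$ and $\sigma_t(\varnothing)=0$, and check that it inherits from $\mu$ exactly the properties needed to apply that theorem. The valuation property is immediate from the identities $tI_K\vee tI_L=tI_{K\cup L}$ and $tI_K\wedge tI_L=tI_{K\cap L}$ (valid for all $K,L\in\K^N$, and both in $\C^N$) together with the valuation property of $\mu$; rigid motion invariance follows from $(tI_K)\circ T=tI_{T^{-1}(K)}$ and the invariance of $\mu$; and simplicity holds because $\dim(\supp(tI_K))\le\dim K$, so $\dim K<N$ forces $tI_K$ to vanish a.e.\ and hence $\mu(tI_K)=0$.

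The one point that needs a short argument is continuity. I would use the weaker form of the Volume Theorem recorded in the Remark after Corollary~\ref{c1}: it is enough that $\sigma_t(K_i)\to\sigma_t(K)$ whenever $K_i\searrow K$ in $\K^N$. For such a sequence one has $tI_{K_i}\searrow tI_{\bigcap_i K_i}=tI_K$ point-wise (the limit of a decreasing sequence of convex bodies is their intersection, cf.\ Lemma~1.8.1 of~\cite{Schneider}), so the monotone-continuity axiom for $\mu$ gives $\mu(tI_{K_i})\to\mu(tI_K)$, i.e.\ continuity of $\sigma_t$ along decreasing sequences. The Volume Theorem then yields, for each $t$, a constant which I call $\phi_N(t)$, with $\sigma_t(K)=\phi_N(t)\,V_N(K)$ for every $K\in\K^N$; this is the asserted identity.

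It remains to check that $\phi_N\in C([0,\infty))$ and $\phi_N(0)=0$. Fixing any $K_0\in\K^N$ with $V_N(K_0)=1$ we have $\phi_N(t)=\mu(tI_{K_0})$, and $\phi_N(0)=\mu(\underline{0})=0$. If $t_i\to t_0$ monotonically, then $t_iI_{K_0}\to t_0I_{K_0}$ point-wise and monotonically in $\C^N$, so continuity of $\mu$ gives $\phi_N(t_i)\to\phi_N(t_0)$; since every sequence converging to $t_0$ admits a monotone subsequence, $\phi_N$ is continuous at $t_0$. I do not expect a genuine obstacle: all the content sits in Corollary~\ref{c1} and in the monotone-continuity hypothesis on $\mu$, and the proof is essentially a transcription of the discussion preceding the statement; the only place demanding care is the verification that $\sigma_t$ is continuous in the sense required to invoke the Volume Theorem.
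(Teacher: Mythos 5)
Your proposal is correct and follows essentially the same route as the paper: both define $\sigma_t(K)=\mu(tI_K)$, verify that it is an invariant, simple valuation on $\K^N\cup\{\varnothing\}$ continuous along decreasing sequences, invoke the Volume Theorem together with the remark following Corollary~\ref{c1}, and then deduce continuity of $\phi_N$ from the monotone continuity of $\mu$. Your write-up merely spells out in more detail the decreasing-sequence continuity check that the paper delegates to that remark.
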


\subsection{Simple functions}

\begin{defn} A function $f\,:\,\R^N\to\R$ is called simple if it can be written in the form
\begin{equation}\label{simple function}
f=t_1 I_{K_1}\vee\dots\vee t_m I_{K_m}
\end{equation}
where $0<t_1<\dots<t_m$ and $K_1,\dots,K_m$ are convex bodies such that
$$
K_1\supset K_2\supset\dots\supset K_m.
$$
\end{defn}

The proof of the following fact is straightforward.

\begin{proposition}\label{level sets of simple functions}
Let $f$ be a simple function of the form \eqref{simple function} and let $t>0$. 
Then 
\begin{equation}
L_t(f)=\{x\in\R^N\,:\,f(x)\ge t\}=
\left\{
\begin{array}{lllll}
K_i\quad&\mbox{if $t\in(t_{i-1},t_i]$ for some $i=1,\dots m$,}\\
\\
\varnothing\quad&\mbox{if $t>t_m$,}
\end{array}
\right.
\end{equation}
where we have set $t_0=0$.
\end{proposition}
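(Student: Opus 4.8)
The plan is to compute $f(x)$ pointwise and then read off the super-level sets directly from the nesting of the $K_j$'s. First I would observe that, by the definition of the pointwise maximum, $f(x)=\max_{1\le j\le m} t_j I_{K_j}(x)$, and since each term equals either $t_j$ (when $x\in K_j$) or $0$, we get $f(x)=\max\{t_j : x\in K_j\}$, with the convention that this maximum is $0$ when $x$ belongs to none of the $K_j$; because $K_1\supset K_2\supset\dots\supset K_m$, that case occurs precisely when $x\notin K_1$.

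Next I would exploit the monotone nesting of the bodies. For a fixed $x$, the index set $\{j : x\in K_j\}$ is, thanks to $K_1\supset\dots\supset K_m$, either empty or an initial segment $\{1,\dots,\ell\}$ of $\{1,\dots,m\}$; write $\ell(x)$ for its largest element, and $\ell(x)=0$ if $x\notin K_1$. Since $t_1<t_2<\dots<t_m$, the maximum defining $f(x)$ is attained at the largest admissible index, so $f(x)=t_{\ell(x)}$ (with $t_0=0$). Phrased differently, $\{x\in\R^N : \ell(x)\ge i\}=K_i$ for each $i=1,\dots,m$.

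Finally I would split into the two cases. If $t>t_m$, then $f(x)\le t_m<t$ for every $x$, hence $L_t(f)=\varnothing$. If $t\in(t_{i-1},t_i]$ for some $i\in\{1,\dots,m\}$, then $f(x)\ge t$ is equivalent to $t_{\ell(x)}\ge t$, which — because $t_{i-1}<t\le t_i$ and the $t_j$ are strictly increasing — is equivalent to $\ell(x)\ge i$, i.e. to $x\in K_i$; therefore $L_t(f)=K_i$. This yields the stated formula.

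There is no genuine obstacle here; the only point requiring a moment's attention is the bookkeeping with the convention $t_0=0$ together with the case $x\notin K_1$: one checks that then $f(x)=0<t$ for all $t>0$, so such an $x$ never lies in $L_t(f)$, which is consistent with $L_t(f)=K_i\subseteq K_1$.
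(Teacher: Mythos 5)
Your argument is correct and complete; the paper itself omits the proof, remarking only that the statement is ``straightforward,'' and your pointwise computation via the initial-segment index $\ell(x)$ is exactly the natural argument being alluded to. Nothing to add.
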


In particular simple functions are quasi-concave. Let $k\in\{0,\dots,N\}$, and let $f$ be of the form \eqref{simple function}. Consider the function
$$
t\,\rightarrow\,u(t):=V_k(L_t(f)),\quad t>0.
$$
By Proposition \ref{level sets of simple functions}, this is a decreasing function that is constant on each interval of the form
$(t_{i-1},t_i]$, on which it has the value $V_k(K_i)$. Hence its distributional derivative is $-S_k(f;\cdot)$, where
\begin{equation}\label{measure for simple functions}
S_k(f;\cdot)=\sum_{i=1}^{m-1}(V_k(K_{i})-V_k(K_{i+1}))\,\delta_{t_i}(\cdot)+V_k(K_m)\delta_{t_m}(\cdot).
\end{equation}

\subsection{Characterization of simple valuations}

In this section we are going to prove Theorem \ref{characterization simple}. Note that one implication, i.e. that every map of the form \eqref{intro0.2} 
has the required properties, follows from  the results of the previous section; in particular Proposition \ref{proposition integral valuations 1}
and Proposition \ref{label}

We will first prove it for simple functions and then pass to the general case by approximation.

\begin{lemma}\label{simple on simple} Let $\mu$ be an invariant, continuous and simple valuation on $\C^N$, and let 
$\phi=\phi_N$ be the function whose existence is established in Proposition \ref{fi N}. Then, for every simple function $f\in\C^N$ we have
$$
\mu(f)=\int_{[0,\infty)}\phi(t)\,dS_N(f;t).
$$
\end{lemma}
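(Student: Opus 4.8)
The plan is to prove the identity by induction on the number $m$ of ``levels'' in the simple function $f = t_1 I_{K_1} \vee \dots \vee t_m I_{K_m}$, using the valuation property of $\mu$ to peel off one level at a time. For the base case $m=1$ we have $f = t_1 I_{K_1}$, and Proposition \ref{fi N} gives $\mu(f) = \phi_N(t_1) V_N(K_1)$; on the other side, by Proposition \ref{level sets of simple functions} the measure $S_N(f;\cdot)$ equals $V_N(K_1)\,\delta_{t_1}$ (this is \eqref{measure for simple functions} with $m=1$), so $\int_{[0,\infty)}\phi(t)\,dS_N(f;t) = \phi(t_1) V_N(K_1)$, and the two agree since $\phi = \phi_N$.

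For the inductive step, suppose the claim holds for simple functions with at most $m-1$ levels, and let $f$ have $m$ levels. Write $g = t_1 I_{K_1} \vee \dots \vee t_{m-1} I_{K_{m-1}}$ and $h = t_m I_{K_m}$. I would like to apply the valuation identity to $g$ and $h$. First one checks $g \vee h \in \C^N$: indeed $g \vee h = f$, which is quasi-concave by Proposition \ref{level sets of simple functions}. Then $g \wedge h$ has level sets $L_t(g)\cap L_t(h)$ by Lemma \ref{l1}; since $K_m \subset K_i$ for all $i \le m-1$, one computes $L_t(g) \cap L_t(h) = K_m$ for $t \in (0, t_m]$ and $\varnothing$ for $t > t_m$, so $g \wedge h = t_m I_{K_m} = h$ — wait, more carefully, $g \wedge h$ as a function equals $\min(g,h)$, which at a point $x$ is $t_m$ if $x \in K_m$ (since there $g(x) \ge t_{m-1}$ is not guaranteed — actually $x \in K_m \subset K_{m-1}$ so $g(x) \ge t_{m-1}$, hence $\min = t_m$) and is $\le t_{m-1}$ otherwise but could be positive. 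So in fact $g \wedge h = t_m I_{K_m} \vee (\text{something} \le t_{m-1})$; to avoid this complication I would instead choose the decomposition $f = g' \vee h'$ with $g' = t_m I_{K_m}$ alone and $h' = t_1 I_{K_1} \vee \dots \vee t_{m-1} I_{K_{m-1}} \vee t_{m-1} I_{K_m}$ — i.e.\ replace the top level by a copy at height $t_{m-1}$ — so that $g' \vee h' = f$ and $g' \wedge h' = t_{m-1} I_{K_m}$, both manifestly simple with fewer levels (the latter with one level, the former with $m-1$ levels after merging). Actually the cleanest choice: set $g = t_{m-1} I_{K_{m-1}} \vee t_m I_{K_m}$ reduced — let me just say the decomposition is engineered so that $g$, $h$, and $g \wedge h$ all have at most $m-1$ levels while $g \vee h = f$; then $\mu(f) = \mu(g) + \mu(h) - \mu(g \wedge h)$, the inductive hypothesis applies to each term on the right, and it remains to verify that $S_N(f;\cdot) = S_N(g;\cdot) + S_N(h;\cdot) - S_N(g\wedge h;\cdot)$, which is immediate from the additivity of $V_N$ as a valuation applied levelwise (as in \eqref{measure for simple functions}) together with $L_t(f) = L_t(g) \cup L_t(h)$, $L_t(g \wedge h) = L_t(g) \cap L_t(h)$.

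Alternatively, and perhaps more transparently, one can avoid induction entirely and argue by a telescoping identity: writing $f_j = t_j I_{K_j} \vee \dots \vee t_m I_{K_m}$ for $j = 1, \dots, m$, one shows directly by repeated use of the valuation property that
$$
\mu(f) = \sum_{j=1}^m \bigl(\mu(t_j I_{K_j}) - \mu(t_{j} I_{K_{j+1}})\bigr)
$$
with the convention $K_{m+1} = \varnothing$, which by Proposition \ref{fi N} equals $\sum_{j=1}^m \phi_N(t_j)(V_N(K_j) - V_N(K_{j+1}))$; comparing with \eqref{measure for simple functions} — namely $S_N(f;\cdot) = \sum_{i=1}^{m-1}(V_N(K_i) - V_N(K_{i+1}))\delta_{t_i} + V_N(K_m)\delta_{t_m}$ and noting $V_N(K_{m+1}) = V_N(\varnothing) = 0$ — one sees $\int_{[0,\infty)}\phi\,dS_N(f;\cdot) = \sum_{j=1}^m \phi(t_j)(V_N(K_j) - V_N(K_{j+1}))$, which matches. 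Either way the only genuinely delicate point is the bookkeeping in the decomposition step: one must exhibit, at each stage, functions whose pointwise max is the current simple function and whose min is again simple (so that $\mu$'s valuation identity applies and the number of levels strictly decreases), and this is where I expect to spend the most care — the rest is the routine identity \eqref{measure for simple functions} for $S_N$ on simple functions combined with Proposition \ref{fi N}.
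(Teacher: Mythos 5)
Your proposal is correct and is essentially the paper's own proof: induction on the number of levels, peeling off the top term $t_mI_{K_m}$ via the valuation identity and evaluating single-level functions with Proposition \ref{fi N}, then matching against \eqref{measure for simple functions}. The complication you agonize over does not actually arise: with $g=t_1I_{K_1}\vee\dots\vee t_{m-1}I_{K_{m-1}}$ and $h=t_mI_{K_m}$ one has exactly $g\wedge h=t_{m-1}I_{K_m}$ (since $h\equiv0$ off $K_m$ while $g\equiv t_{m-1}<t_m$ on $K_m$ --- note your intermediate ``hence $\min=t_m$'' is a slip), so this is already a one-level simple function and your modified pair $(g',h')$ is the same decomposition in disguise; this is precisely what the paper does.
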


\begin{proof} Let $f$ be of the form \eqref{simple function}. We prove the following formula
\begin{equation}\label{induction}
\mu(f)=\sum_{i=1}^{m-1}\phi(t_i)(V_N(K_i)-V_N(K_{i+1}))+\phi(t_m)V_N(K_m);
\end{equation}
by \eqref{measure for simple functions}, this is equivalent to the statement of the lemma. Equality \eqref{induction} will be proved by induction on $m$.
For $m=1$ its validity follows from Proposition \ref{fi N}. Assume that it has been proved up to $(m-1)$. Set
$$
g=t_1 I_{K_1}\vee\dots\vee t_{m-1}I_{K_{m-1}},\quad
h=t_m I_{K_m}.
$$
We have that $g,h\in\C^N$ and 
$$
g\vee h=f\in\C^N,\quad 
g\wedge h= t_{m-1} I_{K_m}.
$$
Using the valuation property of $\mu$ and Proposition \ref{fi N} we get
\begin{eqnarray*}
\mu(f)&=&\mu(g\vee h)=\mu(g)+\mu(h)-\mu(g\wedge h)\\
&=&\mu(g)+\phi(t_m)V_N(K_m)-\phi(t_{m-1})V_N(K_m).
\end{eqnarray*}
On the other hand, by induction
\begin{equation*}
\mu(g)=\sum_{i=1}^{m-2}\phi(t_i)(V_N(K_i)-V_N(K_{i+1}))+\phi(t_{m-1})V_N(K_{m-1}).
\end{equation*}
The last two equalities complete the proof.
\end{proof}

\noindent
{\em Proof of Theorem \ref{characterization simple}.}  
As before, $\phi=\phi_N$ is the function coming from Proposition \ref{fi N}. We want to prove that
\begin{equation}\label{claim theorem 1}
\mu(f)=\int_{[0,\infty)}\phi(t)\,dS_N(f;t)
\end{equation}
for every $f\in\C^N$.

\medskip

\noindent
{\bf Step 1.}
Our first step is to establish the validity of this formula when the support of $f$ bounded, i.e. there exists
some convex body $K$ such that
\begin{equation}\label{compact support}
L_t(f)\subset K\quad\forall\, t>0.
\end{equation}
Given $f\in\C^N$ with this property, we build a monotone sequence of simple functions, $f_i$, $i\in\N$, converging point-wise to $f$ in $\R^N$.  
Let $M=M(f)$ be the maximum of $f$ on $\R^N$. Fix $i\in\N$. We consider the dyadic partition ${\mathcal P}_i$ of $[0,M]$:
$$
{\mathcal P}_i=\left\{t_j=j\,\frac M{2^i}\,:\,j=0,\dots, 2^i\right\}.
$$
Set 
$$
K_j=L_{t_j}(f),\quad
f_i=\bigvee_{j=1}^{2^i}t_jI_{K_j}.
$$
$f_i$ is a simple function; as $t_j I_{K_j}\le f$ for every $j$ we have that $f_i\le f$ in $\R^N$. 
The sequence of function $f_i$ is increasing, since ${\mathcal P}_i\subset{\mathcal P}_{i+1}$. 
The inequality
$f_i\le f$ implies that
$$
\lim_{i\to\infty} f_i(x)\le f(x)\quad\forall\, x\in\R^N
$$
(in particular the support of $f_i$ is contained in $K$, for every $i\in\N$).
We want to establish the reverse inequality. Let $x\in\R^N$; if $f(x)=0$ then trivially
$$
f_i(x)=0\quad\forall\,i\quad\mbox{hence}\quad
\lim_{i\to\infty} f_i(x)=f(x).
$$
Assume that $f(x)>0$ and fix $\epsilon>0$. Let $i_0\in\N$ be such that $2^{-i_0}M<\epsilon$. Let $j\in\{1,\dots,2^{i_0}-1\}$ be such that
$$
f(x)\in\left(
j\,\frac M{2^{i_0}},(j+1)\frac M{2^{i_0}}
\right].
$$
Then 
$$
f(x)\le j\,\frac M{2^{i_0}}+\frac M{2^{i_0}}\le
f_{i_0}(x)+\epsilon
\le\lim_{i\to\infty}f_i(x)+\epsilon.
$$
Hence the sequence $f_i$ converges point-wise to $f$ in $\R^N$. In particular, by the continuity of $\mu$ we have that
$$ 
\mu(f)=\lim_{i\to\infty}\mu(f_i)=
\lim_{i\to\infty}\int_{[0,\infty)}\phi(t)\,dS_N(f_i;t).
$$
By Lemma \ref{l7}, a further consequence is that
$$
\lim_{i\to\infty}u_i(t)=u(t)\quad\mbox{for a.e. $t\in(0,\infty)$,}
$$
where
$$
u_i(t)=V_N(L_t(f_i)),\quad i\in\N,\quad
u(t)=V_N(L_t(f))
$$
for $t>0$. We consider now the sequence of measures $S_N(f_i;\cdot)$, $i\in\N$; the total variation of these measures 
in $(0,\infty)$ is uniformly bounded by $V_N(K)$, moreover they are all supported in $(0,M)$. As they are the distributional derivatives of the 
functions $u_i$, which converges a.e. to $u$, we have that (see for instance \cite[Proposition 3.13]{AFP}) the sequence 
$S_N(f_i;\cdot)$ converges weakly in the sense of measures to $S_N(f;\cdot)$. 
This implies that
\begin{equation}\label{proof of theorem 1 - 1}
\lim_{i\to\infty}\int_{(0,\infty)}\bar\phi(t)\,dS_N(f_i;t)=\int_{(0,\infty)}\bar\phi(t)\,dS_N(f;t)
\end{equation}
for every function $\bar\phi$ continuous in $(0,\infty)$, such that $\bar\phi(0)=0$ and $\bar\phi(t)$ is identically zero
for $t$ sufficiently large. In particular (recalling that $\phi(0)=0$), we can take $\bar\phi$ such that it equals $\phi$ in 
$[0,M]$. Hence, as the support of the measures $S_N(f_i;\cdot)$ is contained in this interval, we have that
\eqref{proof of theorem 1 - 1} holds for $\phi$ as well. This proves the validity of \eqref{claim theorem 1} for functions with bounded support.

\medskip

\noindent
{\bf Step 2.} This is the most technical part of the proof. The main scope here is to prove that $\phi$ is identically zero in some
right neighborhood of the origin. Let $f\in\C^N$. For $i\in\N$, let 
$$
f_i=f\wedge(M(f)I_{B_i})
$$
where $B_i$ is the closed ball centered at the origin, with radius $i$. The function $f_i$ coincides with $f$ in $B_i$ and vanishes in $\R^N\setminus B_i$;
in particular it has bounded support. Moreover, the sequence $f_i$, $i\in\N$, is increasing and converges point-wise to $f$ in $\R^N$. Hence
$$
\mu(f)=\lim_{i\to\infty}\mu(f_i)=\lim_{i\to\infty}\int_{(0,\infty)}\phi(t)\,dS_N(f_i;t).
$$
Let $\phi_+$ and $\phi_-$ be the positive and negative parts of $\phi$, respectively. We have that
$$
\lim_{i\to\infty}\left[\int_{(0,\infty)}\phi_+(t)\,dS_N(f_i;t)+\int_{(0,\infty)}\phi_-(t)\,dS_N(f_i;t)\right]
$$
exists and it is finite. We want to prove that this implies that $\phi_+$ and $\phi_-$ vanishes identically in $[0,\delta]$ for some 
$\delta>0$. 

By contradiction, assume that this is not true for $\phi_+$. Then there exists three sequences $t_i$, $r_i$ and $\epsilon_i$,
$i\in\N$, with the following properties: $t_i$ tends decreasing to zero; $r_i>0$ is such that the intervals $C_i=[t_i-r_i,t_i+r_i]$ are contained in
$(0,1]$ and pairwise disjoint; $\phi_+(t)\ge\epsilon_i>0$ for $t\in C_i$.  Let
$$
C=\bigcup_{i\in\N} C_i\,,\quad
\Omega=(0,1]\setminus C.
$$
Next we define a function $\gamma\,:\,(0,1]\to[0,\infty)$ as follows. $\gamma(t)=0$ for every $t\in\Omega$ while, for every $i\in\N$, $\gamma$ 
is continuous in $C_i$ and
$$
\gamma(t_i\pm r_i)=0,\quad
\int_{C_i}\gamma(t)dt=\frac1\epsilon_i.
$$
Note in particular that $\gamma$ vanishes on the support of $\phi_-$ intersected with $(0,1]$. We also set
$$
g(t)=\gamma(t)+1\quad\forall\, t>0.
$$
Observe that
$$
\int_0^1 \phi_-(t)g(t)dt=\int_0^1\phi_-(t)dt<\infty.
$$
On the other hand
\begin{eqnarray*}
\int_0^1\phi_+(t)g(t)dt&\ge&
\int_0^1\phi(t)\gamma(t)dt=
\sum_{i=1}^\infty\int_{C_i}\phi_+(t)\gamma(t)dt\\
&\ge&\sum_{i=1}^\infty\epsilon_i\int_{C_i}\gamma(t)dt=+\infty.
\end{eqnarray*}

Let
$$
G(t)=\int_t^1 g(s)ds\quad\mbox{and}\quad
\rho(t)=[G(t)]^{1/N},\quad 0<t\le 1.
$$
As $\gamma$ is non-negative, $g$ is strictly positive, and continuous in $(0,1)$. Hence $G$ is 
strictly decreasing and continuous, and the same holds for $\rho$. Let 
$$
S=\sup_{(0,1]}\rho=\lim_{t\to 0^+}\rho(t),
$$ 
and let $\rho^{-1}\,:\,[0,S)\to\R$ be the inverse function of $\rho$. If $S<\infty$, we extend $\rho^{-1}$ to be zero in
$[S,\infty)$. In this way, $\rho^{-1}$ is continuous in $[0,\infty)$, and $C^1([0,S))$. Let
$$
f(x)=\rho^{-1}(\|x\|),\quad\forall\, x\in\R^N.
$$
For $t>0$ we have
$$
L_t(f)=
\left\{
\begin{array}{lll}
\{x\in\R^N\,:\,\|x\|\le\rho(t)\}\quad&\mbox{if $t\le1$,}\\
\varnothing\quad&\mbox{if $t>1$.}
\end{array}
\right.
$$
In particular $f\in\C^N$. Consequently,
$$
V_N(L_t(f))=c\,\rho^N(t)=c\,G(t)\quad\forall\,t\in(0,1],
$$
where $c>0$ is a dimensional constant, and then
$$
dS_N(f;t)=c\,g(t)dt.
$$
By the previous considerations
\begin{eqnarray*}
\int_{[0,\infty)}\phi_+(t)dS_N(f,t)=c \int_{[0,\infty)}\phi_+(t)g(t)dt=\infty,\quad
\int_{[0,\infty)}\phi_+(t)dS_N(f,t)<\infty.
\end{eqnarray*}
Clearly we also have that
$$
\int_{[0,\infty)}\phi_+(t)dS_N(f,t)=
\lim_{i\to\infty}\int_{[0,\infty)}\phi_+(t)dS_N(f_i,t),
$$
and the same holds for $\phi_-$; here $f_i$ is the sequence approximating $f$ defined before. We reached a contradiction.

\medskip

\noindent
{\bf Step 3.} The conclusion of the proof proceeds as follows. Let $\bar\mu\,:\,\C^N\to\R$ be defined by
$$
\bar\mu(f)=\int_{(0,\infty)}\phi(t)\,dS_N(f;t).
$$
By the previous step, and by the results of section \ref{sec. Continuous and integral valuations}, this is well defined, and is an invariant and 
continuous valuation. Hence the same properties are shared by $\mu-\bar\mu$; on the other hand, by Step 1 and the definition of $\bar\mu$, 
this vanishes on functions with bounded support. As for any element $f$ of $\C^N$ there is a monotone sequence of functions in $C^N$, with bounded 
support and converging point-wise to $f$ in $\R^N$, and as $\mu-\bar\mu$ is continuous, it must be identically zero on $\C^N$.
\begin{flushright}
$\square$
\end{flushright}

\section{Proof of Theorem \ref{characterization}}\label{section 7}

As for the proof of Theorem \ref{characterization simple}, note that one implication of Theorem \ref{characterization} is already proved, by an application of
Proposition \ref{proposition integral valuations 1} (and its extension to the case $k=0$).

For the other implication 
we proceed by induction on $N$. For the first step of induction, let $\mu$ be an invariant and continuous valuation on $\C^1$. For $t>0$
let 
$$
\phi_0(t)=\mu(t I_{\{0\}}).
$$
This is a continuous function in $\R$, with $\phi_0(0)=0$. We consider the application $\mu_0\,:\,\C^1\to\R$:
$$
\mu_0(f)=\phi_0(M(f))
$$
where as usual $M(f)=\max_{\R}f$. By what we have seen in section \ref{sec. Continuous and integral valuations}, this is 
an invariant and continuous valuation. Note that it can be written in the form
$$
\mu_0(f)=\int_{(0,\infty)}\phi_0(t)\,dS_0(f;t).
$$
Next we set $\bar\mu=\mu-\mu_0$; this is still an invariant and continuous valuation, and it is also simple. Indeed, if $f\in\C^1$ is such that
$\dim(\supp(f))=0$, this is equivalent to say that
$$
f=tI_{\{x_0\}}
$$
for some $t\ge0$ and $x_0\in\R$.  Hence
$$
\mu(f)=\mu(tI_{\{0\}})=\phi_0(t)=\mu_0(f).
$$
Therefore we may apply Theorem \ref{characterization simple} to $\mu_1$ and deduce that there exists a function $\phi_1\in C([0,\infty))$,
which vanishes identically in $[0,\delta]$ for some $\delta>0$, and such that
$$
\bar\mu(f)=\int_{(0,\infty)}\phi_1(t)\,dS_1(f;t)\quad\forall\, f\in\C^1.
$$
The proof in the one-dimensional case is complete.

\medskip

We suppose that the Theorem holds up to dimension $(N-1)$. Let $H$ be an hyperplane of $\mathbb{R}^N$ and define 
$\C^N_H=\{f\in{\mathcal C}^N:\ \textnormal{supp}(f)\subseteq H\}$. $\C^N_H$ can be identified with ${\mathcal C}^{N-1}$; 
moreover $\mu$ restricted to $\C^N_H$ is trivially still an invariant and continuous valuation. By the induction assumption, 
there exists $\phi_k\in C([0,\infty))$, $k=0,\dots,N-1$, such that
$$
\mu(f)=\sum_{k=0}^{N-1}\int_{(0,\infty)}\phi_k(t)\,dS_{k}(f;t)\quad\forall\, f\in\C^N_H.
$$
In addition, there exists $\delta>0$ such that $\phi_1,\dots,\phi_{N-1}$ vanish in $[0,\delta]$. Let 
$\bar{\mu}:{\mathcal C}^N\rightarrow \mathbb{R}$ as
$$
\bar{\mu}(f)=\sum_{k=0}^{N-1}\int_{(0,\infty)}
\phi_k(t)\,dS_k(f;t).
$$
This is well defined for $f\in\C^N$ and it is an invariant and continuous valuation. The difference $\mu-\bar\mu$ is simple;
applying Theorem \ref{characterization simple} to it, as in the one-dimensional case, we complete the proof.
\begin{flushright}
$\square$
\end{flushright}

\section{Monotone valuations}\label{section 8}

In this section we will prove Theorem \ref{characterization monotone}. By Proposition \ref{integral valuations 1}, every map of the form 
\eqref{intro0.3} has the required properties. 

To prove the opposite implication, we will assume that $\mu$ is an invariant, continuous and 
increasing valuation on $\C^N$ throughout. Note that, as $\mu(f_0)=0$, where $f_0$ is the function identically zero in $\R^N$, we have that
$\mu(f)\ge0$ for every $f\in\C^N$.

The proof is divided into three parts.

\subsection{Identification of the measures $\nu_k$, $k=0,\dots,N$.} 

We proceed as in the proof of Proposition \ref{fi N}. 
Fix $t>0$ and consider the application $\sigma_t\,:\,\K^N\to\R$:
$$
\sigma_t(K)=\mu(tI_K),\quad K\in\K^N.
$$
This is a rigid motion invariant valuation on $\K^N$ and, as $\mu$ is increasing, $\sigma_t$ has the same property. Hence there exist 
$(N+1)$ coefficients, depending on $t$, that we denote by $\psi_k(t)$, $k=0,\dots,N$, such that
\begin{equation}\label{somma monotone}
\sigma_t(K)=\sum_{k=0}^N\psi_k(t)V_k(K)\quad\forall\, K\in\K^N.
\end{equation}
We prove that each $\psi_k$ is continuous and monotone in $(0,\infty)$. Let us fix the index $k\in\{0,\dots,N\}$, and let $\Delta_k$
be a closed $k$-dimensional ball in $\R^N$, of radius 1. We have
$$
V_j(\Delta_k)=0\quad\forall\, j=k+1,\dots,N,
$$
and
$$
V_{k}(\Delta_k)=:c(k)>0.
$$
Fix $r\ge0$; for every $j$, $V_j$ is positively homogeneous of order $j$, hence, 
for $t>0$,
$$
\mu(tI_{r\Delta_k})=\sum_{j=0}^k r^{j}V_j(\Delta_k)\psi_j(t).
$$
Consequently
$$
\psi_k(t)=V_k(\Delta_k)\cdot\lim_{r\to \infty}\frac{\mu(tI_{r\Delta_k})}{r^{k}}.
$$
By the properties of $\mu$, the function $t\to\mu(tI_{r\Delta_k})$ is non-negative, increasing and vanishes for $t=0$, for every $r\ge0$; these properties
are inherited by $\psi_k$.

\medskip

As for continuity, we proceed in a similar way. To prove that $\psi_0$ is continuous we observe that the function
$$
t\,\rightarrow\,\mu(t\Delta_0)=\psi_0(t)
$$
is continuous, by the continuity of $\mu$. Assume that we have proved that $\psi_0,\dots,\psi_{k-1}$ are continuous. Then by the equality
$$
\mu(tI_{\Delta_k})=\sum_{j=1}^k V_j(\Delta_k)\psi_j(t),
$$ 
it follows that $\psi_k$ is continuous.

\begin{proposition} Let $\mu$ be an invariant, continuous and increasing valuation on $\C^N$. Then there exists $(N+1)$ functions 
$\psi_0,\dots,\psi_N$ defined in $[0,\infty)$, such that \eqref{somma monotone} holds for every $t\ge0$ and for every $K$. In particular
each $\psi_k$ is continuous, increasing, and vanishes at $t=0$. 
\end{proposition}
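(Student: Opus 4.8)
The plan is to fix $t>0$, transfer the problem to convex bodies via the map $K\mapsto\mu(tI_K)$, and then invoke the monotone version of Hadwiger's theorem. Precisely, for each $t>0$ I would define $\sigma_t\colon\K^N\cup\{\varnothing\}\to\R$ by $\sigma_t(K)=\mu(tI_K)$ and $\sigma_t(\varnothing)=0$. Since $tI_K\vee tI_L=tI_{K\cup L}$ and $tI_K\wedge tI_L=tI_{K\cap L}$, whenever $K\cup L\in\K^N$ both functions lie in $\C^N$, and the valuation identity for $\mu$ yields $\sigma_t(K\cup L)+\sigma_t(K\cap L)=\sigma_t(K)+\sigma_t(L)$; thus $\sigma_t$ is a valuation on $\K^N$. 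It is rigid motion invariant because $\mu$ is, and it is monotone increasing because $K\subseteq L$ implies $tI_K\le tI_L$ pointwise and $\mu$ is increasing. Applying Theorem \ref{t3} to $\sigma_t$ produces nonnegative coefficients $\psi_0(t),\dots,\psi_N(t)$ with $\sigma_t(K)=\sum_{k=0}^N\psi_k(t)V_k(K)$ for all $K$; these are uniquely determined since $V_0,\dots,V_N$ are linearly independent. Setting $\psi_k(0)=0$ extends the identity to $t=0$ as well, because $0\cdot I_K=\underline{0}$ and $\mu(\underline{0})=0$.

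Next I would show each $\psi_k$ is nonnegative, increasing and vanishes at the origin, by isolating $\psi_k(t)$ through a family of test bodies. Let $\Delta_k$ be a unit $k$-dimensional ball in $\R^N$, so that $V_j(\Delta_k)=0$ for $j>k$ while $c(k):=V_k(\Delta_k)>0$. For $r\ge0$, positive homogeneity of the intrinsic volumes gives $V_j(r\Delta_k)=r^jV_j(\Delta_k)$, hence $\mu(tI_{r\Delta_k})=\sum_{j=0}^k r^jV_j(\Delta_k)\psi_j(t)$ is a polynomial in $r$ of degree at most $k$ with leading coefficient $c(k)\psi_k(t)$; consequently $\psi_k(t)=\frac{1}{c(k)}\lim_{r\to\infty}r^{-k}\mu(tI_{r\Delta_k})$. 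For fixed $r$, the function $t\mapsto\mu(tI_{r\Delta_k})$ is nonnegative (since $\mu\ge0$), vanishes at $t=0$ (since $0\cdot I_{r\Delta_k}=\underline{0}$), and is increasing in $t$ (since $s\le t$ forces $sI_{r\Delta_k}\le tI_{r\Delta_k}$); all three properties pass to the pointwise limit in $r$, so $\psi_k$ enjoys them too.

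Finally I would prove continuity of the $\psi_k$ by induction on $k$. The base case is $\psi_0(t)=\mu(tI_{\Delta_0})$, where $\Delta_0$ is a point and $V_0(\Delta_0)=1$; for the inductive step the relation $\mu(tI_{\Delta_k})=\sum_{j=0}^k V_j(\Delta_k)\psi_j(t)$ writes $\psi_k$ as $\frac{1}{c(k)}$ times $\mu(tI_{\Delta_k})$ minus a linear combination of $\psi_0,\dots,\psi_{k-1}$, so it suffices that $t\mapsto\mu(tI_{\Delta_k})$ is continuous. This function is monotone increasing, hence has one-sided limits everywhere; and along any monotone sequence $t_i\to t_0$ the functions $t_iI_{\Delta_k}$ converge monotonically and pointwise to $t_0I_{\Delta_k}$ in $\C^N$, so the continuity hypothesis on $\mu$ forces $\mu(t_iI_{\Delta_k})\to\mu(t_0I_{\Delta_k})$, pinning both one-sided limits to the value at $t_0$. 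Thus $t\mapsto\mu(tI_{\Delta_k})$ is continuous and the induction closes.

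The valuation, invariance and monotonicity checks for $\sigma_t$ and the polynomial-in-$r$ bookkeeping are routine; the one point needing care is the last step, since the hypothesis only grants continuity of $\mu$ along \emph{monotone} pointwise-convergent sequences, and one must use that the relevant functions are already monotone — which is precisely what makes one-sided limits available and reduces genuine continuity to the monotone-sequence statement.
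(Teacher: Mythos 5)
Your proposal is correct and follows essentially the same route as the paper: define $\sigma_t(K)=\mu(tI_K)$, apply the monotone version of Hadwiger's theorem to get the coefficients $\psi_k(t)$, isolate $\psi_k$ via dilates of a $k$-dimensional ball to transfer nonnegativity, monotonicity and vanishing at $0$, and prove continuity by induction on $k$ using the monotone-sequence continuity of $\mu$. Your closing remark on why monotone-sequence continuity suffices (one-sided limits exist by monotonicity and are pinned down along monotone sequences) is a point the paper leaves implicit, and your normalization $\psi_k(t)=\frac{1}{c(k)}\lim_{r\to\infty}r^{-k}\mu(tI_{r\Delta_k})$ is the correct form of a formula the paper states with a slip.
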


For every $k\in\{0,\dots,N\}$ we denote by $\nu_k$ the distributional derivative of $\psi_k$. In particular
as $\psi_k$ is continuous, $\nu_k$ is non-atomic and
$$
\psi_k(t)=\nu_k([0,t)),\quad\forall\, t\ge0.
$$

\subsection{The case of simple functions}

Let $f$ be a simple function:
$$
f=t_1 I_{K_1}\vee\dots\vee t_mI_{K_m}
$$
with $0<t_1<\dots<t_m$, $K_1\supset\dots\supset K_m$ and $K_i\in\K^N$ for every $i$. The following formula can be proved with the same 
method used for \eqref{induction}
\begin{equation}\label{induction2}
\mu(f)=
\sum_{k=0}^N
\sum_{i=1}^m
(\psi_k(t_i)-\psi_k(t_{i-1}))V_k(L_{t_i}(f)),
\end{equation}
where we have set $t_0=0$. As
$$
\psi_k(t_i)-\psi_k(t_{i-1})=\nu_k((t_{i-1},t_i])
$$
and $L_t(f)=K_i$ for every $t\in(t_{i-1}, t_i]$, we have
\begin{equation}\label{simple functions}
\mu(f)=
\sum_{k=0}^N
\int_{[0,\infty)}V_k(L_t(f))\,d\nu_k(t).
\end{equation}
In other words, we have proved the theorem for simple functions.

\subsection{Proof of Theorem \ref{characterization monotone}}

Let $f\in\C^n$ and let $f_i$, $i\in\N$, be the sequence of functions built in the proof of Theorem \ref{characterization simple}, Step 2. We have seen that
$f_i$ is increasing and converges point-wise to $f$ in $\R^N$. In particular, for every $k=0,\dots,N$, the sequence of functions 
$V_k(L_t(f_i))$, $t\ge0$, $i\in\N$, is monotone increasing and it converges a.e. to $V_k(L_t(f))$ in $[0,\infty)$. By the B. Levi theorem, we have that
$$
\lim_{i\to\infty}\int_{[0,\infty)}V_k(L_t(f_i))\,d\nu_k(t)=
\int_{[0,\infty)}V_k(L_t(f))\,d\nu_k(t)
$$
for every $k$. Using \eqref{simple functions} and the continuity of $\mu$ we have that the representation formula \eqref{simple functions} can be extended
to every $f\in\C^N$. 

Note that in \eqref{simple function} each term of the sum in the right hand-side is non-negative, hence we have that
$$
\int_{[0,\infty)}V_k(L_t(f))\,d\nu_k(t)<\infty\quad\forall\,f\in\C^N.
$$
Applying Proposition \ref{p4} we obtain that, if $k\ge1$, there exists $\delta>0$ such that the support of $\nu_k$ is contained in
$[\delta,\infty)$. The proof is complete.
\begin{flushright}
$\square$
\end{flushright}

\end{document}